\newtheorem{theorem}{Theorem}[section]
\newtheorem{lemma}[theorem]{Lemma}
\theoremstyle{definition}
\newtheorem{definition}[theorem]{Definition}
\theoremstyle{remark}
\newtheorem{remark}[theorem]{Remark}
\DeclarePairedDelimiter\ceil{\lceil}{\rceil}
\begin{document}
	
	\title[$N$-Body Dielectric Spheres Problem. Part II.]{An Integral Equation Formulation of the $N$-Body Dielectric Spheres Problem. Part II: Complexity Analysis}
	
	\author{$\text{B\'erenger Bramas}^*$}
	\address[*]{CAMUS Team, Inria Nancy - Grand Est, ICube - Laboratoire des sciences de l'ing\'enieur, de l'informatique et de l'imagerie.}
	\email{berenger.bramas@inria.fr}
	
	\author{$\text{Muhammad Hassan}^{\dagger}$}
	\address[$\dagger$]{Center for Computational Engineering Science, Department of Mathematics, RWTH Aachen University, Germany}
	\email{hassan@mathcces.rwth-aachen.de}
	
	\author{$\text{Benjamin Stamm}^{\dagger}$}
	\email{stamm@mathcces.rwth-aachen.de}
	
	\date{\today}
	
	\subjclass{65N12, 65N15, 65N35,  65R20}
	\keywords{Boundary Integral Equations, Complexity Analysis, Linear Scaling, $N$-body Problem, Polarisation}
	
	\maketitle

	\begin{abstract}
		This article is the second in a series of two papers concerning the mathematical study of a boundary integral equation of the second kind that describes the interaction of $N$ dielectric spherical particles undergoing mutual polarisation. The first article presented the numerical analysis of the Galerkin method used to solve this boundary integral equation and derived $N$-independent convergence rates for the induced surface charges and total electrostatic energy. The current article will focus on computational aspects of the algorithm. We provide a convergence analysis of the iterative method used to solve the underlying linear system and show that the number of liner solver iterations required to obtain a solution is independent of $N$. Additionally, we present two linear scaling solution strategies for the computation of the approximate induced surface charges. Finally, we consider a series of numerical experiments designed to validate our theoretical results and explore the dependence of the numerical errors and computational cost of solving the underlying linear system on different system parameters. 
	\end{abstract}
	
	\section{Introduction}
	
	$N$-body problems are ubiquitous in a wide variety of physical fields including quantum mechanics, molecular dynamics, astrophysics, and electrostatics. In the field of chemical physics, $N$-body problems arise naturally when one considers the interaction of a large number of charged particles (see, e.g., \cite{dobnikar2002many,Colloid, Extra2, Crystal, jurrus2018improvements, Extra1, merrill2009many,Titan, Self, Lattice, yap2013calculating, barros2014dielectric}). If the particles are composed of a polarisable dielectric material, then a full description of the electrostatic interaction can typically not be obtained as simply the sum of pairwise Coulomb-type interactions. Consequently, more elaborate numerical methods based either on so-called image charge methods (see, e.g.,\cite{image1, image3, image2, qin2016image}) or multipole expansion approaches (see, e.g., \cite{Extra2, multipole2, lotan2006}) have been developed (see also the BEM-based approach in \cite{barros2014efficient} and the expansion-based approach of \cite{freed2014perturbative}). Unfortunately, these numerical methods may become prohibitive in terms of computation time for a large number of particles and furthermore, are often formulated in a manner which makes them unsuited for a systematic numerical analysis. The lack of a numerical analysis in turn means that one cannot evaluate \emph{theoretically} the accuracy of these methods, and in particular one cannot explore the dependence of the accuracy on the number of dielectric particles $N$.
	
	The quality of an $N$-body numerical method can be assessed by considering how the accuracy and computational cost of the algorithm scale with $N$. To fix terminology
	
	\begin{itemize}
		\item We say that an $N$-body numerical method is \emph{$N$-error stable} if, for a fixed number of degrees of freedom per object and assuming other properties such as the minimum inter-sphere separation are kept constant,  the {relative} error in the approximate solution does not increase with $N$. Establishing that a numerical method is $N$-error stable requires a rigorous numerical analysis of the algorithm in question, which immediately rules out many of the existing algorithms cited above. 
		
		\item We say that an $N$-body numerical method is \emph{linear scaling in cost} if, for a fixed number of degrees of freedom per object and assuming other properties such as the minimum inter-sphere separation are kept constant,  the numerical method requires $\mathcal{O}(N)$ operations to compute an approximate solution with a given and fixed tolerance. Typically, linear scaling in cost requires the use of fast summation methods such as tree codes (see, e.g., \cite{appel1985efficient,barnes1986hierarchical,li2009cartesian, dehnen2000very, boateng2013comparison, geng2013treecode}) including the so-called Fast Multipole method (see, e.g., \cite{cheng1999fast, greengard1, greengard2, greengard1990numerical}, or particle-mesh and P3M methods (see, e.g., \cite{knebe2001multi, efstathiou1985numerical, hockney}).  If the $N$-body numerical method also involves solving a linear system, then one would additionally have to show that the number of solver iterations required to obtain an approximate solution does not grow with $N$.
		
		\item Finally, we say that an $N$-body numerical method is \emph{linear scaling in accuracy} if it is both $N$-error stable and linear scaling in cost. Linear scaling in accuracy methods can be viewed as the gold-standard for $N$-body problems since these methods require only $\mathcal{O}(N)$ operations to compute an approximate solution with a given average error (the total error scaled by $N$) or relative error.
	\end{itemize}
	
	E. Lindgren and coworkers recently proposed in \cite{lindgren2018}, a computational method based on a Galerkin discretisation of a second-kind boundary integral equation that describes the induced surface charges resulting from the interaction of a large number of dielectric spheres embedded in a homogenous polarisable medium and undergoing mutual polarisation. We emphasise three particular features of this proposed method: 
	
	\begin{enumerate}
		\item[O1)] Numerical experiments suggest that the method is indeed $N$-error stable.
		
		\item[O2)] Numerical experiments indicate that the number of linear solver iterations required to solve the underlying linear system is independent of the number $N$ of dielectric spheres. Since the FMM allows the solution matrix to be multiplied with arbitrary vectors using $\mathcal{O}(N)$ operations, the numerical method also seems to be linear scaling in cost.
		
		\item[O3)] The algorithm is based on a Galerkin discretisation of a second kind boundary integral equation so it is particularly suited for rigorous numerical analysis. 
	\end{enumerate}
	
	O1) and O2) taken together suggest that the proposed Galerkin method is \emph{linear scaling in accuracy}. Importantly however-- and in contrast to several of the existing $N$-body algorithms cited above-- O3) suggests that it might actually be possible to rigorously prove this result. The first article \cite{Hassan} in this series of two papers introduced a new analysis of second kind boundary integral equations posed on spherical domains and presented a detailed error analysis of this method. In particular, the article derived convergence rates for the induced surface charge and total electrostatic energy that did not explicitly depend on the number $N$ of dielectric spherical particles in the system. Consequently, under suitable geometrical assumptions, it was rigorously demonstrated that the numerical method was indeed $N$-error stable.
	
	The goal of the current article is to present a detailed complexity analysis of the Galerkin method proposed by Lindgren and coworkers with the goal of showing that the method is linear scaling in cost. Our main result shows that-- under suitable geometrical assumptions-- the number of linear solver iterations required to solve the linear system obtained from the Galerkin discretisation up to a given tolerance is independent of $N$. Since the FMM allows us to compute approximate matrix-vector products involving the solution matrix using $\mathcal{O}(N)$ operations, it follows that an approximate solution with a given and fixed relative error can indeed be constructed by means of an iterative method using only $\mathcal{O}(N)$ operations. In other words, the numerical method is linear scaling in cost. Combined with the analysis presented in \cite{Hassan} which establishes $N$-error stability, this result confirms that the Galerkin method proposed by E. Lindgren and coworkers is \emph{linear scaling in accuracy}. 
	
	The remainder of this article is organised as follows. In Section \ref{sec:2}, we introduce notation, describe the problem setting and the governing boundary integral equation, and restate key results from our first paper \cite{Hassan}. The complexity analysis, main results and proposed solution strategies are presented in Section \ref{sec:3}. In Section \ref{sec:4}, we present numerical results that support the analysis of Section \ref{sec:3}, and finally in Section \ref{sec:5}, we state our conclusion and discuss possible extensions.
	
	\section{Problem Setting and Previous Results}\label{sec:2}
	Throughout this article, we will use standard results and notation from the theory of boundary integral equations. The setting and notations stated here are essentially identical to those introduced in the first article \cite{Hassan} and are taken primarily from the book of Sauter and Schwab on boundary element methods \cite{Schwab}.
	
	\vspace{2mm}
	
	\subsection{Setting and Notation}\label{sec:2a}~ \vspace{2mm}
	
	{Although we are formally interested in studying all geometrical configurations that are the unions of an arbitrary number $N$ of non-intersecting open balls with varying radii in three dimensions, as pointed out in the first contribution \cite{Hassan}, our claim of $N$-independent bounds requires us to impose certain technical assumptions on the types of geometries we consider. To this end, we denote by $\mathcal{I}$ a countable indexing set, and we consider a so-called family of geometries~$\{\Omega_{\mathcal{F}}\}_{\mathcal{F} \in \mathcal{I}}$. Each element $\Omega_{\mathcal{F}} \subset \mathbb{R}^3$ in this family is the (set) union of a fixed number of non-intersecting open balls of varying locations and radii with associated dielectric constants, and therefore represents a particular physical geometric situation. Consequently, each element $\Omega_{\mathcal{F}}$ of this family of geometries is uniquely characterised by the following four parameters:
		\begin{itemize}
			\item A non-zero number $N_{\mathcal{F}} \in \mathbb{N}$, which represents the total number of dielectric spherical particles that compose the geometry $\Omega_{\mathcal{F}}$;
			\item A collection of points $\{\bold{x}^{\mathcal{F}}_i\}_{i=1}^{N_{\mathcal{F}}} \in \mathbb{R}^3$, which represent the centres of the spherical particles composing the geometry $\Omega_{\mathcal{F}}$;
			\item A collection of positive real numbers $\{r_i^{\mathcal{F}}\}_{i=1}^{N_{\mathcal{F}}} \in \mathbb{R}$, which represent the radii of the spherical particles composing the geometry $\Omega_{\mathcal{F}}$;
			\item A collection of positive real numbers $\{\kappa^{\mathcal{F}}_i\}_{i=0}^{N_{\mathcal{F}}}$. Here, $\kappa^{\mathcal{F}}_0$ denotes the dielectric constant of the external medium while $\{\kappa^{\mathcal{F}}_i\}_{i=1}^{N_{\mathcal{F}}}$ represent the dielectric constants of each dielectric sphere.
		\end{itemize}
		Indeed, using the first three parameters we can define the open balls $\Omega^\mathcal{F}_i:= \mathcal{B}_{r_i}(\bold{x}_i) \subset \mathbb{R}^3$, $i \in \{1, \ldots, N_{\mathcal{F}}\}$ which represent the spherical dielectric particles composing the geometry $\Omega_{\mathcal{F}}$, i.e., $\Omega_{\mathcal{F}}= \cup_{i=1}^{N_\mathcal{F}} \Omega_i^{\mathcal{F}}$. Moreover, the fourth parameter $\{\kappa^{\mathcal{F}}_i\}_{i=0}^N$ denotes the dielectric constants associated with this geometry. Following \cite{Hassan}, we now impose the following assumptions on the above parameters: \vspace{2mm}
		
		\begin{enumerate}
			
			\item[\textbf{A1:}] \textbf{[Uniformly bounded radii]} There exist constants $r^{\infty}_->0$ and $r^{\infty}_+>0$ such that 
			\begin{align*}
				\inf_{\mathcal{F} \in \mathcal{I}}\, \min_{i=1, \ldots, N_{\mathcal{F}}} r^{\mathcal{F}}_i > r^{\infty}_- \quad \text{and} \quad \sup_{\mathcal{F} \in \mathcal{I}}\, \max_{i=1, \ldots, N_{\mathcal{F}}} r^{\mathcal{F}}_i < r^{\infty}_+.
			\end{align*}
			
			\item[\textbf{A2:}] \textbf{[Uniformly bounded minimal separation]} There exists a constant $\epsilon^{\infty} > 0$ such that 
			\begin{align*}
				\inf_{\mathcal{F} \in \mathcal{I}}\, \min_{\substack{i, j=1, \ldots, N_{\mathcal{F} } \\ i \neq j}} \big(\vert \bold{x}_i^{\mathcal{F}} -\bold{x}_j^{\mathcal{F}}\vert - r^{\mathcal{F}}_i -r^{\mathcal{F}}_j\big)> \epsilon^{\infty}.
			\end{align*}
			
			\item[\textbf{A3:}] \textbf{[Uniformly bounded dielectric constants]} There exist constants $\kappa^{\infty}_->0$ and $\kappa^{\infty}_+>0$ such that 
			\begin{align*}
				\inf_{\mathcal{F} \in \mathcal{I}} \,\min_{i=1, \ldots, N_{\mathcal{F}}} \kappa_{i}^{\mathcal{F}}> \kappa^{\infty}_- \quad \text{and} \quad \sup_{\mathcal{F} \in \mathcal{I}} \, \max_{i=1, \ldots, N_{\mathcal{F}}} \kappa_{i}^{\mathcal{F}} < \kappa^{\infty}_+.
			\end{align*}
		\end{enumerate}
		
		In other words we assume that the family of geometries $\{\Omega_{\mathcal{F}}\}_{\mathcal{F} \in \mathcal{I}}$ we consider in this article describe physical situations where the radii of the dielectric spherical particles, the minimum inter-sphere separation distance and the dielectric constants are all uniformly bounded. These assumptions are required because the subsequent bounds we will derive, while explicitly independent of the number of dielectric particles $N_\mathcal{F}$, do depend on other geometrical parameters, and we would thus like to avoid situations where these geometric parameters degrade with increasing $N_\mathcal{F}$. 

		In the remainder of this article, we will consider a fixed geometry from the family of geometries $\{\Omega_{\mathcal{F}}\}_{\mathcal{F} \in \mathcal{I}}$ satisfying the assumptions \textbf{A1)-A3)}. To avoid bulky notation we will drop the superscript and subscript $\mathcal{F}$ and denote this geometry by $\Omega^-$. The geometry is constructed as follows: Let $N \in \mathbb{N}$, let $\{\bold{x}_i\}_{i=1}^N \in \mathbb{R}^3$ be a collection of points in $\mathbb{R}^3$ and let $\{r_i\}_{i=1}^N \in \mathbb{R}$ be a collection of positive real numbers, and for each $i \in \{1, \ldots, N\}$ let $\Omega_i := \mathcal{B}_{r_i}(\bold{x}_i) \subset \mathbb{R}^3$ be the open ball of radius $r_i >0$ centred at the point $\bold{x}_i$. Then $\Omega^- \subset \mathbb{R}^3$ is defined as $\Omega^-:= \cup_{i=1}^N \Omega_i$. Furthermore, we define $\Omega^+:= \mathbb{R}^3 \setminus \overline{\Omega^-}$, and we write $\partial \Omega$ for the boundary of $\Omega^-$ and {$\eta(\bold{x})$ for the unit normal vector at $\bold{x} \in \partial \Omega$ pointing towards the exterior of $\Omega^-$}. Moreover, we denote by $\{\kappa_i\}_{i=1}^N \in \mathbb{R}_+$ the dielectric constants of all spherical particles $\{\Omega_i\}_{i=1}^N$ and by $\kappa_0 \in \mathbb{R}_+$ the dielectric constant of the background medium. For clarity of exposition, we also define the dielectric function $\kappa \colon \partial \Omega \rightarrow \mathbb{R}$ as $\kappa (\bold{x}):= \kappa_i ~\text{ for } \bold{x} \in \partial \Omega_i.$ Notice that by definition for each $i \in \{1, \ldots, N\}$, either $\frac{\kappa-\kappa_0}{\kappa_0}\vert_{\partial \Omega_i} \geq 0$ or $\frac{\kappa-\kappa_0}{\kappa_0}\vert_{\partial \Omega_i} \in (-1, 0]$.} \vspace{2mm}

	Following standard practice, we define the Sobolev space $H^1(\Omega^-):= \left\{u \in L^2(\Omega^-) \colon \nabla u \in L^2(\Omega^-)\right\}$ with the norm $\Vert u \Vert^2_{H^1(\Omega^-)} := \sum_{i=1}^N\Vert u \Vert^2_{L^2(\Omega_i)} + \Vert \nabla u \Vert^2_{L^2(\Omega_i)}$. {Next, we define $C^{\infty}_{\rm comp}(\Omega^+):= \left\{u \vert_{\Omega^+} \colon u \in C_0^{\infty}(\mathbb{R}^3)\right\}$ where $C_0^{\infty}(\mathbb{R}^3)$ denotes the set of infinitely smooth functions with compact support in $\mathbb{R}^3$, and we define the weighted Sobolev space $H^1(\Omega^+)$ as the completion of $C^{\infty}_{\text{comp}}(\Omega^+)$ with respect to the norm $\Vert u \Vert^2_{H^1(\Omega^+)}:= \int_{\Omega^+} \frac{\vert u(\bold{x})\vert^2}{1 + \vert \bold{x}\vert^2}\, d\bold{x}+ \int_{\Omega^+} {\vert \nabla u(\bold{x})\vert^2}\, d\bold{x}$. Functions that satisfy the decay condition associated with exterior Laplace problems will belong to this space\footnote{{The space $H^1(\Omega^+)$ that we have defined here corresponds to the space $H^1(-\Delta, \Omega^+)$  in \cite[Section 2.9.2.4]{Schwab}}. }. In addition, we denote by $H^{\frac{1}{2}}(\partial \Omega)$ the Sobolev space of order $\frac{1}{2}$ equipped with the Sobolev-Slobodeckij norm $\Vert \lambda \Vert^2_{H^{\frac{1}{2}}(\partial \Omega)}:=\sum_{i=1}^N \Vert \lambda\Vert^2_{L^2(\partial \Omega_i)} + \int_{\partial \Omega_i} \int_{\partial \Omega_i} \frac{\vert\lambda(\bold{x})-\lambda(\bold{y})\vert^2}{\vert \bold{x} - \bold{y}\vert^3 } \, ds_{\bold{x}} ds_{\bold{y}}.$ Moreover, we define $H^{-\frac{1}{2}}(\partial \Omega):=\left(H^{\frac{1}{2}}(\partial \Omega)\right)^*$ and we equip this Sobolev space with the canonical dual norm 
		\begin{align*}
			\Vert \sigma \Vert_{H^{-\frac{1}{2}}(\partial \Omega)}:= \sup_{0 \neq \psi \in {H}^{\frac{1}{2}}(\partial \Omega)} \frac{\langle \sigma, \psi \rangle_{H^{-\frac{1}{2}}(\partial \Omega) \times H^{\frac{1}{2}}(\partial \Omega)}}{\Vert \psi \Vert_{H^{\frac{1}{2}} (\partial \Omega)}} \qquad \forall \sigma \in H^{-\frac{1}{2}}(\partial \Omega),
		\end{align*}
		where $\langle \cdot, \cdot \rangle_{H^{-\frac{1}{2}}(\partial \Omega) \times H^{\frac{1}{2}}(\partial \Omega)}$ denotes the duality pairing between $H^{-\frac{1}{2}}(\partial \Omega)$ and $H^{\frac{1}{2}}(\partial \Omega)$}. \vspace{2mm}
	
	We introduce $\gamma^- \colon H^1(\Omega^{-}) \rightarrow H^{\frac{1}{2}}(\partial \Omega)$ and $\gamma^+ \colon H^1(\Omega^+) \rightarrow H^{\frac{1}{2}}(\partial \Omega)$ as the continuous, linear and surjective interior and exterior Dirichlet trace operators respectively (see, e.g., \cite[Theorem 2.6.8, Theorem 2.6.11]{Schwab}). Moreover, for $s \in \{+, -\}$, we define the closed subspace $\mathbb{H}(\Omega^s):= 
	\{u \in H^{1}(\Omega^s) \colon \Delta u =0 \text{ in } \Omega^s\},$
	and we write $\gamma^-_N \colon \mathbb{H}(\Omega^-) \rightarrow H^{-\frac{1}{2}}(\partial \Omega)$ and $\gamma^+_N \colon \mathbb{H}(\Omega^+) \rightarrow H^{-\frac{1}{2}}(\partial \Omega)$ for the interior and exterior Neumann trace operator respectively (see \cite[Theorem 2.8.3]{Schwab} for precise conventions). The interior and exterior Dirichlet and Neumann trace operators can be defined analogously for functions of appropriate regularity on $\Omega^- \cup \Omega^+$ or $\mathbb{R}^3$. Furthermore, we introduce $\text{DtN} \colon H^{\frac{1}{2}}(\partial \Omega) \rightarrow H^{-\frac{1}{2}}(\partial \Omega)$ as the so-called (interior) Dirichlet-to-Neumann map that takes as input boundary data in $H^{\frac{1}{2}}(\partial \Omega)$, computes the interior harmonic extension in $H^1(\Omega^-)$, and yields as output the interior Neumann trace of the harmonic extension in $H^{-\frac{1}{2}}(\partial \Omega)$. Note that Local Dirichlet-to-Neumann maps may be defined analogously on each sphere $\partial \Omega_i, ~i=1, \ldots, N$.\vspace{2mm}
	
	Next, for each $\nu \in H^{-\frac{1}{2}}(\partial \Omega),~ \lambda \in H^{\frac{1}{2}}(\partial \Omega)$ and all $\bold{x} \in \mathbb{R}^3 \setminus \partial \Omega$ we define the functions
	\begin{align*}
		\mathcal{S}(\nu)(\bold{x}):=\int_{\partial \Omega} \frac{\nu(\bold{y})}{4\pi\vert \bold{x}- \bold{y}\vert}\, d \bold{y}, \qquad \text{and} \qquad
		\mathcal{D}(\lambda)(\bold{x}):=\int_{\partial \Omega} \lambda(\bold{y}) \eta(\bold{y}) \cdot\nabla_{\bold{y}}\frac{1}{4\pi\vert \bold{x}- \bold{y}\vert}\, d \bold{y}.
	\end{align*}
	The mappings $\mathcal{S}$ and $\mathcal{D}$ are the single layer and double layer potentials respectively. It can be shown (see, e.g., \cite[Chapter 2]{Schwab}) that $\mathcal{S}$ is a linear bounded operator from $H^{-\frac{1}{2}}(\partial \Omega)$ to $H^{1}_{\rm loc}\left(\mathbb{R}^3\right)$ and $\mathcal{D}$ is a linear bounded operator from $H^{\frac{1}{2}}(\partial \Omega)$ to $H^{1}_{\text{loc}}\left(\mathbb{R}^3 \setminus \partial \Omega\right)$, and both $\mathcal{S}$ and $\mathcal{D}$ map into the space of harmonic functions on the complement $\mathbb{R}^3 \setminus \partial \Omega$ of the boundary. Equipped with these potentials, we define the following bounded linear boundary integral operators:
	\begin{align*}
		\mathcal{V}&:= \hphantom{-}\big(\gamma^- \circ \mathcal{S}\big) \hspace{0.0cm}\colon \hspace{0.0cm}H^{-\frac{1}{2}}(\partial \Omega) \rightarrow H^{\frac{1}{2}}(\partial \Omega), \hspace{1.0cm} \mathcal{K}^{\hphantom{*}}:= \Big(\gamma^- \circ\mathcal{D} + \frac{1}{2}I\Big)\colon H^{\frac{1}{2}}(\partial \Omega) \rightarrow H^{\frac{1}{2}}(\partial \Omega),\\
		\mathcal{W}&:= -\big(\gamma_N^- \circ\mathcal{D}\big) \hspace{0.0cm}\colon \hspace{0.0cm}H^{\frac{1}{2}}(\partial \Omega) \rightarrow H^{-\frac{1}{2}}(\partial \Omega),
		\hspace{1cm}
		\mathcal{K}^*:= \Big(\gamma_N^- \circ\mathcal{S} - \frac{1}{2}I\Big)\colon H^{-\frac{1}{2}}(\partial \Omega) \rightarrow H^{-\frac{1}{2}}(\partial \Omega).
	\end{align*}
	
	Here $I$ denotes the identity operator on the relevant trace space. The mapping $\mathcal{V}$ is the single layer boundary operator, the mapping $\mathcal{K}$ is the double layer boundary operator, the mapping $\mathcal{K}^*$ is the adjoint double layer boundary operator and the mapping $\mathcal{W}$ is the hypersingular boundary operator. Detailed definitions and properties of these boundary integral operators can be found in \cite[Chapter 3]{Schwab}. We state two properties in particular that will be used in the sequel. \vspace{2mm}
	
	{{\textbf{Property 1:} \cite[Theorem 3.5.3]{Schwab} The single layer boundary operator $\mathcal{V} \colon H^{-\frac{1}{2}}(\partial \Omega) \rightarrow H^{\frac{1}{2}}(\partial \Omega)$ is hermitian and coercive, i.e., there exists a constant $c_{\mathcal{V}} > 0$ such that for all $\sigma \in H^{-\frac{1}{2}}(\partial \Omega)$ it holds that}
		\begin{align*}
			\langle \sigma, \mathcal{V}\sigma \rangle_{H^{-\frac{1}{2}}(\partial \Omega) \times H^{\frac{1}{2}}(\partial \Omega)} \geq c_{\mathcal{V}} \Vert \sigma\Vert^2_{H^{-\frac{1}{2}}(\partial \Omega)}.
		\end{align*}
		
		This implies in particular that the inverse $\mathcal{V}^{-1} \colon H^{\frac{1}{2}}(\partial \Omega) \rightarrow H^{-\frac{1}{2}}(\partial \Omega)$ is also a hermitian, coercive and bounded linear operator. Consequently, $\mathcal{V}$ induces a norm $\Vert \cdot \Vert_{\mathcal{V}}$ and associated inner product on $H^{-\frac{1}{2}}(\partial \Omega)$ and the inverse $\mathcal{V}^{-1}$ induces a norm $\Vert \cdot \Vert_{\mathcal{V}^{-1}}$ and associated inner product on $H^{\frac{1}{2}}(\partial \Omega)$.}\\
	
	{	{\textbf{Property 2:}\cite[Theorem 3.5.3]{Schwab}} The hypersingular boundary operator $\mathcal{W} \colon H^{\frac{1}{2}}(\partial \Omega) \rightarrow H^{\frac{1}{2}}(\partial \Omega)$ is hermitian, non-negative and coercive on a subspace of $H^{\frac{1}{2}}(\partial \Omega)$, i.e., there exists a constant $c_{\mathcal{W}} > 0$ such that for all functions $\lambda \in H^{\frac{1}{2}}(\partial \Omega)$ with $\sum_{i=1}^N\left\vert\int_{\partial \Omega_i} \lambda(\bold{x})\, d \bold{x}\right\vert=0$, it holds that
		\begin{align*}
			\langle \mathcal{W}\lambda, \lambda \rangle_{H^{-\frac{1}{2}}(\partial \Omega) \times H^{\frac{1}{2}}(\partial \Omega)} \geq c_{\mathcal{W}} \Vert \lambda\Vert^2_{H^{\frac{1}{2}}(\partial \Omega)}.
		\end{align*}	
		
\vspace{2mm}
		\subsection{Dielectric Spheres Electrostatic Interaction Problem}~\vspace{2mm}
		
		Let us now state the problem we wish to analyse. In order to avoid trivial situations, we assume throughout this article that $\kappa_j \neq \kappa_0$ for all $j=1, \ldots, N$ (see also the justification of this assumption in Remark 2.5 in \cite{Hassan}).\vspace{2mm}
		
		\noindent{\textbf{Integral Equation Formulation for the Induced Charges}}~
		
		Let $\sigma_f \in H^{-\frac{1}{2}}(\partial \Omega)$. Find $\nu \in H^{-\frac{1}{2}}(\partial \Omega)$ with the property that
		\begin{align}\label{eq:3.3a}
			\nu - \frac{\kappa_0-\kappa}{\kappa_0} (\text{DtN}\mathcal{V})\nu= \frac{4\pi}{\kappa_0}\sigma_f.
		\end{align}
	
		Here, $\sigma_f \in H^{-\frac{1}{2}}(\partial \Omega)$ is called the free charge and is a known quantity. Physically, this is the charge (up to a scaling factor) on each dielectric sphere in the absence of any polarisation effects, i.e., if $\kappa=\kappa_0$. The unknown $\nu \in H^{-\frac{1}{2}}(\partial \Omega)$ is called the induced surface charge. Physically, this is the charge distribution that results on each dielectric sphere after including polarisation effects.

		\begin{remark}
			Details on how to derive the boundary integral equation \eqref{eq:3.3a} from a PDE-based transmission problem and a proof of our initial claim that the BIE \eqref{eq:3.3a} is a boundary integral equation of the second kind can be found in \cite{Hassan}. 
		\end{remark}

		As discussed in the first paper \cite{Hassan}, a direct analysis of the BIE \eqref{eq:3.3a} is not feasible if we wish to obtain continuity and inf-sup constants that are independent of the number $N$ of dielectric particles in our problem. Consequently, it is necessary to adopt an indirect approach and reformulate the boundary integral equation~\eqref{eq:3.3a} in terms of a so-called surface electrostatic potential $\lambda:= \mathcal{V}\nu \in H^{\frac{1}{2}}(\partial \Omega)$. {Indeed, in view of Property 1 above we see that $\lambda =\mathcal{V}\nu$ defines an isomorphism from $H^{-\frac{1}{2}}(\partial \Omega)$ to $H^{\frac{1}{2}}(\partial \Omega)$ and we can thus write an equivalent~BIE.}\vspace{3mm}
		
		\noindent{\textbf{Integral Equation Formulation for the Surface Electrostatic Potential}}~
		
		Let $\sigma_f \in H^{-\frac{1}{2}}(\partial \Omega)$. Find $\lambda \in H^{\frac{1}{2}}(\partial \Omega)$ with the property that
		\begin{align}\label{eq:3.3}
			\lambda - \mathcal{V} \text{DtN}\Big(\frac{\kappa_0-\kappa}{\kappa_0} \lambda\Big)= \frac{4\pi}{\kappa_0}\mathcal{V}\sigma_f.
		\end{align}
		
		For clarity of exposition, we define the relevant boundary integral operators.
		\begin{definition}\label{def:A}
			We define the linear operator $\mathcal{A} \colon {H}^{\frac{1}{2}}(\partial \Omega) \rightarrow {H}^{\frac{1}{2}}(\partial \Omega)$ as
			\begin{align*}
				\mathcal{A} \lambda:= \lambda - \mathcal{V} \text{DtN}\Big(\frac{\kappa_0-\kappa}{\kappa_0} \lambda\Big) \qquad \forall \lambda \in {H}^{\frac{1}{2}}(\partial \Omega).
			\end{align*}
			In addition, we denote by $ \mathcal{A}^* \colon {H}^{-\frac{1}{2}}(\partial \Omega) \rightarrow {H}^{-\frac{1}{2}}(\partial \Omega)$ the adjoint operator of $\mathcal{A}$. 
		\end{definition}

		Next, we define the approximation spaces and state the Galerkin discretisation of the boundary integral equation \eqref{eq:3.3a}. In the sequel, we will denote by $\mathbb{N}_0$ the set of non-negative integers.\vspace{2mm}
		
		\noindent \textbf{Notation:} Let $\ell \in \mathbb{N}_0$ and $m \in \{-\ell, \ldots, \ell\}$ be integers. We denote by $\mathcal{Y}_{\ell m} \colon \mathbb{S}^2 \rightarrow \mathbb{R}$ the real-valued $L^2$-orthonormal spherical harmonic of degree $\ell$ and order $m$. 

		\begin{definition}[Approximation Space on a Sphere]\label{def:6.6}~
			Let $\mathcal{O}_{\bold{x}_0} \subset \mathbb{R}^3$ be an open ball of radius $r > 0$ centred at the point $\bold{x}_0 \in \mathbb{R}^3$ and let $\ell_{\max} \in \mathbb{N}_0$. We define the finite-dimensional Hilbert space $W^{\ell_{\max}}(\partial \mathcal{O}_{\bold{x}_0}) \subset {H}^{\frac{1}{2}}(\partial\mathcal{O}_{\bold{x}_0}) \subset  {H}^{-\frac{1}{2}}(\partial\mathcal{O}_{\bold{x}_0})$ as the vector space
			\begin{align*}
				W^{\ell_{\max}}(\partial \mathcal{O}_{\bold{x}_0}):= \Big\{u \colon \partial \mathcal{O}_{\bold{x}_0} \rightarrow \mathbb{R} \text{ such that } u(\bold{x})= \sum_{{\ell}=0}^{\ell_{\max}} \sum_{m=-\ell}^{m=+\ell} [u]^\ell_m \mathcal{Y}_{\ell m}\left(\frac{\bold{x}-\bold{x}_0}{\vert \bold{x}-\bold{x}_0\vert}\right) \text{where all }[u]_{\ell}^m \in \mathbb{R}\Big\},
			\end{align*}
			equipped with the inner product
			\begin{align}\label{eq:SH1}
				(u, v)_{W^{\ell_{\max}}(\partial \mathcal{O}_{\bold{x}_0})}:=r^2 [u]_0^0 [v]_0^0 + r^2 \sum_{\ell=1}^{\ell_{\max}} \sum_{m=-\ell}^{m=+\ell} \frac{\ell}{r} [u]_{\ell}^m [v]_{\ell}^m \qquad \forall u, v \in W^{\ell_{\max}}(\partial \mathcal{O}_{\bold{x}_0}).
			\end{align}	
		\end{definition}
		
		It is now straightforward to extend the Hilbert space defined in Definition \ref{def:6.6} to the domain $\partial \Omega$.\vspace{3mm}
		
		\begin{definition}[Global Approximation Space]\label{def:6.7}~
			Let $\ell_{\max} \in \mathbb{N}_0$. We define the finite-dimensional Hilbert space $W^{\ell_{\max}}(\partial \Omega) \subset H^{\frac{1}{2}}(\partial \Omega) \subset H^{-\frac{1}{2}}(\partial \Omega)$ as the vector space
			\begin{align*}
				W^{\ell_{\max}}(\partial \Omega) := \Big\{u \colon \partial\Omega \rightarrow \mathbb{R} \text{ such that } \forall i \in \{1, \ldots, N\} \colon u\vert_{\partial \Omega_i} \in W^{\ell_{\max}}(\partial \Omega_i)\Big\},
			\end{align*}
			equipped with the inner product	$(u, v)_{W^{\ell_{\max}}(\partial \Omega) }:= \sum_{i=1}^N \left(u, v\right)_{W^{\ell_{\max}}(\partial \Omega_i)} ~\forall u, v \in W^{\ell_{\max}}(\partial \Omega) $.
			
		\end{definition}
		
		\vspace{2mm}
		
		\noindent{\textbf{Galerkin Discretisation of the Integral Equation \eqref{eq:3.3a}}}~
		
		Let $\sigma_f \in {H}^{-\frac{1}{2}}(\partial \Omega)$ and let $\ell_{\max} \in \mathbb{N}$. Find $\nu_{\ell_{\max}} \in W^{\ell_{\max}}(\partial \Omega) $ such that for all $\psi_{\ell_{\max}} \in W^{\ell_{\max}}(\partial \Omega) $ it holds that
		\begin{align}\label{eq:Galerkina}
			\left(\mathcal{A}^*\nu_{\ell_{\max}}, \psi_{\ell_{\max}}\right)_{L^2(\partial \Omega)}= \frac{4\pi}{\kappa_0}\left(\sigma_f, \psi_{\ell_{\max}}\right)_{L^2(\partial \Omega)}.
		\end{align}
		
		Once again, a direct analysis of the Galerkin discretisation \eqref{eq:Galerkina} is not feasible since we are unable to obtain $N$-independent stability constants. This difficulty was circumvented in \cite{Hassan} through the introduction of a so-called ``reduced'' global approximation space and a ``reduced'' Galerkin discretisation of the BIE \eqref{eq:3.3} for the surface electrostatic potential. We present here only the essentials of this approach.
		
		\begin{definition}\label{def:decomp}
			We define the $N$-dimensional, closed subspace $\mathcal{C}(\partial \Omega) \subset H^{\frac{1}{2}}(\partial \Omega)$ as the vector space
			\begin{align*}
				\mathcal{C}(\partial \Omega):= \left\{u \colon \partial \Omega \rightarrow \mathbb{R} \colon \forall i =1, \ldots, N \text{ the restriction }u|_{\partial \Omega_i} \text{ is a constant function}\right\},
			\end{align*}
			equipped with the $L^2(\partial \Omega)$ norm. Additionally, we define the subspaces $\breve{H}^{\frac{1}{2}}(\partial \Omega) \subset {H}^{\frac{1}{2}}(\partial \Omega)$ and $\breve{H}^{-\frac{1}{2}}(\partial \Omega) \subset {H}^{-\frac{1}{2}}(\partial \Omega)$ as the vector spaces
			\begin{align*}
				\breve{H}^{\frac{1}{2}}(\partial \Omega)&:=\left\{u \in H^{\frac{1}{2}}(\partial \Omega) \colon (u, v)_{L^2(\partial \Omega)}=0 \hspace{21mm} \forall v \in \mathcal{C}(\partial \Omega)\right\},\\
				\breve{H}^{-\frac{1}{2}}(\partial \Omega)&:=\left\{\phi \in H^{-\frac{1}{2}}(\partial \Omega) \colon \langle \phi, v\rangle_{H^{-\frac{1}{2}}(\partial \Omega) \times H^{\frac{1}{2}}(\partial \Omega)}=0 \hspace{3mm}\forall v \in \mathcal{C}(\partial \Omega)\right\},
			\end{align*}
			equipped with the respective fractional Sobolev norms introduced earlier. Moreover, we denote the projection operators associated with these decompositions as $\mathbb{P}_{0} \colon H^{\frac{1}{2}}(\partial \Omega) \rightarrow  \mathcal{C}(\partial \Omega)$, $\mathbb{Q}_{0}\colon H^{-\frac{1}{2}}(\partial \Omega) \rightarrow \mathcal{C}(\Omega)$, $\mathbb{P}_{0}^{\perp} \colon H^{\frac{1}{2}}(\partial \Omega) \rightarrow \breve{H}^{\frac{1}{2}}(\partial \Omega)$, and $\mathbb{Q}_{0}^{\perp} \colon H^{\frac{1}{2}}(\partial \Omega) \rightarrow \breve{H}^{-\frac{1}{2}}(\partial \Omega)$.
		\end{definition}
		

		Intuitively, the spaces $\breve{H}^{\frac{1}{2}}(\partial \Omega)$ and $\breve{H}^{-\frac{1}{2}}(\partial \Omega)$ are trace spaces that do not contain any piecewise constant functions. It can therefore be shown that on these spaces, the Dirichlet-to-Neumann map $\text{DtN}\colon \breve{H}^{\frac{1}{2}}(\partial \Omega) \rightarrow \breve{H}^{-\frac{1}{2}}(\partial \Omega)$ is a continuous bijection. Naturally, these spaces can also be defined on an individual sphere. \vspace{2mm}
		
		Next, we introduce new norms on the underlying trace spaces. As shown in \cite{Hassan}, these norms are an essential ingredient in the analysis of the integral operators $\mathcal{A}^*$ and $\mathcal{A}$.
		
		\begin{definition}\label{def:NewNorm}
			We define on $H^{\frac{1}{2}}(\partial \Omega)$ a new norm $||| \cdot ||| \colon H^{\frac{1}{2}}(\partial \Omega) \rightarrow \mathbb{R}$ given by
			\begin{align*}
				\forall \lambda \in H^{\frac{1}{2}}(\partial \Omega)\colon ~ |||\lambda|||^2:= \left\Vert \mathbb{P}_0 \lambda\right\Vert^2_{L^2(\partial \Omega)}+ \left\langle \text{DtN}\lambda, \lambda\right \rangle_{H^{-\frac{1}{2}}(\partial \Omega) \times H^{\frac{1}{2}}(\partial \Omega)},
			\end{align*}
			and we define on $H^{-\frac{1}{2}}(\partial \Omega)$ a new dual norm $||| \cdot |||^* \colon H^{-\frac{1}{2}}(\partial \Omega) \rightarrow \mathbb{R}$ given by
			\begin{align*}
				||| \sigma |||^*:= \sup_{0\neq \psi \in H^{\frac{1}{2}}(\partial \Omega) } \frac{\left \langle \sigma, \psi \right \rangle_{H^{-\frac{1}{2}}(\partial \Omega) \times H^{\frac{1}{2}}(\partial \Omega)}}{||| \psi |||}.
			\end{align*} 
		\end{definition}
		
		It was shown in \cite{Hassan} that the norm $||| \cdot |||$ is equivalent to the usual $\Vert \cdot \Vert_{H^{\frac{1}{2}}(\partial \Omega)}$ norm, i.e., there exists a constant $c_{\rm equiv} >1$ that is independent of $N$ such that for all $\lambda \in H^{\frac{1}{2}}(\partial \Omega)$ it holds that $\frac{1}{c_{\rm equiv}} ||| \lambda ||| \leq \Vert  \lambda \Vert_{H^{\frac{1}{2}}(\partial \Omega)} \leq c_{\rm equiv} ||| \lambda |||$. Similarly, the new $||| \cdot |||^*$ dual norm on ${H}^{-\frac{1}{2}}(\partial \Omega)$ is equivalent to the canonical dual norm $\Vert \cdot \Vert_{H^{-\frac{1}{2}}(\partial \Omega)}$ with an equivalence constant that is once again independent of $N$. Furthermore, it is a simple exercise to prove that for all $\tilde{\lambda} \in \breve{H}^{\frac{1}{2}}(\partial \Omega)$ it holds that
		\begin{align*}
			||| \text{DtN} \tilde{\lambda} |||^* = ||| \tilde{\lambda} |||.
		\end{align*}
		
		In the sequel, we adopt the convention that the Hilbert spaces ${H}^{\frac{1}{2}}(\partial \Omega), \breve{H}^{\frac{1}{2}}(\partial \Omega)$ are equipped with the $||| \cdot |||$ norm (and associated inner product) and that the dual spaces $H^{-\frac{1}{2}}(\partial \Omega), \breve{H}^{-\frac{1}{2}}(\partial \Omega)$ are equipped with the $||| \cdot |||^*$ norm. Notice that on the space $W^{\ell_{\max}}(\partial \Omega) $, the $||| \cdot |||$ norm and the $\Vert \cdot \Vert_{W^{\ell_{\max}}(\partial \Omega) }$ norm coincide. With this convention, we define additional mappings that will be of use in the next section.
		
		\begin{definition}[Orthogonal Projectors on the Approximation Space]\label{def:PQ}~
			Let $\ell_{\max} \in \mathbb{N}_0$ and let the approximation space $W^{\ell_{\max}}(\partial \Omega) $ be defined as in Definition \ref{def:6.7}. We denote by $\mathbb{P}_{\ell_{\max}} \colon H^{\frac{1}{2}}(\partial\Omega) \rightarrow W^{\ell_{\max}}(\partial \Omega) $ and $\mathbb{Q}_{\ell_{\max}}\colon H^{-\frac{1}{2}}(\partial\Omega) \rightarrow W^{\ell_{\max}}(\partial \Omega) $ the orthogonal projection operators on these spaces.
		\end{definition}
		
		
		
		\begin{definition}\label{def:tildeA}
			We define the linear operator $\widetilde{\mathcal{A}} \colon \breve{H}^{\frac{1}{2}}(\partial \Omega) \rightarrow \breve{H}^{\frac{1}{2}}(\partial \Omega)$ as $\widetilde{\mathcal{A}}:= \mathbb{P}^{\perp}_0 \mathcal{A} \mathbb{P}_0^{\perp}$, and we refer to $\widetilde{\mathcal{A} }$ as the `modified' boundary integral operator.
		\end{definition}

		Finally, we define the ``reduced'' approximation spaces associated with the Galerkin discretisation of the `modified' boundary integral operator $\widetilde{\mathcal{A} }$. 
		
		\begin{definition}[Reduced Global Approximation Space]\label{def:Appromxation2}
			Let $\ell_{\max} \in \mathbb{N}$. We define constructively the finite-dimensional Hilbert space $W_0^{\ell_{\max}}(\partial \Omega) \subset \breve{H}^{\frac{1}{2}}(\partial \Omega)$ as the set
			\begin{align*}
				W_0^{\ell_{\max}}(\partial \Omega):= \Big\{u \in W^{\ell_{\max}}(\partial \Omega)\colon \mathbb{P}_0 u = 0\Big\},
			\end{align*}
			equipped with the $(\cdot, \cdot)_{W^{\ell_{\max}}(\partial \Omega) }$ inner product.
		\end{definition}

		\noindent{\textbf{``Reduced'' Galerkin Discretisation of the Integral Equation \eqref{eq:3.3} with Modified RHS} }~
		
		Let $\sigma_f \in {H}^{-\frac{1}{2}}(\partial \Omega)$, let $\ell_{\max} \in \mathbb{N}$, and let $\mathbb{Q}_{\ell_{\max}} \colon H^{-\frac{1}{2}}(\partial \Omega) \rightarrow W^{\ell_{\max}}(\partial \Omega) $ be the orthogonal projection operator. Find $\lambda_{\ell_{\max}} \in W_0^{\ell_{\max}}(\partial \Omega)$ such that for all $\sigma_{\ell_{\max}} \in W_0^{\ell_{\max}}(\partial \Omega)$ it holds that
		\begin{align}\label{eq:Galerkin}
			\left(\widetilde{\mathcal{A}}\lambda_{\ell_{\max}}, \sigma_{\ell_{\max}}\right)_{L^2(\partial \Omega)}= \frac{4\pi}{\kappa_0}\left(\mathcal{V}\mathbb{Q}_{\ell_{\max}}\sigma_f, \sigma_{\ell_{\max}}\right)_{L^2(\partial \Omega)}.
		\end{align}
		
		Notice that Equation \eqref{eq:Galerkin} is not exactly the Galerkin discretisation of the BIE \eqref{eq:3.3} on the reduced approximation space $W_0^{\ell_{\max}}(\partial \Omega)$. This is because the right-hand side of Equation \eqref{eq:Galerkin} is not precisely the projection of the right-hand side of the BIE \eqref{eq:3.3} onto $W_0^{\ell_{\max}}(\partial \Omega)$ unless $\sigma_f \in W^{\ell_{\max}}(\partial \Omega) $. There are two main reasons we introduce this so-called ``reduced'' Galerkin discretisation of BIE \eqref{eq:3.3} with modified right-hand side. First, it is possible to obtain continuity and discrete stability constants for $\widetilde{\mathcal{A} }$ that are \emph{independent of $N$}. Indeed, we have the following result:
		\begin{theorem}\label{lem:well-posed}
			{Let the set $N_{+} \subset \mathbb{N}$ consist of indices $i \in \{1, \ldots, N\}$ such that $\kappa\vert_{\partial \Omega_i} > \kappa_0$ and the set $N_{-} \subset \mathbb{N}$ consist of indices $j \in \{1, \ldots, N\}$ such that $\kappa\vert_{\partial \Omega_j} < \kappa_0$, and let the constants $C_{\tilde{\mathcal{A}}}$ and $\beta_{\tilde{\mathcal{A}}}$ be defined as}
			\begin{align}\label{eq:contin}
				C_{\tilde{\mathcal{A}}}:=1+\max\left \vert \frac{\kappa - \kappa_0}{\kappa_0}\right \vert\cdot \left(\frac{c_{\rm equiv}}{\sqrt{c_{\mathcal{V}}}} \right), \qquad \text{and} \qquad \beta_{\tilde{\mathcal{A}}}:= \frac{\min\left\{ \min_{j \in N_+}\frac{\kappa_j-\kappa_0}{\kappa_0},~ \min_{j \in N_-} \frac{\kappa_j}{\kappa_0}\frac{\kappa_0-\kappa_j}{\kappa_0} \right\}}{ \max_{j=1, \ldots, N}  \Big \vert \frac{\kappa_j-\kappa_0}{\kappa_0}\Big \vert}.
			\end{align}
			Then it holds that 
			
			\begin{enumerate}
				\item The operator $\widetilde{\mathcal{A}} \colon \breve{H}^{\frac{1}{2}}(\partial \Omega) \rightarrow \breve{H}^{\frac{1}{2}}(\partial \Omega)$ has dense range, is bounded above with constant $C_{\tilde{\mathcal{A}}}$ and bounded below with constant~$\beta_{\tilde{\mathcal{A}}}$.
				\item The finite-dimensional operator $\mathbb{P}_{\ell_{\max}}\widetilde{\mathcal{A}} \colon W_0^{\ell_{\max}}(\partial \Omega) \rightarrow W_0^{\ell_{\max}}(\partial \Omega)$ is also bounded above with constant $C_{\tilde{\mathcal{A}}}$ and bounded below with constant $\beta_{\tilde{\mathcal{A}}}$.
			\end{enumerate}
		\end{theorem}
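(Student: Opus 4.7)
The plan is to first establish the upper and lower bounds of part~(1) for the continuous operator $\widetilde{\mathcal{A}}$ on the space $(\breve{H}^{\frac{1}{2}}(\partial \Omega), |||\cdot|||)$, deduce dense range from the parallel bound on $\widetilde{\mathcal{A}}^*$, and finally transfer both bounds to the Galerkin restriction in part~(2) by verifying that the test objects used in the continuous argument leave the reduced approximation space invariant.

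For the upper bound, I would write $\mu := (\kappa_0-\kappa)/\kappa_0$ and $\widetilde{\mathcal{A}}\tilde{\lambda} = \mathbb{P}_0^{\perp}\big(\tilde{\lambda} - \mathcal{V}\,\text{DtN}(\mu\tilde{\lambda})\big)$. Two preliminary facts drive the triangle-inequality estimate. First, the Pythagorean-type identity $|||\lambda|||^2 = \|\mathbb{P}_0\lambda\|_{L^2(\partial \Omega)}^2 + |||\mathbb{P}_0^{\perp}\lambda|||^2$ holds because $\text{DtN}$ is block-diagonal on the disjoint balls and annihilates $\mathcal{C}(\partial \Omega)$, combined with the defining orthogonality of $\breve{H}^{-\frac{1}{2}}(\partial \Omega)$; this makes $\mathbb{P}_0^{\perp}$ a contraction in $|||\cdot|||$. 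Second, since $\mu$ is piecewise constant and $\tilde{\lambda}$ has vanishing sphere-wise mean, $\mu\tilde{\lambda} \in \breve{H}^{\frac{1}{2}}(\partial \Omega)$, and locality of $\text{DtN}$ yields $|||\mu\tilde{\lambda}||| \leq \max|\mu|\cdot|||\tilde{\lambda}|||$. The last ingredient is the operator inequality $|||\mathcal{V}\,\text{DtN}(\tilde{\phi})||| \leq (c_{\rm equiv}/\sqrt{c_{\mathcal{V}}})|||\tilde{\phi}|||$ for $\tilde{\phi} \in \breve{H}^{\frac{1}{2}}(\partial \Omega)$, which follows from the identity $|||\text{DtN}\tilde{\phi}|||^{*} = |||\tilde{\phi}|||$, the equivalence $|||\cdot||| \asymp \|\cdot\|_{H^{\frac{1}{2}}(\partial \Omega)}$, and the coercivity of $\mathcal{V}$ from Property~1 which converts the $\mathcal{V}$-energy norm into the $H^{-\frac{1}{2}}$ norm. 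The triangle inequality then produces the constant $C_{\widetilde{\mathcal{A}}}$.

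The lower bound is the main obstacle. I would use the $\text{DtN}$-induced Riesz isometry that identifies $(\breve{H}^{\frac{1}{2}}(\partial \Omega),|||\cdot|||)$ with its dual to rewrite the bound below as a coercivity-style inequality, and test $\langle \text{DtN}(c\tilde{\lambda}), \widetilde{\mathcal{A}}\tilde{\lambda}\rangle$ with the piecewise-constant weight
\begin{align*}
c|_{\partial \Omega_i} := \begin{cases} \frac{\kappa_i-\kappa_0}{\kappa_0} & \text{if } i \in N_+,\\[2pt] \frac{\kappa_i}{\kappa_0}\cdot\frac{\kappa_0-\kappa_i}{\kappa_0} & \text{if } i \in N_-,\end{cases}
\end{align*}
so that $c|_{\partial \Omega_i}>0$ on every sphere. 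Expanding yields two pieces: the ``energy'' contribution $\langle\text{DtN}(c\tilde{\lambda}),\tilde{\lambda}\rangle = \sum_i c_i \langle\text{DtN}_i \tilde{\lambda}|_{\partial \Omega_i}, \tilde{\lambda}|_{\partial \Omega_i}\rangle \geq (\min_i c_i)|||\tilde{\lambda}|||^2$ thanks to locality and positivity of each local $\text{DtN}_i$, and the subtracted term $\langle\text{DtN}(c\tilde{\lambda}),\mathcal{V}\,\text{DtN}(\mu\tilde{\lambda})\rangle$, which I would treat by using the symmetry of $\mathcal{V}$ and a Calder\'on-type identity of the form $\mathcal{V}\,\text{DtN} = \tfrac{1}{2}I+\mathcal{K}$. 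A single-sphere computation in the spherical-harmonic basis, in which $\mathcal{V}_i\,\text{DtN}_i$ has eigenvalues $\ell/(2\ell+1) < \tfrac{1}{2}$, confirms that the sphere-by-sphere definition of $c$ above is exactly what is needed for the difference of the two pieces to dominate $\beta_{\widetilde{\mathcal{A}}}$ times the energy contribution; the delicate step is extending this from a single sphere to the full multi-sphere setting, where the positivity of $\mathcal{V}$ must be exploited to keep the off-diagonal cross-sphere contributions under control. Finally, comparing $|||c\tilde{\lambda}||| \leq \max_j|(\kappa_j-\kappa_0)/\kappa_0|\cdot|||\tilde{\lambda}|||$ with the numerator of $\beta_{\widetilde{\mathcal{A}}}$ produces the announced constant.

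Dense range of $\widetilde{\mathcal{A}}$ then follows from the same lower bound applied to the adjoint $\widetilde{\mathcal{A}}^*$ (the argument is symmetric under adjoints), yielding injectivity of $\widetilde{\mathcal{A}}^*$. For part~(2), the upper bound on $\mathbb{P}_{\ell_{\max}}\widetilde{\mathcal{A}}$ is automatic because $\mathbb{P}_{\ell_{\max}}$ is an orthogonal projection in $|||\cdot|||$ and $W_0^{\ell_{\max}}(\partial \Omega) \subset \breve{H}^{\frac{1}{2}}(\partial \Omega)$. The discrete lower bound is obtained by repeating the test-function argument verbatim: when $\tilde{\lambda} \in W_0^{\ell_{\max}}(\partial \Omega)$, the test object $c\tilde{\lambda}$ still lies in $W_0^{\ell_{\max}}(\partial \Omega)$ because multiplication by sphere-wise constants preserves the spherical-harmonic degree, and $\text{DtN}$ is diagonal in this basis, so the same inequality survives with the same constant $\beta_{\widetilde{\mathcal{A}}}$ at the discrete level.
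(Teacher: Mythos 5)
The overall architecture of your argument is in the right spirit: deduce the lower bound from a duality pairing with a sphere-wise-constant multiple of $\tilde{\lambda}$ using the isometry $|||\text{DtN}\tilde{\phi}|||^* = |||\tilde{\phi}|||$, get dense range from injectivity of the adjoint, and inherit the discrete bounds from invariance of the test objects under $\mathbb{P}_{\ell_{\max}}$. However, your choice of test weight $c$ is not the one that makes the multi-sphere calculation close, and the "delicate step" you flag is exactly where the argument breaks. With your weight ($c_i = \tfrac{\kappa_i - \kappa_0}{\kappa_0}$ on $N_+$ but $c_i = \tfrac{\kappa_i}{\kappa_0}\cdot\tfrac{\kappa_0-\kappa_i}{\kappa_0}$ on $N_-$) the second piece $\langle \text{DtN}(c\tilde{\lambda}), \mathcal{V}\,\text{DtN}(\mu\tilde{\lambda})\rangle$ has $c\tilde{\lambda}$ and $\mu\tilde{\lambda}$ differing by a non-constant sphere-wise factor rather than a pure sign pattern, so the $N_+$/$N_-$ cross-sphere contributions in $\mathcal{V}$ do \emph{not} cancel by the symmetry of $\mathcal{V}$ and $\text{DtN}$, and the residual $\mathcal{W}$-term arising from the Calder\'on identity is no longer a quadratic form, so its sign cannot be deduced from $\mathcal{W}\geq 0$. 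Positivity of $\mathcal{V}$ alone cannot rescue this, since those cross contributions are indefinite.

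The fix is to test instead with the weight $c_i = \bigl|\tfrac{\kappa_i-\kappa_0}{\kappa_0}\bigr| = |\mu_i|$ on \emph{every} sphere (so the extra factor $\tfrac{\kappa_i}{\kappa_0}$ on $N_-$ is dropped). Writing $\psi^{\pm} := |\mu|\tilde{\lambda}\,\mathbf{1}_{N_\pm}$ one has $c\tilde{\lambda} = \psi^+ + \psi^-$ and $\mu\tilde{\lambda} = -\psi^+ + \psi^-$, so the $N_+$/$N_-$ cross terms in $\langle\text{DtN}(c\tilde{\lambda}), \mathcal{V}\,\text{DtN}(\mu\tilde{\lambda})\rangle$ cancel exactly by symmetry, leaving $-\langle\text{DtN}\psi^+, \mathcal{V}\,\text{DtN}\psi^+\rangle + \langle\text{DtN}\psi^-, \mathcal{V}\,\text{DtN}\psi^-\rangle$. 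The first term has the helpful sign by coercivity of $\mathcal{V}$, and the second is bounded by $\langle \text{DtN}\psi^-, \psi^-\rangle$ using the Calder\'on identity $\text{DtN}\,\mathcal{V}\,\text{DtN} = \text{DtN}-\mathcal{W}$ together with $\mathcal{W}\geq 0$ applied to the genuine quadratic form $\langle\mathcal{W}\psi^-, \psi^-\rangle$. The $\tfrac{\kappa_i}{\kappa_0}$ factor in the numerator of $\beta_{\tilde{\mathcal{A}}}$ then appears \emph{after} this step, as $1-|\mu_i| = \tfrac{\kappa_i}{\kappa_0}$ for $i\in N_-$, not as part of the test weight; and $|||c\tilde{\lambda}||| \leq \max_j\bigl|\tfrac{\kappa_j-\kappa_0}{\kappa_0}\bigr|\,|||\tilde{\lambda}|||$ gives the denominator. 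A further minor point: the adjoint $\widetilde{\mathcal{A}}^*$ on $(\breve{H}^{\frac{1}{2}},|||\cdot|||)$ is $\tilde{\phi}\mapsto\mathbb{P}_0^{\perp}(\tilde{\phi} - \tfrac{\kappa_0-\kappa}{\kappa_0}\mathcal{V}\,\text{DtN}\tilde{\phi})$, with the dielectric factor on the outside, so the dense-range claim does need a parallel (but not literally "the same") pairing argument rather than an appeal to symmetry. (The paper itself cites Lemmas 4.5, 4.10 and 4.13 of the companion article rather than proving the result in place, so there is no in-text proof to match against; the issues above are internal to your argument.)
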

		\begin{proof}
			See Lemma 4.5, 4.10, and 4.13 in \cite{Hassan}.
		\end{proof}

		Second, as the next lemma shows, if we have obtained a solution to Equation \eqref{eq:Galerkin} then a solution to the original Galerkin discretisation \eqref{eq:Galerkina} for the induced surface charge can be computed easily.
		\begin{lemma}\label{lem:indirect}
			Let $\sigma_f \in {H}^{-\frac{1}{2}}(\partial \Omega)$, let $\ell_{\max} \in \mathbb{N}$, let $\mathbb{Q}_{\ell_{\max}} \colon H^{-\frac{1}{2}}(\partial \Omega) \rightarrow W^{\ell_{\max}}(\partial \Omega) $ denote the orthogonal projection operator, let $\lambda_{\ell_{\max}} \in W^{\ell_{\max}}_0(\partial \Omega) $ be the solution of  the ``reduced'' Galerkin discretisation \eqref{eq:Galerkin} with right hand side defined through $\sigma_f$, and let $\nu_{\ell_{\max}} \in W^{\ell_{\max}}(\partial \Omega) $ be the solution of the original Galerkin discretisation \eqref{eq:Galerkina} with right hand side defined through $\sigma_f$. Then it holds that
			\begin{align*}
				\nu_{\ell_{\max}} = \frac{\kappa_0-\kappa}{\kappa_0}\text{\rm DtN}\lambda_{\ell_{\max}} +\frac{4\pi}{\kappa_0} \mathbb{Q}_{\ell_{\max}}\sigma_f.
			\end{align*}
		\end{lemma}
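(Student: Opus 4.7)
The plan is to set $\alpha := \tfrac{\kappa_0-\kappa}{\kappa_0}$ and
$\tilde\nu := \alpha\,\text{DtN}\lambda_{\ell_{\max}} + \tfrac{4\pi}{\kappa_0}\mathbb{Q}_{\ell_{\max}}\sigma_f$,
verify that $\tilde\nu$ satisfies the Galerkin equation~\eqref{eq:Galerkina}, and then conclude $\tilde\nu = \nu_{\ell_{\max}}$ by uniqueness of the discrete solution---an immediate consequence of Theorem~\ref{lem:well-posed} together with the fact that $\mathcal{A}$ acts as the identity on $\mathcal{C}(\partial\Omega)\cap W^{\ell_{\max}}(\partial\Omega)$, which jointly yield invertibility of $\mathcal{A}^*$ on the finite-dimensional space $W^{\ell_{\max}}(\partial\Omega)$. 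Two structural facts drive the verification. First, $\tilde\nu\in W^{\ell_{\max}}(\partial\Omega)$: since $\partial\Omega$ is the disjoint union of the spheres $\partial\Omega_i$, the interior Dirichlet-to-Neumann map is local and is diagonal in the spherical-harmonic basis on each $\partial\Omega_i$ (with eigenvalues $\ell/r_i$), so it preserves $W^{\ell_{\max}}$; multiplication by the piecewise constant $\alpha$ also preserves $W^{\ell_{\max}}$; and $\mathbb{Q}_{\ell_{\max}}\sigma_f\in W^{\ell_{\max}}$ by construction. Second, the same locality yields the crucial commutation $\text{DtN}(\alpha\mu) = \alpha\,\text{DtN}\mu$ for every $\mu\in H^{\frac{1}{2}}(\partial\Omega)$, which bridges the $\lambda$-formulation used in~\eqref{eq:Galerkin} and the $\nu$-formulation used in~\eqref{eq:Galerkina}.

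To verify~\eqref{eq:Galerkina}, I split any test function $\psi\in W^{\ell_{\max}}(\partial\Omega)$ as $\psi = \psi_0 + \widetilde\psi$ with $\psi_0 := \mathbb{P}_0\psi\in\mathcal{C}(\partial\Omega)\cap W^{\ell_{\max}}(\partial\Omega)$ and $\widetilde\psi := \mathbb{P}_0^{\perp}\psi\in W_0^{\ell_{\max}}(\partial\Omega)$. For $\psi_0$, the commutation gives $\text{DtN}(\alpha\psi_0) = \alpha\,\text{DtN}\psi_0 = 0$, so $\mathcal{A}\psi_0 = \psi_0$, and the equation reduces to $(\tilde\nu,\psi_0)_{L^2} = \tfrac{4\pi}{\kappa_0}(\sigma_f,\psi_0)_{L^2}$; the contribution of $\alpha\,\text{DtN}\lambda_{\ell_{\max}}$ to $(\tilde\nu,\psi_0)_{L^2}$ vanishes because $\int_{\partial\Omega_i}\text{DtN}\lambda_{\ell_{\max}}\,ds = 0$ for each $i$ by Green's identity, and the $\mathbb{Q}_{\ell_{\max}}\sigma_f$-contribution matches the right-hand side by the defining $L^2$-duality property of $\mathbb{Q}_{\ell_{\max}}$ on $W^{\ell_{\max}}$.

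The substantive case is $\widetilde\psi\in W_0^{\ell_{\max}}(\partial\Omega)$. Using self-adjointness of $\mathcal{V}$, I rewrite $(\mathcal{A}^*\tilde\nu,\widetilde\psi)_{L^2} = \langle\tilde\nu,\widetilde\psi\rangle - \langle\mathcal{V}\tilde\nu,\text{DtN}(\alpha\widetilde\psi)\rangle$, and then the commutation together with the strong form $\mathcal{V}\,\text{DtN}(\alpha\lambda_{\ell_{\max}}) = \lambda_{\ell_{\max}} - \mathcal{A}\lambda_{\ell_{\max}}$ read off Definition~\ref{def:A} yields $\mathcal{V}\tilde\nu = \lambda_{\ell_{\max}} - r$, where $r := \mathcal{A}\lambda_{\ell_{\max}} - \tfrac{4\pi}{\kappa_0}\mathcal{V}\mathbb{Q}_{\ell_{\max}}\sigma_f$. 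The reduced Galerkin equation~\eqref{eq:Galerkin}, combined with the identity $\widetilde{\mathcal{A}} = \mathbb{P}_0^{\perp}\mathcal{A}$ on $\breve{H}^{\frac{1}{2}}(\partial\Omega)$, says precisely that $r$ is $L^2$-orthogonal to $W_0^{\ell_{\max}}(\partial\Omega)$. Since $\alpha\widetilde\psi\in W_0^{\ell_{\max}}$ (piecewise constancy of $\alpha$ preserves both $W^{\ell_{\max}}$ and the zero-mean condition) and $\text{DtN}$ sends $W^{\ell_{\max}}$ into $W_0^{\ell_{\max}}$ (Green's identity forces zero mean on each sphere), one has $\text{DtN}(\alpha\widetilde\psi)\in W_0^{\ell_{\max}}$ and consequently $\langle r,\text{DtN}(\alpha\widetilde\psi)\rangle = 0$. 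Finally, $L^2$-duality for $\mathbb{Q}_{\ell_{\max}}$ and self-adjointness of $\text{DtN}$ on each sphere give $\langle\tilde\nu,\widetilde\psi\rangle = \langle\lambda_{\ell_{\max}},\text{DtN}(\alpha\widetilde\psi)\rangle + \tfrac{4\pi}{\kappa_0}(\sigma_f,\widetilde\psi)_{L^2}$, the two $\langle\lambda_{\ell_{\max}},\text{DtN}(\alpha\widetilde\psi)\rangle$ terms cancel, and~\eqref{eq:Galerkina} follows. The only non-mechanical step is recognising the commutation $\text{DtN}\,\alpha = \alpha\,\text{DtN}$, a consequence of the separation of the spheres and the piecewise constancy of $\alpha$; once this is in hand, the verification is simply the discrete analogue of the continuous relation $\nu = \alpha\,\text{DtN}\lambda + \tfrac{4\pi}{\kappa_0}\sigma_f$ one reads off from~\eqref{eq:3.3a} upon setting $\lambda = \mathcal{V}\nu$, with the residual $r$ playing the role of the exact zero.
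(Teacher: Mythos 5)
Your proof is correct, and it takes the natural route that the paper almost certainly relies on as well (the paper defers the proof to Theorem~4.17 of the cited reference, so there is no in-text argument to compare against): one checks that the claimed right-hand side $\tilde\nu := \alpha\,\text{DtN}\lambda_{\ell_{\max}} + \tfrac{4\pi}{\kappa_0}\mathbb{Q}_{\ell_{\max}}\sigma_f$ lies in $W^{\ell_{\max}}(\partial\Omega)$ and satisfies~\eqref{eq:Galerkina}, and then invokes uniqueness. You have correctly isolated the two structural facts that make the verification go through: the commutation $\text{DtN}(\alpha\,\cdot) = \alpha\,\text{DtN}(\cdot)$, which holds because $\alpha = \tfrac{\kappa_0-\kappa}{\kappa_0}$ is constant on each $\partial\Omega_i$ and $\text{DtN}$ acts sphere by sphere on the disjoint union $\partial\Omega = \sqcup_i \partial\Omega_i$; and the invariance of $W^{\ell_{\max}}$ (and of $W_0^{\ell_{\max}}$) under both $\text{DtN}$ and multiplication by $\alpha$, which reduces~\eqref{eq:Galerkina} tested against $W_0^{\ell_{\max}}$ to precisely the residual orthogonality encoded in~\eqref{eq:Galerkin}. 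The split of the test function into $\mathbb{P}_0\psi$ and $\mathbb{P}_0^{\perp}\psi$, and the observation that $\mathcal{A}$ acts as the identity on piecewise constants while $\int_{\partial\Omega_i}\text{DtN}\lambda\,ds=0$ by Green's identity, cleanly dispose of the constant part. One small point worth making explicit: you repeatedly use ``$L^2$-duality of $\mathbb{Q}_{\ell_{\max}}$'' in the form $(\mathbb{Q}_{\ell_{\max}}\sigma_f,\psi)_{L^2} = \langle\sigma_f,\psi\rangle$ for $\psi\in W^{\ell_{\max}}$, whereas the paper defines $\mathbb{Q}_{\ell_{\max}}$ as the $|||\cdot|||^*$-orthogonal projection. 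These coincide here because the $|||\cdot|||^*$ inner product is diagonal in the local spherical-harmonic basis (its Riesz map is essentially $\text{DtN}^{-1}$ on $\breve H^{-\frac12}$ and the $L^2$ pairing on $\mathcal{C}(\partial\Omega)$), so the projection is simply coefficient truncation at degree $\ell_{\max}$ and agrees with the $L^2$-orthogonal projection; stating this removes any ambiguity.
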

		\begin{proof}
			See the proof of Theorem 4.17 in \cite{Hassan}.
		\end{proof}
		
		
		Theorem \ref{lem:well-posed} and Lemma \ref{lem:indirect} can together be used to prove that both the BIEs \eqref{eq:3.3a} and \eqref{eq:3.3} as well as the Galerkin discretisations \eqref{eq:Galerkina} and \eqref{eq:Galerkin} are well-posed, and to obtain convergence rates for the induced surface charge that do not explicitly depend on $N$ (see \cite{Hassan} for details).
		
		\section{Convergence Analysis of the Linear Solver and Solution Strategies}\label{sec:3}
		
		Throughout this section, we assume the setting described in Section \ref{sec:2}. Before proceeding to our main results however, let us describe in more detail the obstacles we face in performing a detailed convergence analysis. 
		
		Our main goal it to prove that we can obtain a solution-- up to given and fixed tolerance-- to the Galerkin discretisation \eqref{eq:Galerkina} for the approximate induced surface charge $\nu_{\ell_{\max}}$ \emph{using only $\mathcal{O}(N)$ operations.} Naturally, computing a solution to the finite-dimensional equation \eqref{eq:Galerkina} involves solving a linear system, which is typically done using a Krylov subspace solver. Consequently, in order to obtain the required $\mathcal{O}(N)$ scaling, we must show~that
		\begin{itemize}
			\item[B1)] It is possible to perform matrix vector multiplications involving the underlying solution matrix using $\mathcal{O}(N)$ operations;
			\item[B2)] It is possible to obtain an upper bound \underline{independent of $N$} on the number of iterations of the Krylov subspace solver required to obtain the solution of the linear system (up to some tolerance).
		\end{itemize}
		
		Let us first consider B1). Notice that the underlying boundary integral operators $\mathcal{A}^*$ and $\mathcal{A}$ defined through Definition \ref{def:A} are constructed from the single layer boundary operator $\mathcal{V}$, the Dirichlet-to-Neumann map $\text{DtN}$, the identity map, and the dielectric function $\kappa$. Given our choice of approximation space $W^{\ell_{\max}}$, the $\text{DtN}$ map, the identity map and the dielectric function $\kappa$ can be written as diagonal matrices. Furthermore, the Fast Multipole method (FMM) can be used to compute the action of $\mathcal{V}$ on an arbitrary element of $W^{\ell_{\max}}$ up to a given accuracy in $\mathcal{O}(N)$ operations. Thus, all matrix-vector products involving the solution matrix obtained from the discretisation of $\mathcal{A}^*$ (and also $\mathcal{A}$) can be performed in $\mathcal{O}(N)$ up to a given accuracy (see also \cite{lindgren2018} for more details).

		Next, let us consider the statement B2), which requires us to perform a convergence analysis of the Krylov subspace solver used to solve the linear system arising from Galerkin discretisation \eqref{eq:Galerkina}. Here, we encounter the first obstacle: It is well known that the convergence behaviour of Krylov subspace solvers depends crucially on the spectrum of the operator that is being discretised, i.e., $\mathcal{A}^*$. The spectrum in turn depends on the continuity and coercivity/inf-sup constants of the operator, which themselves are obtained through a detailed numerical analysis. However, the analysis we presented in the first paper \cite{Hassan} (see also Theorem \ref{lem:well-posed}) derived explicit $N$-independent continuity and inf-sup constants only for the \emph{`modified' boundary integral operator} $\widetilde{\mathcal{A} }$ and used an indirect approach to obtain convergence rates for the Galerkin discretisation \eqref{eq:Galerkina}. Unfortunately, we have little explicit information on the spectrum of $\mathcal{A}^*$ and certainly no $N$-independent bounds on the largest and smallest eigenvalues.

		Our solution to this obstacle is to adopt once again an indirect route: Given a free charge $\sigma_f \in H^{-\frac{1}{2}}(\partial \Omega)$
		
		\begin{itemize}
			\item[Step 1)] We solve the `reduced' Galerkin discretisation \eqref{eq:Galerkin} to obtain $\lambda_{\ell_{\max}} \in W_0^{\ell_{\max}}(\partial \Omega)$ for some $\ell_{\max} \in \mathbb{N}$. This step will require the solution of a linear system using a Krylov subspace solver.
			
			\item[Step 2)] We make use of Lemma \ref{lem:indirect} to compute the solution $\nu_{\ell_{\max}} \in W^{\ell_{\max}}(\partial \Omega) $ to the original Galerkin discretisation \eqref{eq:Galerkina} using $\lambda_{\ell_{\max}}$. This step does not require an additional linear system to be solved.
		\end{itemize}
		
		Naturally, the advantage of this strategy is that the required spectral information of the operator $\widetilde{\mathcal{A} }$ is available from our prior analysis in \cite{Hassan}. 
		
		It remains to perform a convergence analysis of the Krylov subspace solver used to solve the linear system arising from the Galerkin discretisation \eqref{eq:Galerkin}. We now encounter the second difficulty: Since the boundary integral operator $\widetilde{\mathcal{A}}$ is obviously \emph{non-symmetric}, we solve the linear system associated with the Galerkin discretisation \eqref{eq:Galerkin} using the GMRES solver introduced by Saad and Schultz \cite{saad1986}. And the convergence behaviour of GMRES is, in general, considerably more complex than that of the well-known conjugate gradient (CG) method (see, e.g., \cite{TUM} for a comprehensive discussion). 
		
		\vspace{2mm}
		Broadly speaking, the GMRES solver can be applied to four qualitatively different solution matrices $\boldsymbol{A}$:
		
		\vspace{2mm}
		
		\begin{itemize}
			\item[M1)] Symmetric matrices, i.e., $\boldsymbol{A}=\boldsymbol{A}^{\rm T}$: In this case the convergence behaviour of GMRES depends only the spectrum of the solution matrix. Moreover, since the spectrum of the solution matrix is purely real, estimates of the residual at each iteration can be obtained using only the largest and smallest eigenvalues (see, e.g., \cite[Chapter 3]{Fischer}). 		\vspace{2mm}
			
			\item[M2)] Non-symmetric normal matrices, i.e., $\boldsymbol{A}\boldsymbol{A}^{\rm T}=\boldsymbol{A}^{\rm T}\boldsymbol{A}$: In principle, the convergence behaviour of GMRES in this case also depends only the spectrum of the solution matrix. However, since the spectrum of the solution matrix need not be real, useful bounds on the residual at each iteration cannot be obtained using only the eigenvalues with largest and smallest real parts. Instead one typically requires information on the distribution of the full spectrum in the complex plane (see, e.g., \cite[Section 4]{saad1981krylov}). 		\vspace{2mm}
			
			\item[M3)] Non-normal diagonalisable matrices, i.e., $\boldsymbol{A}= \boldsymbol{X}^{-1}\boldsymbol{D}\boldsymbol{X}$ for some diagonal matrix $\boldsymbol{D}$ and non-unitary matrix $\boldsymbol{X}$: In this case, the convergence behaviour of GMRES is typically less known since estimates on the residual at each iteration depend not only on the spectrum of solution matrix but also on the conditioning of the matrix appearing in its diagonalisation, which in general is unknown (see, e.g., \cite[Chapter 6]{Saad}).  		\vspace{2mm}
			
			\item[M4)] Non-diagonalisable matrices: In this case, the convergence behaviour of GMRES is significantly more difficult to analyse and there are only partial theoretical results (see, e.g., \cite{nachtigal1992fast} for an approach based on the pseudospectrum and \cite{eiermann2001geometric, eisenstat1983variational, elman1982iterative} for approaches based on the so-called \emph{field of values}).
		\end{itemize}
		
		\vspace{2mm}
		Consequently, in order to have a reasonable hope of analysing the convergence behaviour of GMRES applied to the linear system arising from the Galerkin discretisation \eqref{eq:Galerkin}, we must show that the matrix discretisation of $\widetilde{\mathcal{A} }$ is either \underline{normal} or \underline{diagonalisable}. Unfortunately, the matrix discretisation of $\widetilde{\mathcal{A} }$ is not normal since the operators $\mathcal{V}$ and $\text{DtN}$ do not commute in general. Consequently, we must prove that the matrix discretisation of $\widetilde{\mathcal{A} }$ is diagonalisable. Notice that since the convergence behaviour of GMRES depends on the conditioning of the matrix appearing in the diagonalisation, the exact choice of diagonalisation is vital. 
		
		\vspace{2mm}

		
		\subsection{The Diagonalisation of the operator $\widetilde{\mathcal{A}}$}\label{sec:3.1} ~
		
		\vspace{2mm}
		
		We will assume throughout this subsection that the discretisation parameter $\ell_{\max} \in \mathbb{N}$ is fixed. Additionally, in order to obtain a useful diagonalisation, we will assume in the sequel that the dielectric function $\kappa$ satisfies one of two conditions:
		
		\vspace{2mm}				
		\begin{enumerate}
			\item Either $\kappa_j > \kappa_0$ for all $j=1, \ldots, N$; \vspace{2mm}
			\item Or $\kappa_j < \kappa_0$ for all $j=1, \ldots, N$.
		\end{enumerate}
		
		\vspace{2mm}
		
		The necessity of the above assumption for the subsequent analysis will be discussed in Remark \ref{rem:New_1}. Let us point out however that from a practical point of view, this additional constraint is not too restrictive since it covers the two cases which typically arise in physical applications: The case of weakly polarisable particles embedded in a highly polarisable solvent and the case of highly polarisable particles embedded in a weakly polarisable medium. Examples of the former include teflon, PMMA, polyethylene or polypropylene particles in water (see \cite{lindgren2018dynamic}). Examples of the later include a wide range of Titanium-based oxides or certain highly polarisable polymer particles in air (see \cite{polymer}).

		\begin{definition}[Finite Dimensional Operators]\label{def:finite}~
			Let $\widetilde{\mathcal{A}} \colon \breve{H}^{\frac{1}{2}}(\partial \Omega) \rightarrow \breve{H}^{\frac{1}{2}}(\partial \Omega)$ be the `modified' boundary integral operator defined through Definition \ref{def:tildeA}, let $\mathcal{V} \colon H^{-\frac{1}{2}}(\partial \Omega) \rightarrow H^{\frac{1}{2}}(\partial \Omega)$ be the single layer boundary integral operator and let $\text{DtN} \colon H^{\frac{1}{2}}(\partial \Omega) \rightarrow H^{-\frac{1}{2}}(\partial \Omega)$ be the Dirichlet-to-Neumann map. Then we  define  the~finite  dimensional operators $\widetilde{\mathcal{A}}_{\ell_{\max}} \colon W_{0}^{\ell_{\max}}(\partial \Omega) \rightarrow W_{0}^{\ell_{\max}}(\partial \Omega) $, ${\mathcal{V}}_{\ell_{\max}} \colon W_{0}^{\ell_{\max}}(\partial \Omega) \rightarrow W_{0}^{\ell_{\max}}(\partial \Omega) $ and $\text{DtN}_{\ell_{\max}} \colon W_{0}^{\ell_{\max}}(\partial \Omega) \rightarrow W_{0}^{\ell_{\max}}(\partial \Omega) $ as
			\begin{align*}
				\widetilde{\mathcal{A} }_{\ell_{\max}}:= \mathbb{P}_{\ell_{\max}} \mathbb{P}_0^{\perp} \widetilde{\mathcal{A}}, \qquad {\mathcal{V} }_{\ell_{\max}}:=\mathbb{P}_{\ell_{\max}} \mathbb{P}_0^{\perp} {\mathcal{V}}, \qquad \text{and} \qquad \text{DtN}_{\ell_{\max}}:=\mathbb{P}_{\ell_{\max}}\text{DtN}. 
			\end{align*}
		\end{definition}
		
		%
		
		\begin{definition}\label{def:DtnK}
			We define the $L^2$-symmetric, positive definite, finite-dimensional operator $\text{DtN}_{\ell_{\max}} ^{\kappa} \colon W^{\ell_{\max}}_0(\partial \Omega)$ $\rightarrow~W^{\ell_{\max}}_0(\partial \Omega) $ as the mapping with the property that for all $\psi_{\ell_{\max}} \in W_0^{\ell_{\max}}(\partial \Omega)$ it holds that
			\begin{align*}
				\text{DtN}^{\kappa}_{\ell_{\max}} \psi_{\ell_{\max}}:=
				\left\vert\frac{\kappa- \kappa_0}{\kappa_0}\right\vert \text{DtN}_{\ell_{\max}}\psi_{\ell_{\max}}.
			\end{align*}
		\end{definition}
		
		Consider Definitions \ref{def:finite} and \ref{def:DtnK} and let $I_{\ell_{\max}}\colon W^{\ell_{\max}}_0(\partial \Omega) \rightarrow W^{\ell_{\max}}_0(\partial \Omega) $ denote the identity mapping. It follows that
		\begin{align}\label{eq:newA}
			\widetilde{\mathcal{A} }_{\ell_{\max}} = \begin{cases}
				I_{\ell_{\max}}  + \mathcal{V}_{\ell_{\max}} \text{DtN}^{\kappa}_{\ell_{\max}} \qquad \text{if } \kappa > \kappa_0,\\
				I_{\ell_{\max}}  - \mathcal{V}_{\ell_{\max}} \text{DtN}^{\kappa}_{\ell_{\max}} \qquad \text{if } \kappa < \kappa_0.
			\end{cases}
		\end{align}
		
		Equation \eqref{eq:newA} suggests a natural diagonalisation strategy: Since $\text{DtN}^{\kappa}_{\ell_{\max}}$ is symmetric positive definite, the associated square root operator $(\text{DtN}_{\ell_{\max}}^{\kappa})^{\frac{1}{2}} \colon W^{\ell_{\max}}_0(\partial \Omega) \rightarrow W^{\ell_{\max}}_0(\partial \Omega) $ can be defined and used to diagonalise~$\widetilde{\mathcal{A} }$.

		\begin{definition}\label{def:Asym}
			We define the $L^2$-symmetric, finite-dimensional operator $\widetilde{\mathcal{A}}^{\rm sym}_{\ell_{\max}} \colon W^{{\ell_{\max}}}_0(\partial \Omega) \rightarrow W_0^{{\ell_{\max}}}(\partial \Omega) $ as 
			\begin{align*}
				\widetilde{\mathcal{A}}^{\rm sym}_{\ell_{\max}}:= \begin{cases}I_{\ell_{\max}}+ (\text{DtN}_{\ell_{\max}}^{\kappa})^{\frac{1}{2}}\mathcal{V}_{\ell_{\max}}(\text{DtN}^{\kappa}_{\ell_{\max}})^{\frac{1}{2}}\quad &\text{ if } \kappa > \kappa_0,\\[1em]
					I_{\ell_{\max}}- (\text{DtN}_{\ell_{\max}}^{\kappa})^{\frac{1}{2}}\mathcal{V}_{\ell_{\max}}(\text{DtN}_{\ell_{\max}}^{\kappa})^{\frac{1}{2}} \quad &\text{ if } \kappa < \kappa_0.
				\end{cases}
			\end{align*}
		\end{definition}
		
		\begin{remark}\label{rem:similar}
			Consider Definition \ref{def:Asym}. We observe immediately that the finite-dimensional operators $\widetilde{\mathcal{A} }_{\ell_{\max}}$ and $\widetilde{\mathcal{A} }_{\ell_{\max}}^{\rm sym}$ are \emph{similar}, i.e., it holds that
			\begin{align}\label{eq:Hassan1}
				\widetilde{\mathcal{A} }_{\ell_{\max}} = (\text{DtN}^{\kappa}_{\ell_{\max}})^{-\frac{1}{2}} \widetilde{\mathcal{A} }_{\ell_{\max}}^{\rm sym}(\text{DtN}^{\kappa}_{\ell_{\max}})^{\frac{1}{2}}.
			\end{align}
			
			Although we have not obtained an explicit diagonalisation of $\widetilde{\mathcal{A} }_{\ell_{\max}}$, we have shown that it is similar to the $L^2$-symmetric operator $\widetilde{\mathcal{A} }_{\ell_{\max}}^{\rm sym}$. In fact our subsequent convergence analysis does not require an explicit diagonalisation and the relation \eqref{eq:Hassan1} will be sufficient.
		\end{remark}
		
		\begin{remark}\label{rem:New_1}
			Consider Definition \ref{def:Asym} and Remark \ref{rem:similar}. The key difficulty in extending such a symmetrisation strategy to the case of a general dielectric function is that if $\kappa$ is such that $\kappa \vert_{\partial \Omega_i} > \kappa_0$ and $\kappa \vert_{\partial \Omega_j} < \kappa_0$ for some $i, j \in \{1, \ldots, N\}$ then the operator $\widetilde{\mathcal{A}}_{\ell_{\max}}$ cannot be written in the straightforward form \eqref{eq:newA} using the operator~$\text{DtN}^{\kappa}_{\ell_{\max}}$. Of course, in this case we may choose to define (c.f., Definition \ref{def:DtnK}), the \emph{modified} finite-dimensional operator  
			\begin{align*}
				\widetilde{\text{DtN}}^{\kappa}_{\ell_{\max}} \psi_{\ell_{\max}}:=\frac{\kappa- \kappa_0}{\kappa_0} \text{DtN}_{\ell_{\max}}\psi_{\ell_{\max}},
			\end{align*}
			in which case, we can indeed write $\widetilde{\mathcal{A} }_{\ell_{\max}} = 
			I_{\ell_{\max}}  + \mathcal{V}_{\ell_{\max}} \widetilde{\text{DtN}}^{\kappa}_{\ell_{\max}}$, but the modified operator $\widetilde{\text{DtN}}^{\kappa}_{\ell_{\max}}$ used in this construction is \emph{indefinite} and therefore its square root cannot be defined.
		\end{remark}
		
		Since the finite-dimensional operator $\widetilde{\mathcal{A} }_{\ell_{\max}}^{\rm sym}$ will feature prominently in our subsequent convergence analysis, we must obtain bounds on the spectrum of this operator. Notice that $\widetilde{\mathcal{A} }^{\rm sym}_{\ell_{\max}}$ is $L^2$-symmetric, and thus has purely real eigenvalues. The next lemma gives bounds on the smallest and largest eigenvalues of this operator.
		
		\begin{lemma}\label{lem:eigenvalues}
			Let the symmetric, finite-dimensional operator $\widetilde{\mathcal{A} }_{\ell_{\max}}^{\rm sym} \colon W_0^{\ell_{\max}}(\partial \Omega) \rightarrow W_0^{\ell_{\max}}(\partial \Omega)$ be defined as in Definition \ref{def:Asym}, let the continuity constant $C_{\widetilde{\mathcal{A}}}$ of the operator $\widetilde{\mathcal{A}}$ be defined as in Theorem \ref{lem:well-posed}, let the constant $\alpha_0 \in \mathbb{R}$  be defined as
			\begin{align*}
				\alpha_0:= \begin{cases}
					1 \qquad &\text{ if } \kappa > \kappa_0,\\
					\min \frac{\kappa}{\kappa_0} \qquad &\text{ if } \kappa < \kappa_0,
				\end{cases} 
			\end{align*}
			and let $\mu_0 \in \mathbb{R}$ and $\mu_{\max} \in \mathbb{R}$ denote the smallest and largest eigenvalue respectively of $\widetilde{\mathcal{A} }_{\ell_{\max}}^{\rm sym}$. Then it holds that
			\begin{align*}
				\mu_0 \geq \alpha_0 \qquad \text{and} \qquad \mu_{\max} \leq C_{\widetilde{\mathcal{A}}}.
			\end{align*}
		\end{lemma}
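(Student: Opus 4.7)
The plan is to exploit the $L^2$-self-adjointness of $\widetilde{\mathcal{A}}_{\ell_{\max}}^{\rm sym}$, so that $\mu_0$ and $\mu_{\max}$ are the extrema of the Rayleigh quotient $(\widetilde{\mathcal{A}}_{\ell_{\max}}^{\rm sym} u, u)_{L^2}/\|u\|_{L^2}^2$ over $u \in W_0^{\ell_{\max}}(\partial \Omega)$. Using the $L^2$-self-adjointness of $(\text{DtN}_{\ell_{\max}}^{\kappa})^{1/2}$ and setting $\phi := (\text{DtN}_{\ell_{\max}}^{\kappa})^{1/2} u \in W_0^{\ell_{\max}}(\partial \Omega)$, Definition \ref{def:Asym} yields the identity
\begin{align*}
(\widetilde{\mathcal{A}}_{\ell_{\max}}^{\rm sym} u, u)_{L^2} = \|u\|_{L^2}^2 \pm (\mathcal{V}_{\ell_{\max}} \phi, \phi)_{L^2},
\end{align*}
with the sign chosen as in Definition \ref{def:Asym}. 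Moreover, since spherical harmonics form an orthogonal system for both the $L^2$ and $|||\cdot|||$ inner products, the projections $\mathbb{P}_{\ell_{\max}}$ and $\mathbb{P}_0^{\perp}$ are also $L^2$-self-adjoint, so that $(\mathcal{V}_{\ell_{\max}} \phi, \phi)_{L^2} = (\mathcal{V} \phi, \phi)_{L^2}$ for every $\phi \in W_0^{\ell_{\max}}(\partial \Omega)$.

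The upper bound $\mu_{\max} \leq C_{\widetilde{\mathcal{A}}}$ would follow at once from Remark \ref{rem:similar}: the similarity of $\widetilde{\mathcal{A}}_{\ell_{\max}}^{\rm sym}$ and $\widetilde{\mathcal{A}}_{\ell_{\max}}$ makes them share spectrum, so $\mu_{\max} \leq \rho(\widetilde{\mathcal{A}}_{\ell_{\max}})$. Since Theorem \ref{lem:well-posed} provides $|||\widetilde{\mathcal{A}}_{\ell_{\max}} u||| \leq C_{\widetilde{\mathcal{A}}} |||u|||$ for every $u \in W_0^{\ell_{\max}}(\partial \Omega)$, and the spectral radius is bounded above by any operator norm, we obtain $\mu_{\max} \leq C_{\widetilde{\mathcal{A}}}$.

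The lower bound in the case $\kappa > \kappa_0$ is immediate: here $\alpha_0 = 1$, the identity of the first paragraph becomes $(\widetilde{\mathcal{A}}_{\ell_{\max}}^{\rm sym} u, u)_{L^2} = \|u\|_{L^2}^2 + (\mathcal{V} \phi, \phi)_{L^2}$, and Property 1 guarantees $(\mathcal{V} \phi, \phi)_{L^2} = \langle \phi, \mathcal{V} \phi \rangle \geq 0$, so that $\mu_0 \geq 1 = \alpha_0$.

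The main obstacle of the proof is the lower bound in the case $\kappa < \kappa_0$, where I need $(\mathcal{V} \phi, \phi)_{L^2} \leq (1 - \alpha_0)\|u\|_{L^2}^2$. Writing $\text{DtN}_{\ell_{\max}}^{\kappa} = q \,\text{DtN}_{\ell_{\max}}$ with $q := (\kappa_0 - \kappa)/\kappa_0 > 0$ and noting that $q$ and $\text{DtN}_{\ell_{\max}}$ commute (both being block-diagonal across the spheres), one has $\|u\|_{L^2}^2 = ((\text{DtN}_{\ell_{\max}}^{\kappa})^{-1} \phi, \phi)_{L^2}$, and the pointwise bound $q \leq \max q = 1 - \alpha_0$ reduces the problem to the operator inequality
\begin{align*}
(\mathcal{V} \phi, \phi)_{L^2} \leq (\text{DtN}^{-1} \phi, \phi)_{L^2} \qquad \forall \phi \in W_0^{\ell_{\max}}(\partial \Omega).
\end{align*}
This is the core analytic estimate, which I would establish via two potential-theoretic ingredients. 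First, the Dirichlet energy identity $(\mathcal{V} \phi, \phi)_{L^2} = \int_{\mathbb{R}^3}|\nabla \mathcal{S} \phi|^2$, combined with $\int_{\Omega^-}|\nabla \mathcal{S} \phi|^2 = (\text{DtN} \mathcal{V} \phi, \mathcal{V} \phi)_{L^2}$ and the non-negativity of the exterior contribution, yields $(\text{DtN} \mathcal{V} \phi, \mathcal{V} \phi)_{L^2} \leq (\mathcal{V} \phi, \phi)_{L^2}$. Second, the Cauchy-Schwarz inequality applied to the positive semi-definite duality pairing induced by $\text{DtN}$ on $\breve{H}^{\frac{1}{2}}(\partial \Omega)$ (after projecting out the constant components lying in its kernel) gives $\langle \phi, \mathcal{V} \phi \rangle^2 \leq (\text{DtN}^{-1} \phi, \phi)_{L^2} \cdot (\text{DtN} \mathcal{V} \phi, \mathcal{V} \phi)_{L^2}$. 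Combining the two and dividing through by $(\mathcal{V} \phi, \phi)_{L^2} > 0$ yields the desired inequality.
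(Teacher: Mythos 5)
Your proof is correct, and its overall architecture matches the paper's: the upper bound comes from the similarity $\widetilde{\mathcal{A}}_{\ell_{\max}}^{\rm sym} = (\text{DtN}^{\kappa}_{\ell_{\max}})^{\frac{1}{2}}\widetilde{\mathcal{A}}_{\ell_{\max}}(\text{DtN}^{\kappa}_{\ell_{\max}})^{-\frac{1}{2}}$ together with the $|||\cdot|||$-boundedness of $\mathbb{P}_{\ell_{\max}}\widetilde{\mathcal{A}}$ from Theorem~\ref{lem:well-posed}, and the lower bound for $\kappa>\kappa_0$ follows immediately from the coercivity (indeed non-negativity) of $\mathcal{V}$. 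Your intermediate simplifications are sound: both $\mathbb{P}_0^{\perp}$ and $\mathbb{P}_{\ell_{\max}}$ are $L^2$-orthogonal projections here (the latter because the $|||\cdot|||$ and $L^2$ inner products are simultaneously diagonal in the spherical-harmonic basis), so $(\mathcal{V}_{\ell_{\max}}\phi,\phi)_{L^2}=(\mathcal{V}\phi,\phi)_{L^2}$ for $\phi\in W_0^{\ell_{\max}}(\partial\Omega)$, and the commutation of $q=(\kappa_0-\kappa)/\kappa_0$ with $\text{DtN}_{\ell_{\max}}$ is exactly as you say.

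Where you genuinely depart from the paper is in the case $\kappa<\kappa_0$. Both arguments ultimately rest on the operator inequality $\langle\phi,\mathcal{V}\phi\rangle \leq \langle\phi,\text{DtN}^{-1}\phi\rangle$ on the mean-zero subspace, but the paper gets it in one line by invoking the Calder\'on-type identity $\text{DtN}=\mathcal{W}+\text{DtN}\,\mathcal{V}\,\text{DtN}$ (\cite[Theorem 3.8.7]{Schwab}) and then discarding the non-negative $\mathcal{W}$ contribution. You instead rebuild this estimate from scratch: the Dirichlet energy identity $\langle\phi,\mathcal{V}\phi\rangle=\int_{\mathbb{R}^3}|\nabla\mathcal{S}\phi|^2 \geq \int_{\Omega^-}|\nabla\mathcal{S}\phi|^2 = \langle\text{DtN}\mathcal{V}\phi,\mathcal{V}\phi\rangle$, followed by Cauchy--Schwarz in the $\text{DtN}$ inner product with the pair $(\text{DtN}^{-1}\phi,\,\mathcal{V}\phi)$. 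This is equivalent --- the discarded exterior energy plays the role of the discarded $\mathcal{W}$ term --- but your version is self-contained and makes the underlying potential theory explicit, whereas the paper's is shorter by appealing to a ready-made textbook identity. A further cosmetic difference is that you phrase the whole argument through the Rayleigh quotient for the $L^2$-symmetric operator $\widetilde{\mathcal{A}}^{\rm sym}_{\ell_{\max}}$, while the paper tests the eigenvalue equation for $\widetilde{\mathcal{A}}_{\ell_{\max}}$ against $\text{DtN}^{\kappa}_{\ell_{\max}}\phi_0$; these are the same computation in different notation.
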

		\begin{proof}
			We first prove the bound for the largest eigenvalue $\mu_{\max} \in \mathbb{R}$. Let $\psi^{\ell_{\max}}_{\max} \in W_0^{\ell_{\max}}(\partial \Omega)$ denote the eigenfunction corresponding to $\mu_{\max}$. We then have by the similarity of the operators $\widetilde{\mathcal{A}}_{\ell_{\max}}$ and $\widetilde{\mathcal{A} }^{\rm sym}_{\ell_{\max}}$ (see Remark \ref{rem:similar}) that
			\begin{align*}
				\mu_{\max} \psi^{\ell_{\max}}_{\max}= \widetilde{\mathcal{A} }^{\rm sym}_{\ell_{\max}} \psi^{\ell_{\max}}_{\max} = (\text{DtN}^{\kappa}_{\ell_{\max}})^{\frac{1}{2}} \widetilde{\mathcal{A} }_{\ell_{\max}}(\text{DtN}^{\kappa}_{\ell_{\max}})^{-\frac{1}{2}}\psi^{\ell_{\max}}_{\max}.
			\end{align*} 
			
			Define the function $\phi_{\max}^{\ell_{\max}}:= (\text{DtN}^{\kappa})^{-\frac{1}{2}} \psi_{\max}^{\ell_{\max}}$. A straightforward calculation then yields
			\begin{align*}
				\mu_{\max} \phi_{\max}^{\ell_{\max}} = \widetilde{\mathcal{A}}_{\ell_{\max}} \phi_{\max}^{\ell_{\max}}, \quad \implies \quad
				\mu_{\max}^2 \big| \big|\big| \phi_{\max}^{\ell_{\max}} \big| \big|\big|^2 = \big| \big|\big| \widetilde{\mathcal{A}}_{\ell_{\max}} \phi_{\max}^{\ell_{\max}}\big|\big| \big|^2.
			\end{align*}
			
			Consequently, we obtain from Theorem \ref{lem:well-posed} that $\mu_{\max} \leq C_{\widetilde{\mathcal{A}}}$.
			
			Next, we prove the estimate for the smallest eigenvalue $\mu_0 \in \mathbb{R}$. Let $\psi^{\ell_{\max}}_{0} \in W_0^{\ell_{\max}}(\partial \Omega)$ denote the eigenfunction corresponding to $\mu_0$ and define ${\phi}_0^{\ell_{\max}}:=(\text{DtN}^{\kappa})^{-\frac{1}{2}}\psi^{\ell_{\max}}_0$. A similar calculation as above yields that
			\begin{align} \label{eq:New1}
				\mu_0 {\phi}_0^{\ell_{\max}} = \widetilde{\mathcal{A}}_{\ell_{\max}}  {\phi}_0^{\ell_{\max}}. 
			\end{align}
			
			Consider first the case $\kappa > \kappa_0$. We obtain from Equation \eqref{eq:New1} and the definition of $\widetilde{\mathcal{A}}_{\ell_{\max}}$ that
			\begin{align*}
				\mu_0 \left({\phi}_0^{\ell_{\max}}, \text{DtN}_{\ell_{\max}}^{\kappa}{\phi}_0^{\ell_{\max}}\right)_{L^2(\partial \Omega)}&=\left({\phi}_0^{\ell_{\max}}, \text{DtN}_{\ell_{\max}}^{\kappa}{\phi}_0^{\ell_{\max}}\right)_{L^2(\partial \Omega)}+\left(\mathcal{V}\text{DtN}_{\ell_{\max}}^{\kappa}{\phi}_0^{\ell_{\max}}, \text{DtN}_{\ell_{\max}}^{\kappa}{\phi}_0^{\ell_{\max}}\right)_{L^2(\partial \Omega)} \\
				&\geq\left({\phi}_0^{\ell_{\max}}, \text{DtN}_{\ell_{\max}}^{\kappa}{\phi}_0^{\ell_{\max}}\right)_{L^2(\partial \Omega)},
			\end{align*}
			where the second step uses the coercivity of $\mathcal{V}$ (see Property 1 in Section \ref{sec:2a}). This yields $\mu_0 \geq 1$
			
			Next, consider the case $\kappa < \kappa_0$. We obtain from Equation \eqref{eq:New1} and the definition of $\widetilde{\mathcal{A}}_{\ell_{\max}}$ that
			\begin{align}\label{eq:New2}
				(\mu_0-1) \left({\phi}_0^{\ell_{\max}}, \text{DtN}_{\ell_{\max}}^{\kappa}{\phi}_0^{\ell_{\max}}\right)_{L^2(\partial \Omega)}&=-\left(\mathcal{V}\text{DtN}_{\ell_{\max}}^{\kappa}{\phi}_0^{\ell_{\max}}, \text{DtN}_{\ell_{\max}}^{\kappa}{\phi}_0^{\ell_{\max}}\right)_{L^2(\partial \Omega)}.
			\end{align}
			
			We define the function ${\phi}_0^{\kappa, \ell_{\max}}:= \left\vert \frac{\kappa-\kappa_0}{\kappa_0} \right\vert \phi_0^{\ell_{\max}}$. Using the Calderon identity $\text{DtN}= W + \text{DtN}\mathcal{V}\text{DtN}$ (see, e.g., \cite[Theorem 3.8.7]{Schwab}) we see that
			\begin{align}\nonumber
				\left(\mathcal{V}\text{DtN}_{\ell_{\max}}^{\kappa}{\phi}_0^{\ell_{\max}}, \text{DtN}_{\ell_{\max}}^{\kappa}{\phi}_0^{\ell_{\max}}\right)_{L^2(\partial \Omega)} &=\left(\text{DtN}_{\ell_{\max}}{\phi}_0^{\kappa, \ell_{\max}},{\phi}_0^{\kappa, \ell_{\max}}\right)_{L^2(\partial \Omega)}-\left(\mathcal{W}{\phi}_0^{\kappa, \ell_{\max}}, {\phi}_0^{\kappa, \ell_{\max}}\right)_{L^2(\partial \Omega)}\\
				&\leq \left(\text{DtN}_{\ell_{\max}}{\phi}_0^{\kappa, \ell_{\max}}, {\phi}_0^{\kappa, \ell_{\max}}\right)_{L^2(\partial \Omega)}, \label{eq:Review_1}
			\end{align}
			where the inequality follows from the non-negativity of the hypersingular operator $\mathcal{W}$ (see, e.g., Property 2 in Section \ref{sec:2a}). Using this bound in Equation \eqref{eq:New2} then yields
			\begin{align*}
				(\mu_0-1) \sum_{j=1}^N  \left\vert\frac{\kappa_j-\kappa_0}{\kappa_0}\right\vert \left(\text{DtN}_{\ell_{\max}}{\phi}_0^{\ell_{\max}}, {\phi}_0^{\ell_{\max}}\right)_{L^2(\partial \Omega_j)} \geq -\sum_{j=1}^N  \left\vert\frac{\kappa_j-\kappa_0}{\kappa_0}\right\vert^2  \left(\text{DtN}_{\ell_{\max}}{\phi}_0^{\ell_{\max}}, {\phi}_0^{\ell_{\max}}\right)_{L^2(\partial \Omega_j)}.
			\end{align*}
			
			Simple calculus and the fact that $\kappa < \kappa_0$ by assumption, allows us to conclude that $\mu_0 \geq \min_{j=1, \ldots, N} \frac{\kappa_j}{\kappa_0}$.
		\end{proof}

		We now have all the ingredients necessary to analyse the convergence behaviour of GMRES applied to the linear system arising from the ``reduced'' Galerkin discretisation \eqref{eq:Galerkin}. \vspace{2mm}

		\subsection{GMRES Convergence Analysis and Solution Strategy.}\label{sec:3.2}~
	
		\vspace{2mm}
		
		We begin this subsection by fixing some additional notation. As a first step we would like to write explicitly the linear system arising from the Galerkin discretisation \eqref{eq:Galerkin}. In view of Definition \ref{def:Appromxation2} of our approximation space~$W_0^{\ell_{\max}}(\partial \Omega)$, the natural choice of basis functions are the local spherical harmonics on each sphere. 
		
		\begin{definition}[Choice of Basis]\label{def:Basis}~
			Let $\ell_{\max} \in \mathbb{N}$. We denote for each $j \in \{1, \ldots, N\}$ and all $\ell \in \{1, \ldots, \ell_{\max}\}$, $-\ell \leq m \leq \ell$ the function ${\mathcal{Y}}^j_{\ell m} \colon \partial \Omega \rightarrow \mathbb{R}$ defined as
			\begin{align*}
				{\mathcal{Y}}^j_{\ell m}(\bold{x}):= \begin{cases}\mathcal{Y}_{\ell m} \left(\frac{\bold{x}-\bold{x}_j}{\vert \bold{x}-\bold{x}_j\vert}\right) \quad &\text{for all } \bold{x} \in \partial \Omega_j,\\
					0 \quad &\text{otherwise},
				\end{cases}
			\end{align*}
			and we equip the approximation space $W_0^{\ell_{\max}}(\partial \Omega)$ with the basis $\{\mathcal{Y}^j_{\ell m}\}$.
		\end{definition}
		
		\noindent	\textbf{Notation:} Let $\ell_{\max} \in \mathbb{N}$. We will henceforth denote by $M:= N \cdot (\ell_{\max}+1)^2-N$, the dimension of the approximation space $W_0^{\ell_{\max}}(\partial \Omega)$. Notice that the dimension of the space $W^{\ell_{\max}}(\partial \Omega)$ is then given by $M+N$.

		\begin{remark}
			Consider Definition \ref{def:Basis} of the basis functions on $W_0^{\ell_{\max}}(\partial \Omega)$. These functions establish an isomorphism between $W_0^{\ell_{\max}}(\partial \Omega)$ and $\mathbb{R}^{M}$. Indeed, we associate an arbitrary $\psi \in W_0^{\ell_{\max}}(\partial \Omega)$ with $\boldsymbol{\psi} \in \mathbb{R}^{M}$ defined as
			\begin{align*}
				[\boldsymbol{\psi}_{i}]_{\ell}^m:= \left(\psi, \mathcal{Y}^j_{\ell m}\right)_{L^2(\partial \Omega_i)}, \qquad \text{for } i\in \{1, \ldots, N\}, \quad \ell \in \{1, \ldots, \ell_{\max}\} \quad \text{and } \hspace{1mm} |m| \leq \ell.
			\end{align*}
			
			Consequently, given functions in the space $W_0^{\ell_{\max}}(\partial \Omega)$, we will often refer to their vector representations in $\mathbb{R}^{M}$ and vice versa. Moreover, to facilitate identification we will frequently use bold symbols for the vector representations.
		\end{remark}

		\begin{definition}\label{def:Matrices}
			Let $\ell_{\max} \in \mathbb{N}$, let $\mathbb{Q}_{\ell_{\max}} \colon H^{-\frac{1}{2}}(\partial \Omega) \rightarrow W^{\ell_{\max}}(\partial \Omega) $ denote the orthogonal projection operator, and let $\sigma_f \in H^{-\frac{1}{2}}(\partial \Omega)$. Then 
			\begin{itemize}
				\item We define the right-hand side vector $\boldsymbol{\sigma_f} \in \mathbb{R}^{M}$ as
				\begin{align*}
					[\boldsymbol{\sigma_f}_{i}]_{\ell}^m := \frac{4\pi}{\kappa_0}\left(\mathcal{V}\mathbb{Q}_{\ell_{\max}}\sigma_f, \mathcal{Y}^i_{\ell m}\right)_{L^2(\partial \Omega_i)}, \qquad \text{for } i\in \{1, \ldots, N\}, \quad \ell \in \{1, \ldots, \ell_{\max}\} \quad \text{and } \hspace{1mm} |m| \leq \ell,
				\end{align*}
				
				\item We define the diagonal positive definite matrix $\boldsymbol{\rm DtN}^{\kappa} \in \mathbb{R}^{M\times M}$ as 
				\begin{align*}
					[\boldsymbol{\rm DtN}^{\kappa}_{ij}]_{\ell \ell'}^{m m'}:= \left( (\text{DtN}^{\kappa}_{\ell_{\max}})^{\frac{1}{2}}\mathcal{Y}^j_{\ell'm'}, \mathcal{Y}^i_{\ell m}\right)_{L^2(\partial \Omega_i)}, \hspace{3mm}\text{for } i, j\in \{1, \ldots, N\}, ~~ \ell, \ell' \in \{1, \ldots, \ell_{\max}\} \quad \text{and } \hspace{1mm} \vert m \vert, \vert m' \vert  \leq \ell,
				\end{align*}
				
				\item We define the solution matrix $\boldsymbol{A} \in \mathbb{R}^{M\times M}$ as
				\begin{align*}
					[\boldsymbol{A}_{ij}]_{\ell \ell'}^{m m'}:= \left(\widetilde{\mathcal{A}}_{\ell_{\max}}\mathcal{Y}^j_{\ell'm'}, \mathcal{Y}^i_{\ell m}\right)_{L^2(\partial \Omega_i)}, \quad \text{for } i, j\in \{1, \ldots, N\}, \quad \ell, \ell' \in \{1, \ldots, \ell_{\max}\} \quad \text{and } \hspace{1mm} \vert m \vert, \vert m' \vert  \leq \ell,
				\end{align*}
				
				\item We define the symmetrised solution matrix $\boldsymbol{A}^{\rm sym} \in \mathbb{R}^{M\times M}$ as
				\begin{align*}
					[\boldsymbol{A}^{\rm sym}_{ij}]_{\ell \ell'}^{m m'}:= \left(\widetilde{\mathcal{A}}^{\rm sym}_{\ell_{\max}}\mathcal{Y}^j_{\ell'm'}, \mathcal{Y}^i_{\ell m}\right)_{L^2(\partial \Omega_i)}, \hspace{5mm} \text{for } i, j\in \{1, \ldots, N\}, \quad \ell, \ell' \in \{1, \ldots, \ell_{\max}\} \quad\text{and } \hspace{1mm} \vert m \vert, \vert m' \vert  \leq \ell,
				\end{align*}
			\end{itemize}
		\end{definition}
		
		Two remarks are now in order.
		
		\begin{remark}\label{rem:Sym}
			Consider Definition \ref{def:Matrices}. A direct calculation shows that the matrices $\boldsymbol{A}$ and $\boldsymbol{A}^{\rm sym}$ are similar and we have
			\begin{align*}
				\boldsymbol{A}= (\boldsymbol{\rm DtN}^{\kappa})^{-1} \boldsymbol{A}^{\rm sym}\boldsymbol{\rm DtN}^{\kappa}.
			\end{align*}
		\end{remark}
		
		\begin{remark}
			Consider Definition \ref{def:Matrices}. We emphasise that the solution matrix $\boldsymbol{A}$ and the symmetrised solution matrix $\boldsymbol{A}^{\rm sym}$ are nothing else than the representation in the basis of local spherical harmonics functions of the finite-dimensional operators $\widetilde{\mathcal{A} }_{\ell_{\max}}$ and $\widetilde{\mathcal{A} }^{\rm sym}_{\ell_{\max}}$ defined through Definitions \ref{def:tildeA} and \ref{def:Asym} respectively. We can therefore write the ``reduced'' Galerkin discretisation \eqref{eq:Galerkin} as expected in matrix form.
		\end{remark}

		\noindent{\textbf{Matrix Formulation of the ``Reduced'' Galerkin Discretisation \eqref{eq:Galerkin}}}~
		
		Let $\ell_{\max} \in \mathbb{N}$, let $\sigma_f \in H^{-\frac{1}{2}}(\partial \Omega)$, and let the vector $\boldsymbol{\sigma_f} \in \mathbb{R}^{M}$ and the matrix $\boldsymbol{A} \in \mathbb{R}^{M \times M}$ be defined as in Definition \ref{def:Matrices}. Find a vector $\boldsymbol{\lambda} \in \mathbb{R}^{M}$ such that
		\begin{align}\label{eq:Matrix}
			\boldsymbol{A}\boldsymbol{\lambda}= \boldsymbol{\sigma_f}.
		\end{align}
		
		We are now ready to state our main convergence result.
		
		\begin{theorem}[Convergence Analysis for GMRES-based Strategy]\label{thm:3}~\\
			Let $\ell_{\max} \in \mathbb{N}$, let $\sigma_f \in H^{-\frac{1}{2}}(\partial \Omega)$, let $\nu_{\ell_{\max}} \in W^{\ell_{\max}}(\partial \Omega) $ be the unique solution to the Galerkin discretisation defined through Equation \eqref{eq:Galerkina} with right hand side given by $\sigma_f$, and let $\mathbb{P}_0^{\perp} \colon H^{\frac{1}{2}}(\partial \Omega) \rightarrow \breve{H}^{\frac{1}{2}}(\partial \Omega)$ and $\mathbb{Q}_{\ell_{\max}}\colon H^{\frac{1}{2}}(\partial \Omega) \rightarrow  W^{\ell_{\max}}(\partial \Omega) $ be the orthogonal projection operators. Then for every $\epsilon > 0$ there exists a function $\nu_{\ell_{\max}}^{\rm{approx}} \in W^{\ell_{\max}}(\partial \Omega) $ and a natural number $R_{\epsilon}>0$ that does not depend on the number of dielectric spheres $N$ such that $\nu_{\ell_{\max}}^{\rm{approx}} $ can be computed using at most $R_{\epsilon}$ iterations of GMRES and such that the following error estimate holds
			\begin{align*}
				\frac{||| \nu_{\ell_{\max}}^{\rm{approx}}  - \nu_{\ell_{\max}}|||^*}{||| \mathbb{P}_0^{\perp}\nu_{\ell_{\max}}|||^*+ \frac{4\pi}{\kappa_0}||| \mathbb{P}_0^{\perp}\mathbb{Q}_{\ell_{\max}}\sigma_f|||^*}\,  < \epsilon.
			\end{align*}
		\end{theorem}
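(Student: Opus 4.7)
The plan is to follow the indirect strategy outlined in Steps~1--2 of the introduction to Section \ref{sec:3}: apply GMRES with zero initial guess to the matrix equation \eqref{eq:Matrix} to produce an approximation $\lambda_{\ell_{\max}}^{\rm approx} \in W_0^{\ell_{\max}}(\partial \Omega)$ of the reduced Galerkin solution $\lambda_{\ell_{\max}}$ after $k$ iterations, and then, as in Lemma \ref{lem:indirect}, set
\[
\nu_{\ell_{\max}}^{\rm approx} := \frac{\kappa_0-\kappa}{\kappa_0}\text{DtN}\,\lambda_{\ell_{\max}}^{\rm approx} + \frac{4\pi}{\kappa_0}\mathbb{Q}_{\ell_{\max}}\sigma_f.
\]
Because the reconstruction involves only diagonal local operators applied to $\lambda_{\ell_{\max}}^{\rm approx}$, the total iteration count coincides with that of GMRES.

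For the GMRES analysis the key observation is that, by Remark \ref{rem:Sym}, $\boldsymbol{A} = (\boldsymbol{\rm DtN}^{\kappa})^{-1}\boldsymbol{A}^{\rm sym}\boldsymbol{\rm DtN}^{\kappa}$, and an orthogonal diagonalisation $\boldsymbol{A}^{\rm sym} = \boldsymbol{Q}^{\rm T}\boldsymbol{D}\boldsymbol{Q}$ yields the full diagonalisation $\boldsymbol{A} = \boldsymbol{Y}^{-1}\boldsymbol{D}\boldsymbol{Y}$ with $\boldsymbol{Y}:=\boldsymbol{Q}\boldsymbol{\rm DtN}^{\kappa}$ and hence $\kappa_2(\boldsymbol{Y})=\kappa_2(\boldsymbol{\rm DtN}^{\kappa})$. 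Lemma \ref{lem:eigenvalues} confines the spectrum of $\boldsymbol{A}$ to the real positive interval $[\alpha_0,C_{\tilde{\mathcal{A}}}]$, and the standard diagonalisable-GMRES estimate based on shifted and scaled Chebyshev polynomials therefore delivers
\[
\|\boldsymbol{A}\boldsymbol{\lambda}_k - \boldsymbol{\sigma_f}\|_2 \leq 2\,\kappa_2(\boldsymbol{\rm DtN}^{\kappa})\left(\frac{\sqrt{C_{\tilde{\mathcal{A}}}/\alpha_0}-1}{\sqrt{C_{\tilde{\mathcal{A}}}/\alpha_0}+1}\right)^{\!k}\|\boldsymbol{\sigma_f}\|_2.
\]
In the local spherical-harmonics basis, $\boldsymbol{\rm DtN}^{\kappa}$ is in fact literally diagonal, with entries proportional to $r_i^{2}\sqrt{|\kappa_i-\kappa_0|\,\ell/(\kappa_0 r_i)}$ on the block for sphere $i$ and mode $\ell \in \{1,\ldots,\ell_{\max}\}$; assumptions A1 and A3 then bound $\kappa_2(\boldsymbol{\rm DtN}^{\kappa})$ by a quantity depending only on $\ell_{\max}$, $r^{\infty}_{\pm}$ and $\kappa^{\infty}_{\pm}$, hence independent of $N$.

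The remaining step is to convert the Euclidean residual estimate into the target error bound. I would chain four $N$-independent ingredients, all valid on the finite-dimensional space $W_0^{\ell_{\max}}(\partial \Omega)$: (i) the norm equivalences $\|\boldsymbol{\psi}\|_2 \asymp \|\psi\|_{L^2(\partial \Omega)} \asymp |||\psi|||$, which follow from A1 and the explicit spectral representation of $\text{DtN}$ on each sphere; (ii) the discrete inf-sup estimate $\beta_{\tilde{\mathcal{A}}}\,|||\lambda||| \leq |||\widetilde{\mathcal{A}}_{\ell_{\max}}\lambda|||$ given by the second part of Theorem \ref{lem:well-posed}; (iii) the continuity bound $|||\mathcal{V}\sigma||| \lesssim |||\sigma|||^{*}$ following from Property 1, which combined with (i) yields $\|\boldsymbol{\sigma_f}\|_2 \lesssim \tfrac{4\pi}{\kappa_0}|||\mathbb{P}_0^{\perp}\mathbb{Q}_{\ell_{\max}}\sigma_f|||^{*}$; and (iv) the identity $|||\text{DtN}\tilde{\lambda}|||^{*} = |||\tilde{\lambda}|||$ on $\breve{H}^{\frac{1}{2}}(\partial\Omega)$, which together with A3 and the representation $\nu_{\ell_{\max}}^{\rm approx} - \nu_{\ell_{\max}} = \tfrac{\kappa_0-\kappa}{\kappa_0}\text{DtN}(\lambda_{\ell_{\max}}^{\rm approx}-\lambda_{\ell_{\max}})$ from Lemma \ref{lem:indirect} gives $|||\nu_{\ell_{\max}}^{\rm approx}-\nu_{\ell_{\max}}|||^{*} \lesssim |||\lambda_{\ell_{\max}}^{\rm approx}-\lambda_{\ell_{\max}}|||$. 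Composing (i)--(iv) with the Chebyshev residual estimate controls the numerator by $C(\text{rate})^{k}\tfrac{4\pi}{\kappa_0}|||\mathbb{P}_0^{\perp}\mathbb{Q}_{\ell_{\max}}\sigma_f|||^{*}$ with $C$ independent of $N$, while the denominator trivially satisfies $|||\mathbb{P}_0^{\perp}\nu_{\ell_{\max}}|||^{*} + \tfrac{4\pi}{\kappa_0}|||\mathbb{P}_0^{\perp}\mathbb{Q}_{\ell_{\max}}\sigma_f|||^{*} \geq \tfrac{4\pi}{\kappa_0}|||\mathbb{P}_0^{\perp}\mathbb{Q}_{\ell_{\max}}\sigma_f|||^{*}$. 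Setting $R_\epsilon := \lceil \log(C/\epsilon)/\log(1/\text{rate})\rceil$ is therefore enough, and is manifestly $N$-independent.

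The principal technical hurdle I foresee is establishing the $N$-independent bound on $\kappa_2(\boldsymbol{\rm DtN}^{\kappa})$, and relatedly the fact that the GMRES analysis can be carried out in the diagonalisable regime at all: the indispensable role of the sign hypotheses on $\kappa$ from Section \ref{sec:3.1} is exactly here, since without them the symmetrisation of Definition \ref{def:Asym} (and hence the similarity of $\boldsymbol{A}$ to a symmetric matrix) is unavailable, leaving the analysis in the much harder non-diagonalisable regime M4. A secondary bookkeeping task is to make each of the norm equivalences and inverse-type inequalities on $W_0^{\ell_{\max}}(\partial \Omega)$ fully explicit with constants depending only on $\ell_{\max}$, $r^{\infty}_{\pm}$ and $\kappa^{\infty}_{\pm}$.
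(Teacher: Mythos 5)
Your proposal follows essentially the same route as the paper: the indirect strategy via Lemma~\ref{lem:indirect}, the similarity $\boldsymbol{A}=(\boldsymbol{\rm DtN}^{\kappa})^{-1}\boldsymbol{A}^{\rm sym}\boldsymbol{\rm DtN}^{\kappa}$ combined with Lemma~\ref{lem:eigenvalues} and the Chebyshev min-max bound, the explicit computation of $\kappa_2(\boldsymbol{\rm DtN}^{\kappa})$ from the diagonal structure of $\boldsymbol{\rm DtN}^{\kappa}$ in the spherical-harmonics basis, and the $N$-independent norm equivalences under \textbf{A1}--\textbf{A3}. The only cosmetic difference is in the final bookkeeping: you lower-bound the denominator by the $\frac{4\pi}{\kappa_0}|||\mathbb{P}_0^{\perp}\mathbb{Q}_{\ell_{\max}}\sigma_f|||^{*}$ term alone and bound the initial residual directly from the right-hand-side data, whereas the paper bounds $\|\boldsymbol{r}_0\|_{\ell^2}$ via $C_{\widetilde{\mathcal{A}}}$ and $|||\lambda_{\ell_{\max}}|||$ and then rewrites $|||\lambda_{\ell_{\max}}|||$ using the full two-term denominator; both routes give an $N$-independent $R_{\epsilon}$.
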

		\begin{proof}
			
			We begin by defining the affine transformation $\mathcal{B} \colon W_0^{\ell_{\max}}(\partial \Omega) \rightarrow W^{\ell_{\max}}(\partial \Omega) $ as the mapping with the property that for all $\psi_{\ell_{\max}} \in W_0^{\ell_{\max}}(\partial \Omega)$ it holds that
			\begin{align}\label{eq:thm31}
				\mathcal{B} \psi_{\ell_{\max}}= \frac{\kappa_0-\kappa}{\kappa_0} \text{DtN} \psi_{\ell_{\max}} + \frac{4\pi}{\kappa_0}\mathbb{Q}_{\ell_{\max}}\sigma_f.
			\end{align}
			
			Let $\lambda_{\ell_{\max}} \in W^{\ell_{\max}}_0(\partial \Omega) $ denote the solution to the ``reduced' Galerkin discretisation \eqref{eq:Galerkin}. It follows from Lemma \ref{lem:indirect} that $\mathcal{B} \lambda_{\ell_{\max}}  = \nu_{\ell_{\max}}$. Thus for any function $\psi_{\ell_{\max}}\in W^{\ell_{\max}}_0(\partial \Omega) $ it holds that
			\begin{align*}
				||| \nu_{\ell_{\max}} - \mathcal{B} \psi_{\ell_{\max}} |||^* &= \Big|\Big|\Big|  \frac{\kappa_0-\kappa}{\kappa_0} \text{DtN} \Big(\lambda_{\ell_{\max}}- \psi_{\ell_{\max}}\Big) \Big|\Big|\Big|^*\\
				&\leq \max \Big \vert \frac{\kappa_0-\kappa}{\kappa_0}\Big \vert ||| \lambda_{\ell_{\max}}- \psi_{\ell_{\max}} |||.
			\end{align*}
			
			Let $\epsilon> 0$ be fixed and let $\widetilde{\epsilon}:=  \frac{\epsilon}{\max \left \vert \frac{\kappa_0-\kappa}{\kappa_0}\right \vert}$. It follows that 
			\begin{align}\label{eq:thm32}
				\vert ||| \lambda_{\ell_{\max}}- \psi_{\ell_{\max}} ||| < \widetilde{\epsilon} \implies ||| \nu_{\ell_{\max}} - \mathcal{B} \psi_{\ell_{\max}} |||^* < \epsilon.
			\end{align}

			We will therefore show that there exists a natural number $R_{{\epsilon}}$ that is independent of the number $N$ of open balls such that one can compute a function $\psi_{\ell_{\max}} \in W_0^{\ell_{\max}}(\partial \Omega)$ using at most $R_{\epsilon}$ iterations of GMRES and such that
			\begin{align}\label{eq:thm33}
				\vert ||| \lambda_{\ell_{\max}}- \psi_{\ell_{\max}} ||| < \widetilde{\epsilon}.
			\end{align}
			
			This will allow us to define $\nu_{\ell_{\max}}^{\rm approx}:=\mathcal{B}\psi_{\ell_{\max}}$ and hence complete the proof.
			
			Consider the matrix Equation \eqref{eq:Matrix}. Let $\boldsymbol{\lambda}_0 \in \mathbb{R}^M$ be some initialisation and let $\boldsymbol{\lambda}_k \in \mathbb{R}^{M}, ~ k \in \mathbb{N}$ denote the $k^{\rm{th}}$ iterate generated by GMRES applied to this linear system. Next, let $\boldsymbol{r}_k:= \boldsymbol{\sigma_f}- \boldsymbol{A}  \boldsymbol{\lambda}_k, ~ k \in \mathbb{N}_0$ denote the $k^{\text{th}}$ residual. It is well known (see, e.g., \cite{Greenbaum} or \cite{TUM}) that for all natural numbers $k \in \mathbb{N}$ the GMRES residual $\boldsymbol{r}_k$ satisfies
			\begin{align}\label{eq:thm34}
				\Vert \boldsymbol{r}_k\Vert_{\ell^2}= \min_{p \in \pi_k} \big\Vert p\left(\boldsymbol{A} \right)\boldsymbol{r}_0\big\Vert_{\ell^2} \leq \min_{p \in \pi_k} \big\Vert p\left(\boldsymbol{A}\right)\big\Vert_{2}\big\Vert \boldsymbol{r}_0\big\Vert_{\ell^2},
			\end{align}
			where $\pi_k$ denotes the set of all polynomials $p \colon \mathbb{R}\rightarrow \mathbb{R}$ of degree at most $k$ such that $p(0)=1$, $\Vert \cdot \Vert_{\ell^2}$ denotes the standard Euclidean norm in $\mathbb{R}^M$ and $\Vert \cdot \Vert_{2}$ denotes the matrix norm induced by the standard Euclidean norm in $\mathbb{R}^M$. 
			
			Using Remark \ref{rem:Sym} and the fact that the matrix $\boldsymbol{A}^{\rm sym}$ is symmetric, the bound \eqref{eq:thm34} can be simplified (see, e.g., \cite{TUM}) to obtain
			\begin{align*}
				\Vert \boldsymbol{r}_k\Vert_{\ell^2}&\leq \min_{p \in \pi_k} \big\Vert p\left(\boldsymbol{A}^{\rm sym}\right)\big\Vert_{2}\big \Vert (\boldsymbol{\rm DtN}^{\kappa})^{-1}\big \Vert_{2} \big \Vert \boldsymbol{\rm DtN}^{\kappa}\big \Vert_{2}\big\Vert \boldsymbol{r}_0\big\Vert_{\ell^2},\\
				&\leq \min_{p \in \pi_k} \max_n\big\vert p\left(\mu_n\right)\big\vert\big \Vert (\boldsymbol{\rm DtN}^{\kappa})^{-1}\big \Vert_{2} \big \Vert \boldsymbol{\rm DtN}^{\kappa}\big \Vert_{2}\big\Vert \boldsymbol{r}_0\big\Vert_{\ell^2},
			\end{align*}		
			where $\mu_n, n =0, \ldots, M-1$ denote the (ascendingly ordered) eigenvalues of the matrix $\boldsymbol{A}^{\rm sym}$. We now simplify each term in this estimate.
			
			Due to Lemma \ref{lem:eigenvalues} we know that all eigenvalues of $\boldsymbol{A}^{\rm sym}$ are positive and we have explicit bounds on the smallest eigenvalue $\mu_0$ and the largest eigenvalue $\mu_{M-1}$ of this matrix. Consequently, we can employ the standard approach of using Chebyshev polynomials of the first kind to estimate the min-max problem (see, e.g., \cite[Chapter 3]{Fischer}). We thus obtain
			\begin{align*}
				\min_{p \in \pi_k} \max_n\big\vert p\left(\mu_n\right)\big\vert \leq 2\left(\frac{\sqrt{\frac{C_{\widetilde{\mathcal{A}}}}{\alpha_0} } -1}{\sqrt{\frac{C_{\widetilde{\mathcal{A}}}}{\alpha_0} } +1}\right)^k,
			\end{align*}
			where $\alpha_0, C_{\widetilde{\mathcal{A}}}$ are bounds on the smallest and largest eigenvalues of $\boldsymbol{A}^{\rm sym}$ respectively, as computed in Lemma~\ref{lem:eigenvalues}.
			
			Next, note that by definition, each non-zero entry of the diagonal positive definite matrix $\boldsymbol{\rm DtN}^{\kappa}$ is given by
			\begin{align*}
				[\boldsymbol{\rm DtN}^{\kappa}_{jj}]_{\ell \ell}^{m m} &= \left\vert \frac{\kappa_j-\kappa_0}{\kappa_0}\right\vert^{\frac{1}{2}}\left((\text{DtN}_{\ell_{\max}})^{\frac{1}{2}}\mathcal{Y}^j_{\ell m}, \mathcal{Y}^j_{\ell m}\right)_{L^2(\partial \Omega_j)}\\
				&= \left\vert \frac{\kappa_j-\kappa_0}{\kappa_0}\right\vert^{\frac{1}{2}} r_j^2\sqrt{\frac{{\ell}}{{r_j}}}, \hspace{2cm} \text{for } j\in \{1, \ldots, N\}, \quad \ell \in \{1, \ldots, \ell_{\max}\} \quad \text{and } -\ell \leq m \leq \ell.
			\end{align*}
			
			If we denote by $\chi_0, \chi_{\max}\in \mathbb{R}$ the smallest and largest entry respectively of the matrix~$\boldsymbol{\rm DtN}^{\kappa}$ we have
			\begin{align*}
				\chi_0 = \min \left \vert \frac{\kappa-\kappa_0}{\kappa_0}\right\vert^{\frac{1}{2}} \min_{j=1\ldots, N} r_j^{\frac{3}{2}}, \hspace{1cm} \text{and} \hspace{1cm} \chi_{\max} = \max \left \vert \frac{\kappa-\kappa_0}{\kappa_0}\right \vert^{\frac{1}{2}}	\max_{j=1\ldots, N} r_j^{\frac{3}{2}}\sqrt{\ell_{\max}}.
			\end{align*}
			
			We thus obtain that
			\begin{align*}
				\big \Vert (\boldsymbol{\rm DtN}^{\kappa})^{-1}\big \Vert_{2}= \frac{1}{\chi_0} =\frac{1}{\min \left \vert \frac{\kappa-\kappa_0}{\kappa_0}\right \vert^{\frac{1}{2}}\min_{j=1\ldots, N} r_j^{\frac{3}{2}}}\hspace{0.5cm} \text{and} \hspace{0.5cm} \big \Vert \boldsymbol{\rm DtN}^{\kappa}\big \Vert_{2}=\chi_{\max}=\max \left \vert \frac{\kappa-\kappa_0}{\kappa_0}\right \vert^{\frac{1}{2}}\max_{j=1\ldots, N} r_j^{\frac{3}{2}}\sqrt{\ell_{\max}}.
			\end{align*}

			The estimate \eqref{eq:thm34} can therefore be bounded as
			\begin{align*}
				\Vert \boldsymbol{r}_k\Vert_{\ell^2} \leq 2\sqrt{\ell_{\max}}\left(\frac{\sqrt{\frac{C_{\widetilde{\mathcal{A}}}}{\alpha_0} } -1}{\sqrt{\frac{C_{\widetilde{\mathcal{A}}}}{\alpha_0} } +1}\right)^k  \left(\frac{\max \left \vert {\kappa-\kappa_0}\right \vert\max_{j=1\ldots, N} r_j^{{3}}}{\min \left \vert {\kappa-\kappa_0}\right \vert\min_{j=1\ldots, N} r_j^{{3}}}\right)^{\frac{1}{2}}\Vert \boldsymbol{r}_0\Vert_{\ell^2}.
			\end{align*}
			
			Next, for each $k \in \mathbb{N}_0$ we denote by $\lambda^{\ell_{\max}}_k \in W_0^{\ell_{\max}}(\partial \Omega)$ the function associated with the vector $\boldsymbol{\lambda}_k \in \mathbb{R}^{M}$. Using the fact that $\boldsymbol{\sigma_f}= \boldsymbol{A}\boldsymbol{\lambda}$ and Definition~\ref{def:Matrices}, we obtain by a direct calculation that
			\begin{align*}
				\Vert \boldsymbol{r}_k\Vert_{\ell^2}  &= \Vert \boldsymbol{A}\boldsymbol{\lambda}-\boldsymbol{A} \boldsymbol{\lambda}_k\Vert_{\ell^2} \geq \min_{j=1, \ldots, N} r_j \left \Vert  \widetilde{\mathcal{A}}_{\ell_{\max}} \lambda_{\ell_{\max}}- \widetilde{\mathcal{A}}_{\ell_{\max}}\lambda_k^{\ell_{\max}}\right\Vert_{L^2(\partial \Omega)}\\
				&\geq \left(\frac{\min_{j=1, \ldots, N} r_j^{{3}}}{\ell_{\max}}\right)^{\frac{1}{2}} \Big|\Big|\Big| \widetilde{\mathcal{A}}_{\ell_{\max}} \lambda_{\ell_{\max}}- \widetilde{\mathcal{A}}_{\ell_{\max}}\lambda_k^{\ell_{\max}}\Big|\Big|\Big| \geq \beta_{\widetilde{\mathcal{A}}}\left(\frac{\min_{j=1, \ldots, N} r_j^{{3}}}{\ell_{\max}}\right)^{\frac{1}{2}} \Big|\Big|\Big| \lambda_{\ell_{\max}}- \lambda_k^{\ell_{\max}}\Big|\Big|\Big|,
			\end{align*}
			where the last step follows from Theorem \ref{lem:well-posed}. In a similar fashion if we pick the initialisation $\boldsymbol{\lambda}_0 \equiv 0$ we have
			\begin{align*}
				\Vert \boldsymbol{r}_0\Vert_{\ell^2} \leq C_{\widetilde{\mathcal{A} }}\max_{j=1\ldots, N} r^{\frac{3}{2}}_j ||| \lambda_{\ell_{\max}}|||.
			\end{align*}

			It therefore follows that
			\begin{align}\label{eq:thm35}
				\Big|\Big|\Big| \lambda_{\ell_{\max}}- \lambda_k^{\ell_{\max}}\Big|\Big|\Big| \leq \frac{2C_{\widetilde{\mathcal{A} }}\ell_{\max} }{\beta_{\tilde{\mathcal{A}}}}\frac{\max_{j=1\ldots, N} r_j^{{3}}}{\min_{j=1, \ldots, N} r_j^3}\left(\frac{\sqrt{\frac{C_{\widetilde{\mathcal{A}}}}{\alpha_0} } -1}{\sqrt{\frac{C_{\widetilde{\mathcal{A}}}}{\alpha_0} } +1}\right)^k  \left(\frac{\max \left \vert {\kappa-\kappa_0}\right \vert }{\min \left \vert {\kappa-\kappa_0}\right \vert}\right)^{\frac{1}{2}} ||| \lambda_{\ell_{\max}}|||.
			\end{align}
			
			Finally, in view of Equation \eqref{eq:thm31} we observe that
			\begin{align*}
				||| \lambda_{\ell_{\max}}||| &= \Big|\Big|\Big| \frac{\kappa_0}{\kappa_0-\kappa} \text{DtN}^{-1}\mathbb{P}_0^{\perp}\Big(\nu_{\ell_{\max}} - \frac{4\pi}{\kappa_0}\mathbb{Q}_{\ell_{\max}}\sigma_f\Big)\Big|\Big|\Big| = \Big|\Big|\Big| \frac{\kappa_0}{\kappa_0-\kappa} \mathbb{P}_0^{\perp}\Big(\nu_{\ell_{\max}} - \frac{4\pi}{\kappa_0}\mathbb{Q}_{\ell_{\max}}\sigma_f\Big)\Big|\Big|\Big|^*\\
				&\leq \max \left \vert \frac{\kappa_0}{\kappa-\kappa_0}\right \vert \left(||| \mathbb{P}_0^{\perp}\nu_{\ell_{\max}}|||^*+ \frac{4\pi}{\kappa_0}||| \mathbb{P}_0^{\perp}\mathbb{Q}_{\ell_{\max}}\sigma_f|||^*\right).
			\end{align*}
			
			Therefore, if we define the constant $\Upsilon_{\rm GMRES}:= \frac{2C_{\widetilde{\mathcal{A} }} }{\beta_{\tilde{\mathcal{A}}}}\frac{\max_{j=1\ldots, N} r_j^{{3}}}{\min_{j=1, \ldots, N} r_j^3} \left(\frac{\max \left \vert {\kappa-\kappa_0}\right \vert }{\min \left \vert {\kappa-\kappa_0}\right \vert}\right)^{\frac{1}{2}} \max \left \vert \frac{\kappa_0}{\kappa-\kappa_0}\right \vert $, then the bound \eqref{eq:thm35} can be written succinctly as
			\begin{align*}
				\Big|\Big|\Big| \lambda_{\ell_{\max}}- \lambda_k^{\ell_{\max}}\Big|\Big|\Big| \leq  \Upsilon_{\rm GMRES}\ell_{\max} \left(\frac{\sqrt{\frac{C_{\widetilde{\mathcal{A}}}}{\alpha_0} } -1}{\sqrt{\frac{C_{\widetilde{\mathcal{A}}}}{\alpha_0} } +1}\right)^k \left(||| \mathbb{P}_0^{\perp}\nu_{\ell_{\max}}|||^*+ \frac{4\pi}{\kappa_0}||| \mathbb{P}_0^{\perp}\mathbb{Q}_{\ell_{\max}}\sigma_f|||^*\right).
			\end{align*}
			
			Consequently, we can define the natural number $R_{\epsilon}$ as
			\begin{align}\label{eq:iterations}
				R_{\epsilon}:=\ceil*{\frac{\log\left(\frac{\widetilde{\epsilon}}{\ell_{\max}\Upsilon_{\rm GMRES}}\right)}{\log\left(\frac{\sqrt{\frac{C_{\widetilde{\mathcal{A}}}}{\alpha_0} } -1}{\sqrt{\frac{C_{\widetilde{\mathcal{A}}}}{\alpha_0} } +1}\right)}}.
			\end{align}
			
			We then obtain
			\begin{align*}
				\Big|\Big|\Big| \lambda_{\ell_{\max}}- \lambda_{R_{\epsilon}}^{\ell_{\max}}\Big|\Big|\Big| < \widetilde{\epsilon} \left(||| \mathbb{P}_0^{\perp}\nu_{\ell_{\max}}|||^*+ \frac{4\pi}{\kappa_0}||| \mathbb{P}_0^{\perp}\mathbb{Q}_{\ell_{\max}}\sigma_f|||^*\right),
				\intertext{which yields using Inequality \eqref{eq:thm32}}
				\frac{||| \nu_{\ell_{\max}} - \mathcal{B} \lambda_{R_{\epsilon}}^{\ell_{\max}}|||^* }{||| \mathbb{P}_0^{\perp}\nu_{\ell_{\max}}|||^*+ \frac{4\pi}{\kappa_0}||| \mathbb{P}_0^{\perp}\mathbb{Q}_{\ell_{\max}}\sigma_f|||^*} < \epsilon. \qquad\hphantom{+}
			\end{align*}
			
			Defining $\nu_{\ell_{\max}}^{\rm approx}:=\mathcal{B}\lambda_{R_{\epsilon}}^{\ell_{\max}}$ therefore completes the proof. 
		\end{proof}			
		
		Some explanatory remarks are now in order.
		
		\begin{remark}\label{rem:thm3}
			Consider the setting and proof of Theorem \ref{thm:3}. In practice, the function $\nu_{\ell_{\max}}^{\rm approx}:=\mathcal{B}\lambda_{R_{\epsilon}}^{\ell_{\max}}~\in W^{\ell_{\max}}(\partial \Omega) $ is represented as a vector $\boldsymbol{\nu}^{\rm approx} \in \mathbb{R}^{M+ N}$. This can be done as follows:
			
			First, let $\boldsymbol{\lambda}_{R_{\epsilon}} \in \mathbb{R}^M$, i.e., the $R_{\epsilon}^{\rm th}$ GMRES iterate, be  the vector representation of $\lambda_{R_{\epsilon}}^{\ell_{\max}} \in W^{\ell_{\max}}(\partial \Omega) $. Next, inspired by Definition \ref{def:Basis}, we equip the space $W^{\ell_{\max}}(\partial \Omega) $ with a basis of local spherical harmonics functions $\left\{\mathcal{Y}^i_{\ell m}\right\}$, ~$i\in \{1, \ldots, N\}$ and $\ell \in \{0, \ldots, \ell_{\max}\},~ -\ell \leq m \leq \ell$. Using the notation $[\boldsymbol{\lambda}_{R_{\epsilon}}^i]_{\ell}^m$, ~$i\in \{1, \ldots, N\}$ and $\ell \in \{1, \ldots, \ell_{\max}\},~ -\ell \leq m \leq \ell$ to denote the entries of $\boldsymbol{\lambda}_{R_{\epsilon}} $:
			
			\begin{itemize}
				\item We define the vector $\boldsymbol{\Psi} \in \mathbb{R}^{M+N}$ as
				\begin{align*}
					[\boldsymbol{\Psi}_{i}]_{\ell}^m:= \begin{cases}
						0 &\text{ for } i\in \{1, \ldots, N\}, \quad \ell, m =0,\\
						\frac{\kappa_0-\kappa_i}{\kappa_0}[\boldsymbol{\lambda}_{R_{\epsilon}}^i]_{\ell}^m \big(\text{DtN}\mathcal{Y}_{\ell m}^i, \mathcal{Y}_{\ell m}^i\big)&\text{ for } i\in \{1, \ldots, N\}, \quad \ell \in \{1, \ldots, \ell_{\max}\} \quad \text{and } \hspace{1mm} |m| \leq \ell.
					\end{cases}
				\end{align*}
				
				\item We define the vector $\boldsymbol{\sigma_f^Q} \in \mathbb{R}^{M+N}$ as
				\begin{align*}
					[\boldsymbol{\sigma_f^Q}_{i}]_{\ell}^m:= \frac{4\pi}{\kappa_0}\left(\sigma_f, \mathcal{Y}^i_{\ell m}\right)_{L^2(\partial \Omega_i)}, \qquad \hspace{0.9cm}\text{for } i\in \{1, \ldots, N\}, \quad \ell \in \{0, \ldots, \ell_{\max}\} \quad \text{and } \hspace{1mm} |m| \leq \ell,
				\end{align*}
			\end{itemize}
			
			It then follows from Definition \eqref{eq:thm31} of the affine map $\mathcal{B}$ that the vector representation $\boldsymbol{\nu}^{\rm approx} \in \mathbb{R}^{M+ N}$ of $\nu_{\ell_{\max}}^{\rm approx}$ is simply given by
			\begin{align}
				\boldsymbol{\nu}^{\rm approx}:=\boldsymbol{\Psi}+ \boldsymbol{\sigma_f^Q}.
			\end{align}
		\end{remark}
		
		\begin{remark}\label{rem:effect}
			Consider the proof of Theorem \ref{thm:3}. Equation \eqref{eq:iterations} describes the behaviour of our bound $R_{\epsilon}$ on the number of GMRES iterations required to obtain an approximate solution with relative error smaller than~$\epsilon$. In particular, we observe that
			\begin{itemize}
				\item $R_{\epsilon}$ grows moderately as $\log(\ell_{\max})$ for increasing $\ell_{\max}$. Here, $\ell_{\max}$ is the discretisation parameter for the approximation space. As discussed in Section \ref{sec:4} on numerical results, we typically pick $\ell_{\max} \in~\{5, \ldots, 20\}$ so we do not observe growth in the number of linear solver iterations for increasing $\ell_{\max}$ in practical numerical simulations.
				
				\item $R_{\epsilon}$ grows as moderately $\log(\Upsilon_{\rm GMRES})$ for increasing $\Upsilon_{\rm GMRES}$. Here, $\Upsilon_{\rm GMRES}$ is the constant defined in the proof of Theorem \ref{thm:3} and depends on geometrical parameters such as the radii of the spheres and the dielectric constants, and the continuity and inf-sup constant of the operator $\widetilde{\mathcal{A} }$. 
				
				\item $R_{\epsilon}$ grows as $\sqrt{\frac{C_{\widetilde{\mathcal{A}}}}{\alpha_0}}$. Here, $C_{\widetilde{\mathcal{A}}}$ and $\alpha_0$ are bounds on the largest and smallest eigenvalues respectively of the symmetric, finite-dimensional operator $\widetilde{\mathcal{A}}^{\rm sym}_{\ell_{\max}}$ (see Lemma \ref{lem:eigenvalues}) and are given by
				\begin{align*}
					\alpha_0 = \begin{cases}
						1 \quad &\text{if } \kappa > \kappa_0,\\
						\min \frac{\kappa}{\kappa_0} \quad &\text{if } \kappa < \kappa_0, \end{cases} \qquad C_{\widetilde{\mathcal{A} }} = 1 + \max\left\vert \frac{\kappa-\kappa_0}{\kappa_0}\right\vert \frac{ c_{\rm equiv} }{\sqrt{c_{\mathcal{V} } } }.				
				\end{align*}
			\end{itemize}

			Consequently, we would expect $R_{\epsilon}$ to be large if
			\begin{itemize}
				\item $\kappa < \kappa_0$ and $\min \frac{\kappa}{\kappa_0}$ is very small;
				\item $\kappa > \kappa_0$ and $\max \frac{\kappa}{\kappa_0}$ is very large;
				\item The coercivity constant $c_{\mathcal{V}}$ is very small. We have shown in the first paper \cite[Lemma 4.7]{Hassan} that $c_{\mathcal{V}} = \mathcal{O}(\delta)$ for small $\delta$, where $\delta$ is the minimum inter-sphere separation distance.
			\end{itemize}
		\end{remark}

		\begin{remark}\label{rem:errorvsresidual}
			Theorem \ref{thm:3} show that one can obtain an approximation to the solution $\nu_{\ell_{\max}}$ of the Galerkin discretisation \eqref{eq:Galerkina} up to a given \underline{relative error tolerance} $\epsilon$ by solving the linear system \eqref{eq:Matrix} using GMRES, and the number of iterations required is independent of $N$. Of course, in practice, the error of each GMRES iterate is unknown and the solver is typically run until some \underline{relative residual tolerance} is reached. Numerical experiments we have performed (see Section \ref{sec:4} for specific geometric settings) indicate that the relative residual is typically one order of magnitude larger than the relative error so a conservative strategy would be to set the GMRES tolerance two orders of magnitude lower than the desired relative error tolerance.
		\end{remark}
		
		\vspace{2mm}
		
		\noindent {\large \textbf{GMRES-based Solution Strategy for Obtaining the Induced Surface Charge}}~
		
		Given a free charge $\sigma_f \in {H}^{-\frac{1}{2}}(\partial \Omega)$, the goal is to obtain-- up to some given tolerance-- the solution~$\nu_{\ell_{\max}}~\in W^{\ell_{\max}}(\partial \Omega) $ to the Galerkin discretisation \eqref{eq:Galerkina} for the induced surface charge.
		
		\begin{enumerate}
			
			\item Fix $\ell_{\max} \in \mathbb{N}$ and use Definition \ref{def:Matrices} to compute the right-hand side vector $\boldsymbol{\sigma_f} \in \mathbb{R}^M$. Due to the use of the FMM, the total computational cost of this step is $\mathcal{O}(N)$.
			
			\item Use GMRES to solve-- up to some tolerance-- the linear system \eqref{eq:Matrix} involving $\boldsymbol{A}$ and $\boldsymbol{\sigma_f}$. This yields a solution vector $\boldsymbol{\lambda}^{\rm approx} \in \mathbb{R}^M$. Notice that $\boldsymbol{\lambda}^{\rm approx}$ is the vector representation of the function $\lambda_{\ell_{\max}}^{\rm approx} \in W_0^{\ell_{\max}}(\partial \Omega)$ which is an approximation to the true solution $\lambda_{\ell_{\max}}$ of the ``reduced'' Galerkin discretisation \eqref{eq:Galerkin}. Thanks to the FMM, the cost of a single matrix vector product involving $\boldsymbol{A}$ is $\mathcal{O}(N)$. Moreover, due to Theorem \ref{thm:3}, the total number of iterations of GMRES required in this step is independent of $N$. Consequently, the total computational cost of this step is also $\mathcal{O}(N)$.
			
			\item Following the procedure outlined in Remark \ref{rem:thm3}, use the solution vector $\boldsymbol{\lambda}^{\rm approx} \in \mathbb{R}^M$ to compute the vector $\boldsymbol{\nu}^{\rm approx} \in \mathbb{R}^{M+N}$. Clearly, the computational cost of this step is $\mathcal{O}(N)$.
			
			\item $\boldsymbol{\nu}^{\rm approx}$ is now the vector representation of some function $\nu^{\rm approx}_{\ell_{\max}} \in W^{\ell_{\max}}(\partial \Omega) $ that is the required approximation of the true solution $\nu_{\ell_{\max}}$ to the Galerkin discretisation \eqref{eq:Galerkina}. 
		\end{enumerate}

		We conclude this subsection by emphasising the key implication of Theorem \ref{thm:3} and our solution strategy: Given a geometrical configuration of $N$ particles that satisfies appropriate geometrical assumptions, we can compute- up to any given error tolerance-- the solution to the Galerkin discretisation \eqref{eq:Galerkin} using $\mathcal{O}(N)$ operations. In other words the numerical method is linear scaling in cost. Since the main result in \cite{Hassan} derived $N$-independent error estimates for the Galerkin discretisation \eqref{eq:Galerkin} and thus established $N$-error stability, we can conclude that the numerical method is also \emph{linear scaling in accuracy}, i.e., in order to obtain the approximate induced surface charge up to a fixed average or relative error, the computational cost scales linearly in $N$. 
		
		\vspace{2mm}
		\subsection{An Approach Based on the Conjugate-Gradient Method.}\label{sec:3.3}~
		
		\vspace{2mm}
		
		Notice that the convergence analysis we presented in Section \ref{sec:3.2} relied crucially on the similarity of the finite dimensional operators $\widetilde{\mathcal{A}}$ defined through Definition \ref{def:tildeA} and $\widetilde{\mathcal{A} }^{\rm sym}$ defined though Definition \ref{def:Asym}. The goal of this section is to further exploit this similarity and outline a solution strategy based on the use of the conjugate gradient (CG) method rather than GMRES. Throughout this subsection, we assume the setting of Section \ref{sec:3.2}.

		\begin{definition}\label{def:newRHS}
			Let $\ell_{\max} \in \mathbb{N}$, let $\sigma_f \in H^{-\frac{1}{2}}(\partial \Omega)$, and let the vector $\boldsymbol{\sigma_f} \in \mathbb{R}^M$ and the diagonal positive definite matrix $\boldsymbol{\rm DtN}^{\kappa} \in \mathbb{R}^{M \times M}$ be defined as in Definition \ref{def:Matrices}. Then we define the vector $\boldsymbol{\widetilde{\sigma_f}} \in \mathbb{R}^{M}$ as
			\begin{align*}
				\boldsymbol{\widetilde{\sigma_f}}:= \boldsymbol{\rm DtN}^{\kappa}\boldsymbol{\sigma_f}.
			\end{align*}
		\end{definition}
		
		Using Definition \ref{def:newRHS}, we can formulate a matrix equation associated with the symmetric finite-dimensional operator~$\widetilde{\mathcal{A} }^{\rm sym}$. \vspace{5mm}
		
		\noindent{\textbf{Symmetric Matrix Equation for $\widetilde{\mathcal{A} }^{\rm sym}$:}}~
		
		Let $\ell_{\max} \in \mathbb{N}$, let $\sigma_f \in H^{-\frac{1}{2}}(\partial \Omega)$, let the symmetric matrix $\boldsymbol{A}^{\rm sym} \in \mathbb{R}^{M \times M}$ be defined as in Definition \ref{def:Matrices}, and let the vector $\boldsymbol{\widetilde{\sigma_f}} \in \mathbb{R}^{M}$ be defined as in Definition \ref{def:newRHS}. Find a vector $\boldsymbol{\lambda}^{\rm sym} \in \mathbb{R}^{M}$ such that
		\begin{align}\label{eq:Matrix_sym}
			\boldsymbol{A}^{\rm sym}\boldsymbol{\lambda}^{\rm sym}= \boldsymbol{\widetilde{\sigma_f}}.
		\end{align}
		
		Notice that Equation \eqref{eq:Matrix_sym} is well-posed since $\boldsymbol{A}^{\rm sym}$ is symmetric positive definite (see Lemma \ref{lem:eigenvalues}). Furthermore, if we denote by $\boldsymbol{\lambda} \in \mathbb{R}^M$ and $\boldsymbol{\lambda}^{\rm sym} \in \mathbb{R}^{M}$ the solutions to the matrix equations \eqref{eq:Matrix} and \eqref{eq:Matrix_sym} respectively, then it is easy to see that
		\begin{align}\label{eq:sym}
			\boldsymbol{\lambda}= (\boldsymbol{\rm DtN}^{\kappa})^{-1} \boldsymbol{\lambda}^{\rm sym}.
		\end{align}
		
		Equation \eqref{eq:sym} suggests that we can avoid the use of GMRES for solving the non-symmetric matrix Equation \eqref{eq:Matrix} and instead solve the symmetric matrix Equation \eqref{eq:Matrix_sym} using the CG method. Our next result pertains to the convergence analysis of this alternative approach.

		\begin{theorem}[Convergence Analysis for CG-based Strategy]\label{thm:4}~\\
			Let $\ell_{\max} \in \mathbb{N}$, let $\sigma_f \in H^{-\frac{1}{2}}(\partial \Omega)$, let $\nu_{\ell_{\max}} \in W^{\ell_{\max}}(\partial \Omega) $ be the unique solution to the Galerkin discretisation defined through Equation \eqref{eq:Galerkina} with right hand side given by $\sigma_f$, and let $\mathbb{P}_0^{\perp} \colon H^{\frac{1}{2}}(\partial \Omega) \rightarrow \breve{H}^{\frac{1}{2}}(\partial \Omega)$ and $\mathbb{Q}_{\ell_{\max}}\colon H^{\frac{1}{2}}(\partial \Omega) \rightarrow  W^{\ell_{\max}}(\partial \Omega) $ be the orthogonal projection operators. Then for every $\epsilon > 0$ there exists a function $\nu_{\ell_{\max}}^{\rm{approx}} \in W^{\ell_{\max}}(\partial \Omega) $ and a natural number $S_{\epsilon}>0$ that does not depend on the number of dielectric spheres $N$ such that $\nu_{\ell_{\max}}^{\rm{approx}} $ can be computed using at most $S_{\epsilon}$ iterations of the conjugate gradient method and such that the following error estimate holds
			\begin{align*}
				\frac{||| \nu_{\ell_{\max}}^{\rm{approx}}  - \nu_{\ell_{\max}}|||^*}{||| \mathbb{P}_0^{\perp}\nu_{\ell_{\max}}|||^*+ \frac{4\pi}{\kappa_0}||| \mathbb{P}_0^{\perp}\mathbb{Q}_{\ell_{\max}}\sigma_f|||^*}\,  < \epsilon.
			\end{align*}
		\end{theorem}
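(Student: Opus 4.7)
The plan is to mirror the structure of the proof of Theorem \ref{thm:3}, substituting the classical conjugate gradient error estimate in the energy norm for the min-max polynomial argument used for GMRES, and passing through the symmetric system \eqref{eq:Matrix_sym} rather than \eqref{eq:Matrix}. The opening reduction is essentially identical to the one in the proof of Theorem \ref{thm:3}: I introduce the affine map $\mathcal{B}$ as in \eqref{eq:thm31}, note that $\mathcal{B}\lambda_{\ell_{\max}} = \nu_{\ell_{\max}}$ by Lemma \ref{lem:indirect}, and use the implication \eqref{eq:thm32} to reduce matters to producing, for any given $\widetilde{\epsilon}>0$, an approximation $\psi_{\ell_{\max}}\in W_0^{\ell_{\max}}(\partial\Omega)$ to $\lambda_{\ell_{\max}}$ satisfying $|||\lambda_{\ell_{\max}}-\psi_{\ell_{\max}}|||<\widetilde{\epsilon}\bigl(|||\mathbb{P}_0^{\perp}\nu_{\ell_{\max}}|||^*+\tfrac{4\pi}{\kappa_0}|||\mathbb{P}_0^{\perp}\mathbb{Q}_{\ell_{\max}}\sigma_f|||^*\bigr)$, in a number of iterations independent of $N$.

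Next, I apply CG with initial guess $\boldsymbol{\lambda}^{\rm sym}_0=\boldsymbol{0}$ to the symmetric positive definite system \eqref{eq:Matrix_sym}. Lemma \ref{lem:eigenvalues} bounds the condition number by $C_{\widetilde{\mathcal{A}}}/\alpha_0$, with both constants independent of $N$, so the standard CG error estimate (see, e.g., \cite[Chapter 3]{Fischer}) gives
\begin{align*}
\|\boldsymbol{\lambda}^{\rm sym}-\boldsymbol{\lambda}^{\rm sym}_k\|_{\boldsymbol{A}^{\rm sym}} \leq 2\left(\frac{\sqrt{C_{\widetilde{\mathcal{A}}}/\alpha_0}-1}{\sqrt{C_{\widetilde{\mathcal{A}}}/\alpha_0}+1}\right)^k \|\boldsymbol{\lambda}^{\rm sym}\|_{\boldsymbol{A}^{\rm sym}}.
\end{align*}
The two-sided inequality $\alpha_0\|\cdot\|_{\ell^2}^2 \leq \|\cdot\|_{\boldsymbol{A}^{\rm sym}}^2 \leq C_{\widetilde{\mathcal{A}}}\|\cdot\|_{\ell^2}^2$ converts this into an $\ell^2$ estimate, at the cost of only an $N$-independent factor $\sqrt{C_{\widetilde{\mathcal{A}}}/\alpha_0}$. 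Then, setting $\boldsymbol{\lambda}_k^{\rm approx}:=(\boldsymbol{\rm DtN}^{\kappa})^{-1}\boldsymbol{\lambda}^{\rm sym}_k$, identity \eqref{eq:sym} gives $\boldsymbol{\lambda}-\boldsymbol{\lambda}_k^{\rm approx}=(\boldsymbol{\rm DtN}^{\kappa})^{-1}(\boldsymbol{\lambda}^{\rm sym}-\boldsymbol{\lambda}^{\rm sym}_k)$, so the bound on $\|(\boldsymbol{\rm DtN}^{\kappa})^{-1}\|_2$ already computed inside the proof of Theorem \ref{thm:3} — which involves only $\min_j r_j^{3/2}$ and $\min|\kappa-\kappa_0|^{1/2}$ — propagates the estimate to $\|\boldsymbol{\lambda}-\boldsymbol{\lambda}_k^{\rm approx}\|_{\ell^2}$ with no hidden $N$-dependence.

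Finally, I close the loop by translating the $\ell^2$ bound on $\mathbb{R}^M$ back to the $|||\cdot|||$ norm on $W_0^{\ell_{\max}}(\partial\Omega)$. Working in the local spherical harmonic basis of Definition \ref{def:Basis} and using the $L^2$-orthogonality of the $\mathcal{Y}^i_{\ell m}$, a direct computation gives $\|\boldsymbol{\psi}\|_{\ell^2}^2 \geq \tfrac{\min_j r_j^3}{\ell_{\max}}|||\psi|||^2$ for every $\psi\in W_0^{\ell_{\max}}(\partial\Omega)$, which yields the required $|||\cdot|||$-error bound on $\lambda_{\ell_{\max}}-\lambda_k^{\ell_{\max}}$, where $\lambda_k^{\ell_{\max}}$ is the function representation of $\boldsymbol{\lambda}_k^{\rm approx}$. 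Upper bounding $|||\lambda_{\ell_{\max}}|||$ in terms of the denominator $|||\mathbb{P}_0^{\perp}\nu_{\ell_{\max}}|||^*+\tfrac{4\pi}{\kappa_0}|||\mathbb{P}_0^{\perp}\mathbb{Q}_{\ell_{\max}}\sigma_f|||^*$ is done exactly as at the end of the proof of Theorem \ref{thm:3}. Defining $S_\epsilon$ by a formula analogous to \eqref{eq:iterations} with only the prefactor constants modified, and setting $\nu_{\ell_{\max}}^{\rm approx}:=\mathcal{B}\lambda_{S_\epsilon}^{\ell_{\max}}$, completes the argument. The main novelty relative to Theorem \ref{thm:3} is that, since the CG energy-norm estimate controls the error rather than the residual, the discrete inf-sup constant $\beta_{\widetilde{\mathcal{A}}}$ never enters the bound, which is expected to yield a slightly smaller prefactor; the only substantive check is to verify that the eigenvalue bounds of Lemma \ref{lem:eigenvalues} are robust enough to bridge the passage from the $\boldsymbol{A}^{\rm sym}$-energy norm to the $\ell^2$ norm without introducing hidden $N$-dependence, which is the likely point of difficulty.
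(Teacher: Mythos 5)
Your proposal is correct and follows exactly the route the paper indicates: the paper omits the details of this proof (deferring to the dissertation) but states that it is very similar to Theorem \ref{thm:3}, and your adaptation — swapping the GMRES residual bound for the CG energy-norm bound applied to the symmetric system \eqref{eq:Matrix_sym}, using Lemma \ref{lem:eigenvalues} for the condition number, passing back to $\boldsymbol{\lambda}$ via $(\boldsymbol{\rm DtN}^\kappa)^{-1}$, and closing with the same $\ell^2$-to-$|||\cdot|||$ basis estimates and the bound on $|||\lambda_{\ell_{\max}}|||$ from the end of Theorem \ref{thm:3} — is precisely that adaptation. The one small point you leave implicit is the reverse inequality $\|\boldsymbol{\psi}\|_{\ell^2}\leq \max_j r_j^{3/2}|||\psi|||$ (valid on $W_0^{\ell_{\max}}(\partial\Omega)$ since $\ell\geq 1$ there), which is needed to turn $\|\boldsymbol{\lambda}\|_{\ell^2}$ into $|||\lambda_{\ell_{\max}}|||$ before applying the final denominator bound; this is the same inequality already used implicitly in the paper's estimate of $\Vert\boldsymbol{r}_0\Vert_{\ell^2}$, so it is an omission of detail rather than a gap, and your concern about the energy-norm-to-$\ell^2$ passage introducing hidden $N$-dependence is unfounded given assumption \textbf{A1}.
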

		\begin{proof}	
			The proof of Theorem \ref{thm:4} is very similar to the proof of Theorem \ref{thm:3} so we omit it for the sake of brevity. A detailed proof can be found in \cite{Hassan_Dis}. Let us remark however that the bound on the number of CG iterations $S_{\epsilon}>0$ that we obtain is very similar to the bound $R_{\epsilon}$ for GMRES.

		\end{proof}

		\noindent {\large \textbf{CG-based Solution Strategy for Obtaining the Induced Surface Charge}}~
		
		Given a free charge $\sigma_f \in {H}^{-\frac{1}{2}}(\partial \Omega)$, the goal is to obtain-- up to some given tolerance-- the solution~$\nu_{\ell_{\max}} \in$ $W^{\ell_{\max}}(\partial \Omega) $ to the Galerkin discretisation \eqref{eq:Galerkina} for the induced surface charge.
		
		\begin{enumerate}
			
			\item Fix $\ell_{\max} \in \mathbb{N}$ and use Definitions \ref{def:Matrices} and \ref{def:newRHS} to compute the right-hand side vector $\boldsymbol{\widetilde{\sigma_f}} \in \mathbb{R}^M$. Due to the use of the FMM and the fact that $\boldsymbol{\rm DtN}^{\kappa}$ is a diagonal matrix, the total computational cost of this step is $\mathcal{O}(N)$.
			
			\item Use the CG method to solve-- up to some tolerance-- the linear system \eqref{eq:Matrix_sym} involving $\boldsymbol{A}^{\rm sym}$ and $\boldsymbol{\widetilde{\sigma_f}}$. This yields a solution vector $\boldsymbol{\lambda}^{\rm sym}_{\rm approx} \in \mathbb{R}^M$. Once again, the cost of a single matrix vector product involving $\boldsymbol{A}^{\rm sym}$ is $\mathcal{O}(N)$ thanks to the FMM. Due to Theorem \ref{thm:4}, the total number of CG iterations required in this step is independent of $N$, and therefore the total computational cost of this step is also~$\mathcal{O}(N)$.
			
			\item Use the solution vector $\boldsymbol{\lambda}^{\rm sym}_{\rm approx} \in \mathbb{R}^M$ to compute the vector $\boldsymbol{\nu}^{\rm approx} \in \mathbb{R}^{M+N}$. This can be done by recalling Equation \eqref{eq:sym} and following the procedure outlined in Remark \ref{rem:thm3} with a few obvious modifications. The computational cost of this step is $\mathcal{O}(N)$.
			
			\item $\boldsymbol{\nu}^{\rm approx}$ is now the vector representation of some function $\nu^{\rm approx}_{\ell_{\max}} \in W^{\ell_{\max}}(\partial \Omega) $ that is the required approximation of the true solution $\nu_{\ell_{\max}}$ to the Galerkin discretisation \eqref{eq:Galerkina}. 
		\end{enumerate}

		\section{Numerical Experiments}\label{sec:4}
		Throughout this section, we assume the setting described in Sections \ref{sec:2} and \ref{sec:3}. Our goal is now two-fold. First, we wish to present numerical results supporting the conclusions of Theorem \ref{thm:3} and \ref{thm:4} concerning the number of linear solver iterations required to obtain an approximate solutions up to a given and fixed relative error. Second, we would like to provide numerical evidence that that the solution strategies presented in Section~\ref{sec:3} are indeed linear scaling in accuracy. Demonstrating this linear scaling behaviour is rather subtle as we now explain.
		

		
		The key complication is that the FMM, which is used to compute matrix-vector products involving the solution matrix, is not exact (see, e.g., \cite{greengard1}) and introduces an approximation error as soon as one increases the depth of the octree structure of the bounding box containing all multipole sources. This is because the FMM uses an approximate so-called `far field' to compute interactions between multipole sources that belong to well-separated leaves of the octree. Note that increasing the depth of the octree is required to maintain linear scaling complexity for larger matrices, i.e., for matrices corresponding to systems with an increasing number $N$ of dielectric particles. 
		In principle, the error introduced by the far-field computations can be made arbitrarily small by increasing the maximal degree of spherical harmonics that are used in the multipole expansions of the underlying kernel but increasing the expansion degree also increases the computational cost of each matrix-vector product. There is thus a tradeoff between the computational cost and accuracy of the FMM. 
		
		Consequently, in Subsection \ref{sec:4.2}, we first explore the interplay between the FMM error and discretisation error for different values of the system parameters. Based on these results, we pick FMM system parameters that result in a linear scaling in accuracy solution strategy such that the FMM error does not dominate the discretisation error.
		
		\vspace{2mm}
		
		\subsection{Validation of Theoretical Results}\label{sec:4.1}~
		
		\vspace{2mm}
		
		Since standard FMM libraries typically consider point charges as input, we have used instead a modification of the ScalFMM library (see \cite{lindgren2018} for an explanation of the modifications and \cite{agullo2014task, blanchard2015scalfmm, messner2012optimized} for details on ScalFMM). The subsequent numerical simulations were performed with a single level FMM octree so as to avoid using the approximate `far-field' and to perform all computations using the exact FMM `near-field' instead. Unless stated otherwise, the discretisation parameter was fixed as $\ell_{\max}=5$.\vspace{2mm}
		
		\noindent {\textbf{$N$-independence}}~
		Our first set of numerical experiments is designed to demonstrate that the number of linear solver iterations required to obtain an approximation up to a given relative error to $\nu_{\ell_{\max}}$ is independent of the number $N$ of dielectric particles. 
		
		We adopt the following geometric setting: We consider dielectric spheres of radius 1 and dielectric constant $\kappa=10$ arranged on a regular cubic lattice of edge length $2.5$. The spheres carry alternating unit positive and negative charges, and the background medium is assumed to be vacuum so that $\kappa_0=1$. An example of the problem geometry is displayed in Figure \ref{fig:11}. The number of spheres is increased simply by increasing the size of the lattice.
		
		\begin{figure}[h]
			\centering
			\begin{subfigure}[t]{0.48\textwidth}
				\centering
				\includegraphics[width=\textwidth]{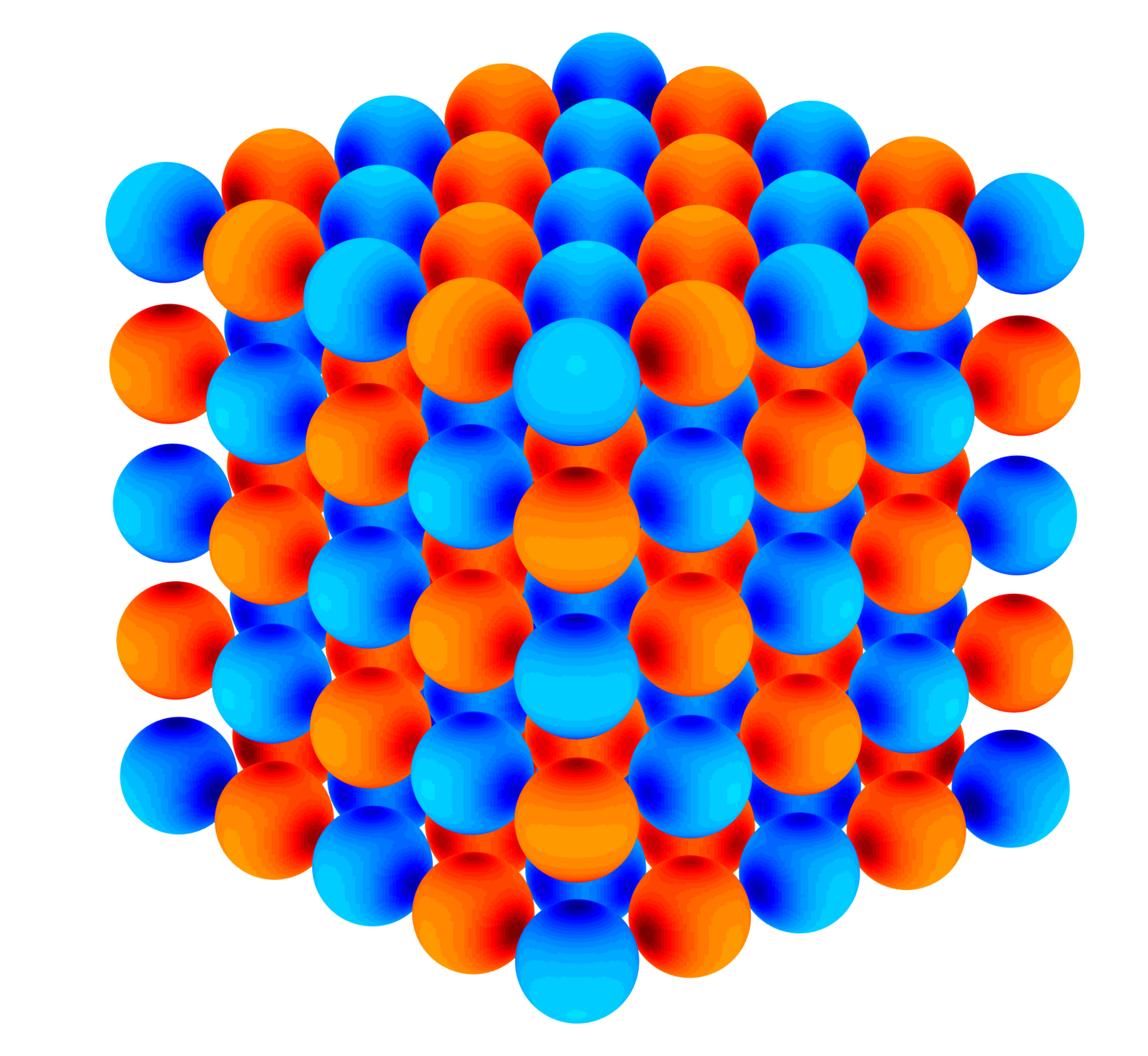} 
				\caption{The basic geometric setting of our numerical experiments. The dielectric spheres are arranged on a three dimensional, regular cubic lattice with edge length $2.5$.}
				\label{fig:11}
			\end{subfigure}\hfill
			\begin{subfigure}[t]{0.48\textwidth}
				\centering
				\includegraphics[width=\textwidth]{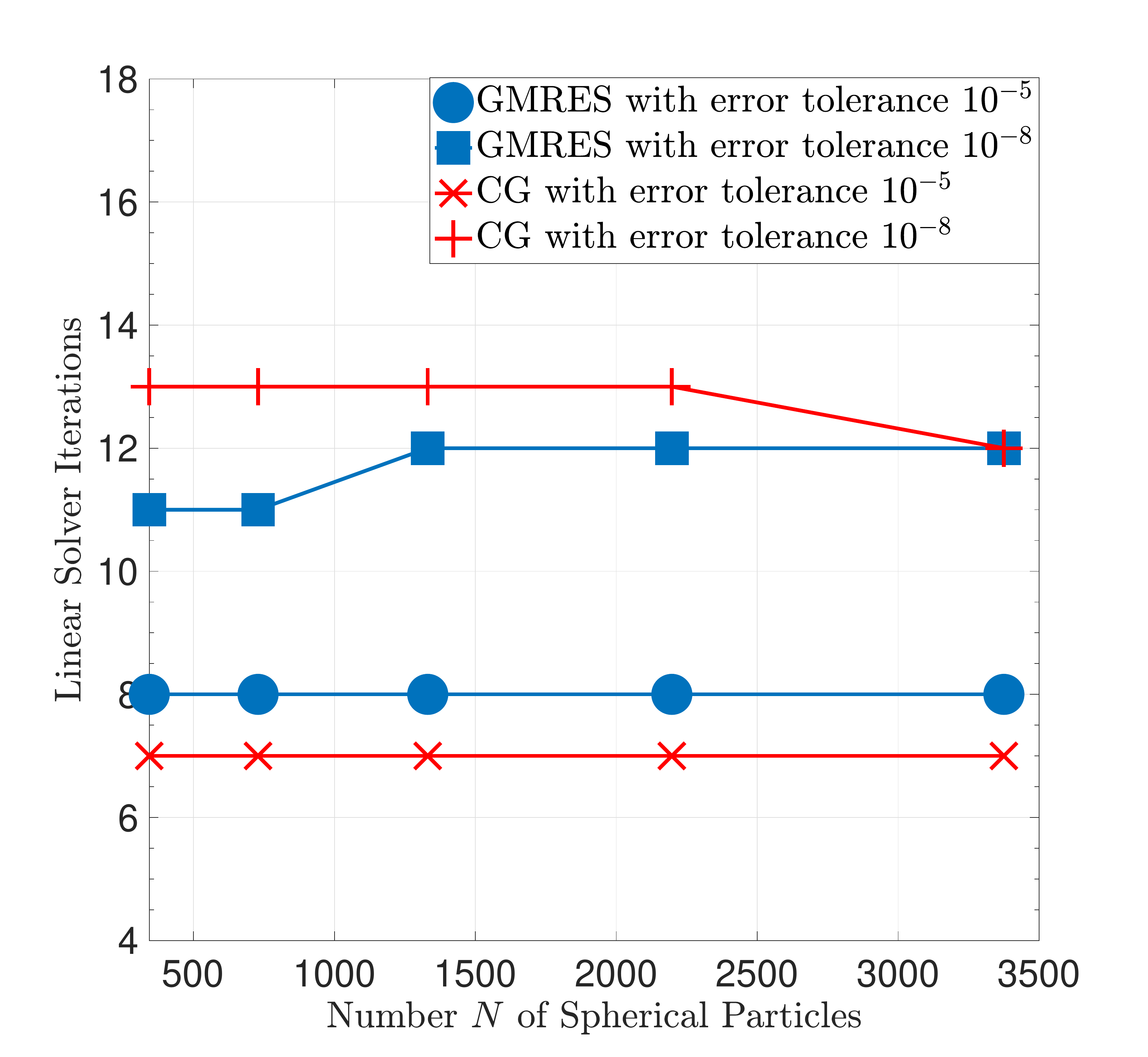} 
				\caption{Linear Solver iterations required to obtain an approximate solution with a given error tolerance as a function of the number $N$ of dielectric particles.}
				\label{fig:12}
			\end{subfigure}
			\caption{Left: An example of the problem geometry. Right: The linear solver iterations as a function of the number $N$ of spherical particles.}
		\end{figure}
		
		Figure \ref{fig:12} displays the number of GMRES and CG iterations required to produce an approximation $\nu_{\ell_{\max}}^{\rm approx}$ of the true solution $\nu_{\ell_{\max}}$ to the Galerkin discretisation \eqref{eq:Galerkina} with a given relative error tolerance. The true solution $\nu_{\ell_{\max}}$ was calculated by solving the linear system with tolerance $10^{-13}$ and the relative error was calculated exactly as in Theorems \ref{thm:3} and \ref{thm:4}. Clearly, the numerical results agree with Theorems \ref{thm:3} and \ref{thm:4}. Interestingly, for lower error tolerances, the number of CG iterations is smaller than the number of GMRES iterations and the situation is reversed for higher tolerances.\vspace{2mm}

		\noindent {\textbf{Dependence on the Dielectric Constant Ratio}}~
		Next, we explore the effects of different dielectric constant ratios on the number of linear solver iterations required to obtain an approximate solution satisfying a given error tolerance. The bounds obtained in Theorems \ref{thm:3} and \ref{thm:4} and Remark \ref{rem:effect} indicate that
		\begin{itemize}
			\item If $\kappa> \kappa_0$, then the number of iterations should grow at most as $ \sqrt{\frac{\max \kappa}{\kappa_0}}$ for increasing $\frac{\max\kappa}{\kappa_0}$.
			
			\item If $\kappa< \kappa_0$, then the number of iterations should grow at most as $ \sqrt{\frac{\kappa_0}{ \min\kappa}}$ for increasing $\frac{\kappa}{\min\kappa_0}$.
		\end{itemize}
		
		The problem geometry is similar to the one from the previous test case. We consider a total of 125 identical dielectric spheres of radius 1 with alternating positive and negative charge, arranged on a regular cubic lattice of edge length $2.5$. We set $\kappa_0=1$ and we allow the dielectric constant $\kappa$ of the spheres to vary from extremely high to extremely low. In all cases, the true solution $\nu_{\ell_{\max}}$ was calculated by solving the linear system with tolerance~$10^{-13}$.
		
		\begin{figure}[h!]
			\centering
			\begin{subfigure}[t]{0.48\textwidth}
				\centering
				\includegraphics[width=1\textwidth]{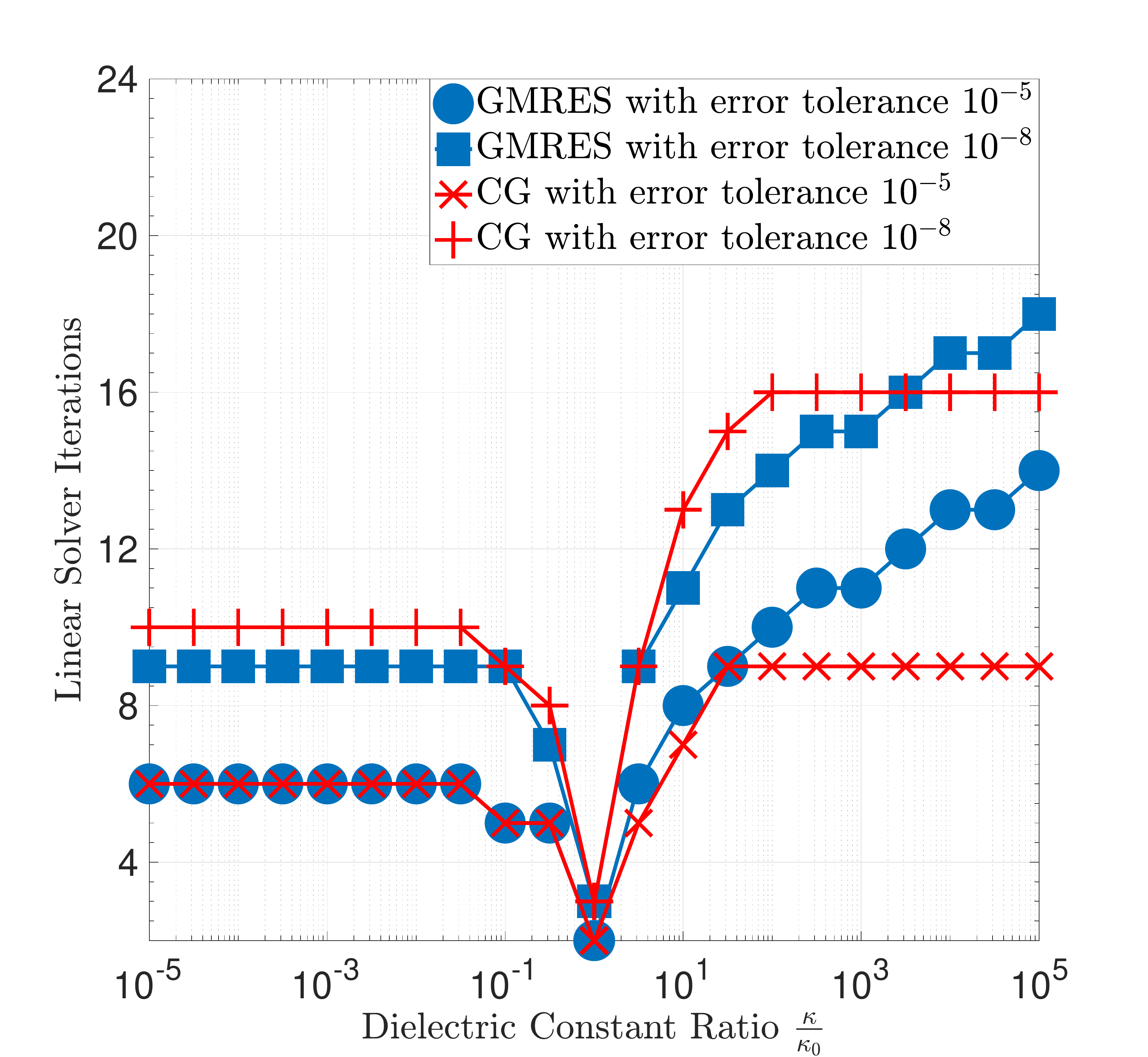} 
				\caption{Linear Solver iterations required to obtain an approximate solution with a given error tolerance as a function of the dielectric constant ratio.}
				\label{fig:21}
			\end{subfigure}\hfill
			\begin{subfigure}[t]{0.48\textwidth}
				\centering
				\includegraphics[width=1\textwidth]{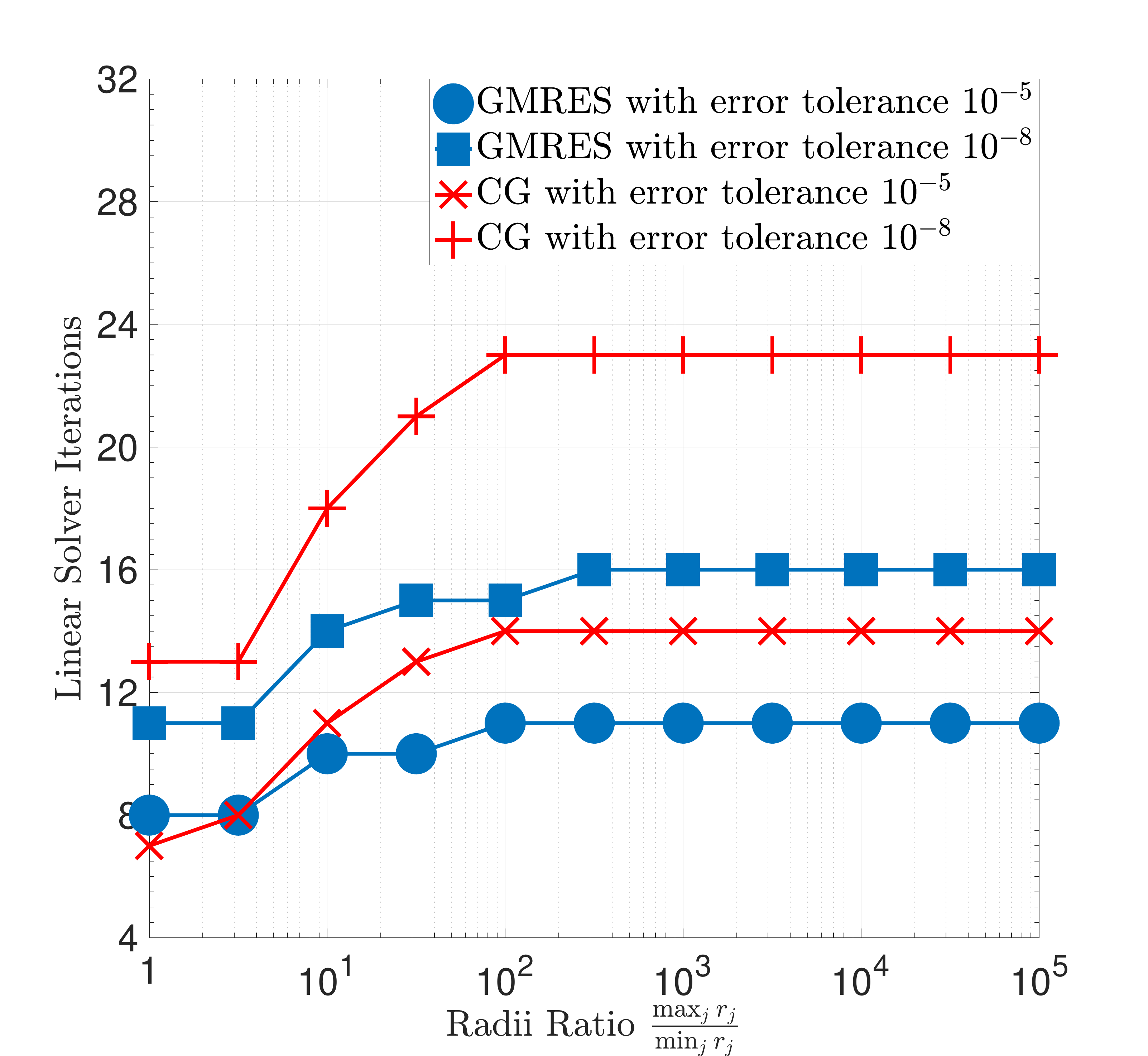} 
				\caption{Linear Solver iterations required to obtain an approximate solution with a given error tolerance as a function of the radii ratio.}
				\label{fig:31}
			\end{subfigure}
			\caption{Left: The linear solver iterations as a function of the dielectric constant ratio $\frac{\kappa}{\kappa_0}$. Right: The linear solver iterations as a function of the radii ratio $\frac{\max r_j}{\min r_j}$.}
		\end{figure}
		
		Figure \ref{fig:21} displays the number of GMRES and CG iterations required to produce an approximation to the true solution satisfying a given error tolerance. For very large dielectric ratio $\frac{\kappa}{\kappa_0}$, the number of GMRES iterations seems to grow logarithmically while the number of CG iterations grows at first but soon reaches a plateau. On the other hand, for very small dielectric ratio $\frac{\kappa}{\kappa_0}$, the number of iterations in both cases quickly reach a plateau. These results suggest that the bounds we have obtained in Theorems \ref{thm:3} and \ref{thm:4} may not be sharp. Interestingly, we again observe that for low error tolerances, CG outperforms GMRES.\vspace{2mm}
		
		\noindent{\textbf{Dependence on the Radii Ratio}}~		
		We now consider the dependence of the number of linear solver iterations on the ratio of the maximum and minimum radius of the dielectric spherical particles. As mentioned in Remark \ref{rem:effect}, we expect the number of iterations to grow at most as $\log(\frac{\max_{j=1, \ldots, N} r_j}{\min_{j=1, \ldots, N} r_j})$.
		
		The geometric setting consists of 125 dielectric spheres with dielectric constant $\kappa=10$, carrying alternating unit positive and negative charges, arranged on a regular cubic lattice of edge length $2.5$. We set $\kappa_0=1$ and we further set the radii of half of the dielectric spheres to one. The radii of all other dielectric spheres is set to $r$ and we vary $r$ from $10^{-5}$ to 1. As before, the true solution is calculated by setting the linear solver tolerance to~$10^{-13}$.
		
		Figure \ref{fig:31} displays the numerical results. In contrast to our theoretical results, the numerical simulations suggest that the number of iterations at first grow logarithmically as the radii ratio $\frac{\max_{j=1, \ldots, N} r_j}{\min_{j=1, \ldots, N} r_j}$ increases but the growth soon stops and for sufficiently large radii ratio, the number of iterations remains constant. We observe that for large radii ratios, GMRES significantly outperforms CG.\vspace{2mm}

		{\textbf{Dependence on the Separation Distance}}~
		Finally, we explore the dependence of the number of linear solver iterations on the minimal inter-sphere separation distance. We recall from Remark \ref{rem:effect} that we expect the number of linear solver iterations to grow at most as $c^{-\frac{1}{4}}_{\mathcal{V}}$ where $c_{\mathcal{V}}$ is the coercivity constant of the single layer boundary operator. Moreover, we have shown in the first article \cite[Lemma 4.7]{Hassan} that $c_{\mathcal{V}}= \mathcal{O}(\delta)$ for small $\delta$ where $\delta$ is the minimum inter-sphere separation distance.
		
		We consider once again 125 identical dielectric spheres of radius 1 and dielectric constant $\kappa=10$ with alternating positive and negative charge, arranged on a regular cubic lattice of edge length $E$. We assume the background medium to be vacuum so that $\kappa_0=1$ and we vary the edge length $E$ from $2+10^{-4}$ to 7. Thus, the minimum separation varies from $10^{-4}$ to $5$. In all cases, the true solution $\nu_{\ell_{\max}}$ was calculated by solving the linear system with tolerance~$10^{-13}$. Figure \ref{fig:41} displays the numerical results.
		
		\begin{figure}[h]
			\centering
			\begin{subfigure}[t]{0.48\textwidth}
				\centering
				\includegraphics[width=1\textwidth]{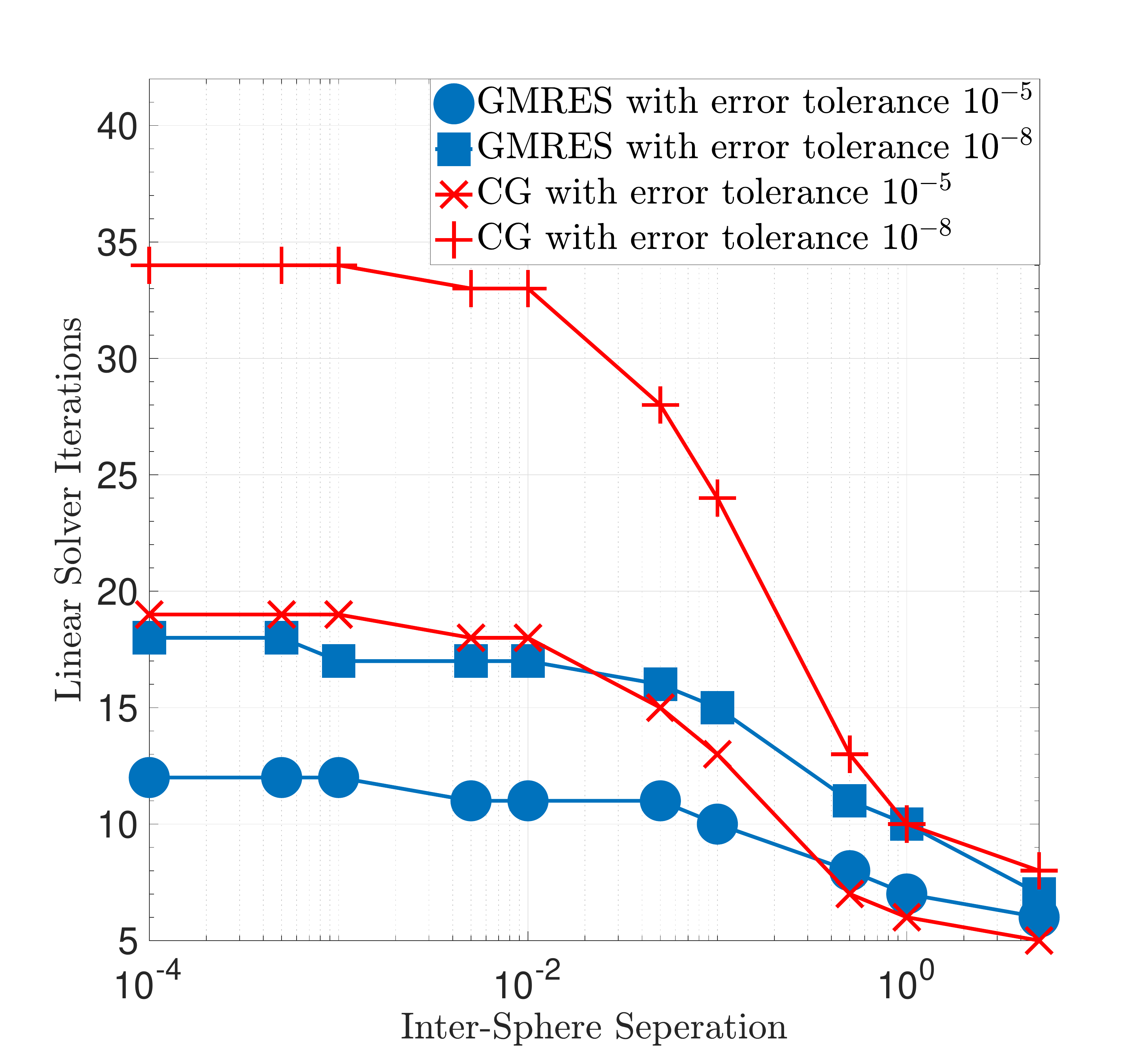} 
				\caption{Linear Solver iterations required to obtain an approximate solution with a given error tolerance as a function of the minimum inter-sphere separation distance.}
				\label{fig:41}
			\end{subfigure}\hfill
			\begin{subfigure}[t]{0.48\textwidth}
				\centering
				\includegraphics[width=1\textwidth]{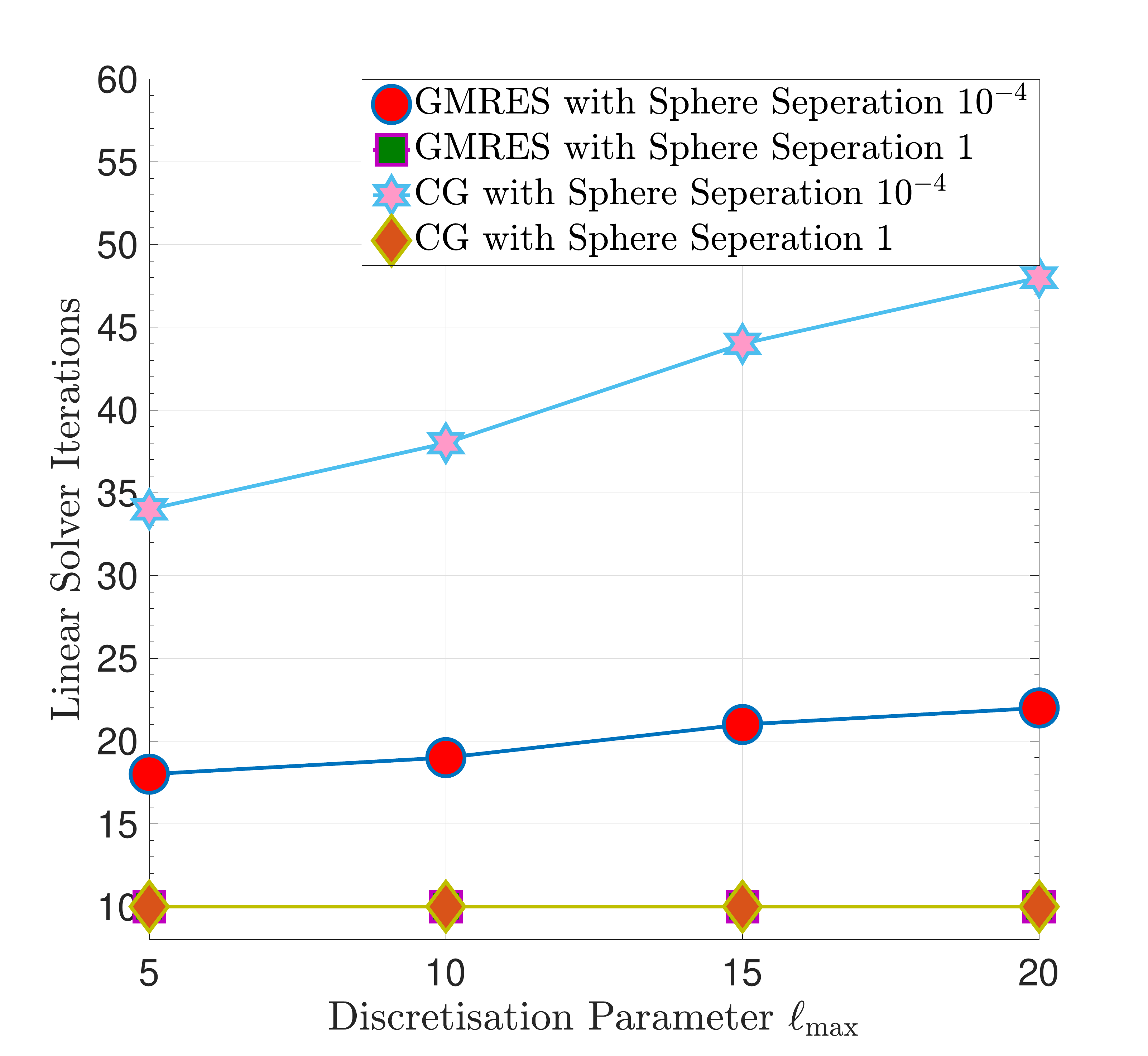} 
				\caption{Linear Solver iterations required to obtain an approximate solution with error tolerance $10^{-8}$ as a function of $\ell_{\max}$ for very small and moderate separation distances.}
				\label{fig:42}
			\end{subfigure}
			\caption{Left: The linear solver iterations as a function of the minimum sphere separation. Right: The linear solver iterations as a function of $\ell_{\max}$ for very small and moderate separations.}
		\end{figure}

		There are two features of interest in these numerical results. First, we observe that for very small separation distances, the number of CG iterations far exceeds the number of GMRES iterations. Second, we observe that while the number of iterations in both cases grows as the separation distance decreases, the growth stops at some point and the number of iterations plateaus. We conjecture that this is due to the fact that we use the continuity constant $C_{\widetilde{\mathcal{A} }}$ of the infinite-dimensional operator $\widetilde{\mathcal{A}}$ to bound the largest eigenvalue of the solution matrix, and functions which achieve (or approximately achieve) the upper bound $C_{\widetilde{A}}$ are not well-approximated in the approximation space $W^{\ell_{\max}}_0(\partial \Omega) $ for small $\ell_{\max}$.

		To test the above hypothesis, we plot in Figure \ref{fig:42} the number of linear solver iterations for different values of $\ell_{\max}$ with edge lengths $E=2+10^{-4}$ and $E=3$. The error tolerance was set to $10^{-8}$. We observe that the number of iterations remains constant in the case $E=1$ but increases in the case $E=10^{-4}$, which supports our conjecture. Let us remark here that a possible strategy for the treatment of point singularities that arise due to small separation distances between the particles has, for instance, been proposed in the contribution \cite{MR3493124}. The authors in \cite{MR3493124} derive analytical expressions for the induced potential both inside and outside a dielectric spherical particle due to a general multipole source using the method of image charges and image potentials. These analytical expressions are then combined with the classical method of moments to construct a hybrid algorithm. Numerical experiments indicate that the hybrid method has significantly better accuracy than the classical method of moments and also leads to solution matrices that do not suffer from ill-conditioning.

		\vspace{2mm}
		
		\subsection{FMM Error and Linear Scaling Solution Strategy}\label{sec:4.2}~
		
		\vspace{2mm}
		
		We now explore the interplay between the discretisation error and the error introduced by the FMM. As mentioned at the beginning of this section, given a system of $N$ interacting particles, the FMM can compute matrix-vector products using only $\mathcal{O}(N)$ operations but this comes at the cost of introducing an approximation error. The approximation error typically grows as one increases the tree depth $D$, i.e., the number of levels in the octree structure of the FMM bounding box because this usually results in more particle interactions being computed using the less-accurate 'far-field' computation of the FMM. Conversely, the approximation error decreases if one increases the maximal degree $P$ of spherical harmonics used in the multipole expansion of the FMM kernel. 
		
		Consequently, the first goal of this section is to observe numerically how the FMM error compares with the discretisation error for different values of $D, P$ and $\ell_{\max}$. Based on these results, we propose appropriate values of $D$ and $P$ such that for an increasing number $N$ of particles, an approximate solution to the Galerkin equation \eqref{eq:Galerkina} can be computed in $\mathcal{O}(N)$ operations and such that the FMM approximation error does not dominate the discretisation error. Finally, we present numerical results on the computation times of our algorithm for increasing $N$ which utilise the proposed values of $P$ and $D$.
		

		All subsequent numerical experiments involve the following geometric setting: We consider two types of dielectric spheres arranged on a regular cubic lattice of edge length $7$. The first type of dielectric spheres have radius 3, dielectric constant $10$ and carry unit negative charge, and the second type of dielectric spheres have radius 2, dielectric constant $5$ and carry unit positive charge. The background medium is assumed to be vacuum so that $\kappa_0=1$, and the number of spheres is increased simply by increasing the size of the lattice.

		We consider two choices of the discretisation parameter, namely, $\ell_{\max}=5$ and $\ell_{\max}=10$. We compute so-called `pure discrete' solutions to the Galerkin discretisation \eqref{eq:Galerkina} in each of the two cases for a different number $N$ of spherical particles. These solutions are all computed using a one-level FMM tree, which results in the use of the exact `near-field' computation of the FMM. Additionally, the linear solver tolerance in each case is set to $10^{-10}$. Consequently, these pure discrete solutions can be assumed to have negligible FMM and linear solver errors. Unfortunately, since we wish to use the exact `near-field' computations of the FMM, the computational cost of obtaining each pure discrete solution grows as $\mathcal{O}(N^2)$. This limits the total number of spheres we consider to~$1728$.

		Next, we compute `approximate' solutions for both values of $\ell_{\max}$ by repeating the above computations for different values of the FMM parameters $D$ and $P$ whilst keeping all other parameters identical. In addition, we compute the `reference' solution $\nu$ to the BIE \eqref{eq:3.3a} by setting $\ell_{\max}=20$ and using a linear solver tolerance of $10^{-13}$. Our goal now is to
		
		\begin{itemize}
			\item Compute the \emph{approximation error due to the FMM} by comparing the pure discrete solutions and the approximate solutions computed above;
			\item Compute the \emph{discretisation error } by comparing the pure discrete solutions and the reference solution computed above.
		\end{itemize}

		Figures \ref{fig:71} and \ref{fig:72} display the relative FMM and discretisation errors (relative errors are calculated using the $||| \cdot |||^*$ norm) in all cases. Although, it is difficult to form a definitive conclusion based on a limited data set, we observe two broad trends:

		\begin{itemize}
			\item The FMM error far exceeds the discretisation error if $N$ is small and $D$ is high. We deduce from this that $D$ should only be increased in proportion with $N$. Based on these results, the FMM error is minimised if a 2 level tree is used for $512$ particles, which translates to exactly 8 particles per leaf. One possible strategy for attaining this optimal particle per leaf ratio is to to start with a 1 level tree and increase $D$ until there are no more than 32 particles in each leaf. Notice that the choice of $D$ is independent of $\ell_{\max}$ and depends only on the number of particles $N$.
			
			\item As expected, the FMM error decreases as the expansion degree $P$ is increased. Unfortunately, the computational cost of the FMM grows as $\mathcal{O}(P^3)$ so the optimal strategy is to find the minimum $P$ such that the FMM error is strictly smaller than the discretisation error. Based on the results presented here, one possible choice could be $P = 2\ell_{\max}$. We remark that the choice of $P$ is independent of the number of particles $N$ and depends only on the discretisation parameter $\ell_{\max}$.
		\end{itemize}

		\begin{figure}[h]
			\centering
			\begin{subfigure}[t]{0.48\textwidth}
				\centering
				\includegraphics[width=1\textwidth]{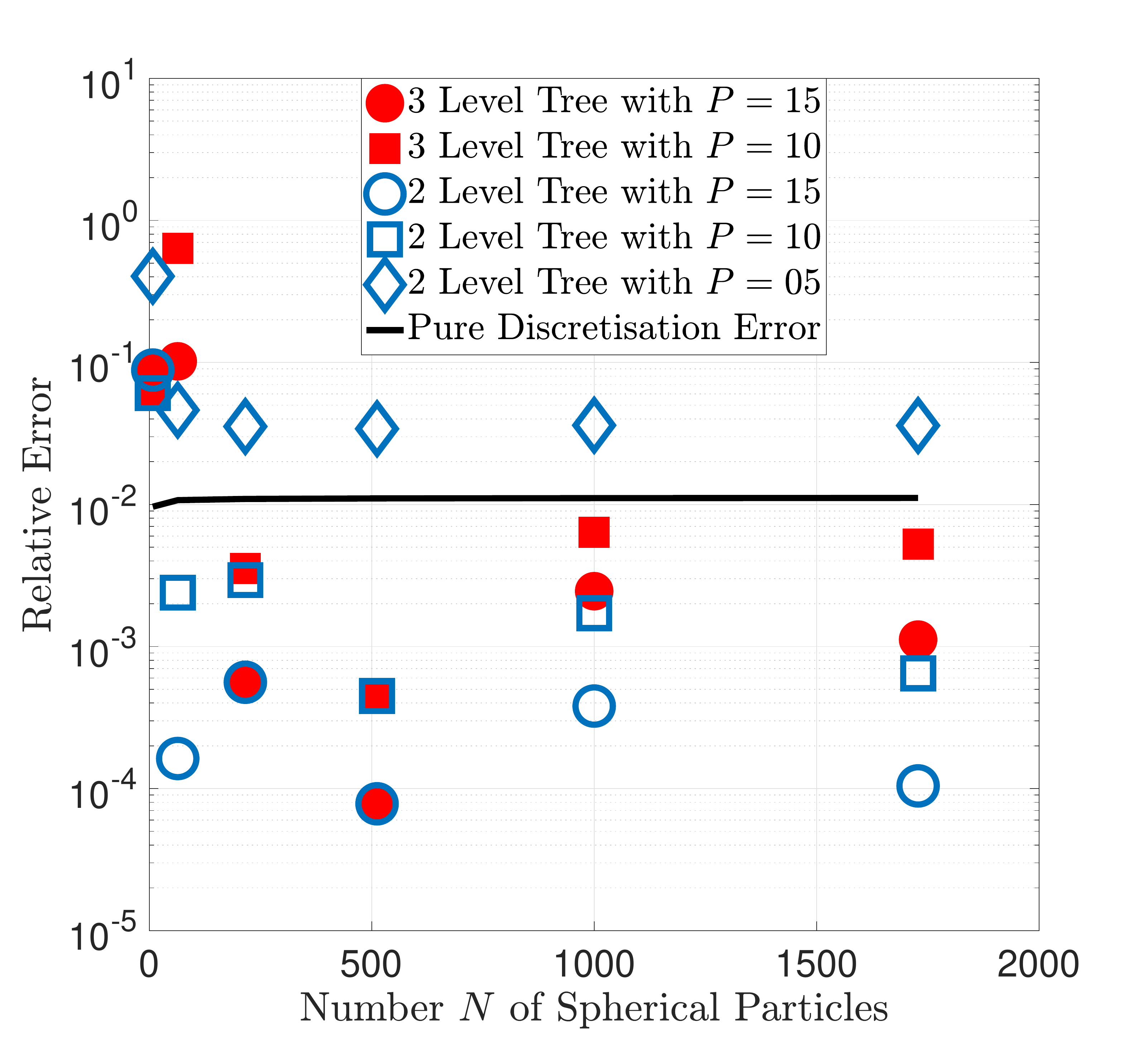} 
				\caption{Relative FMM and discretisation errors as a function of the number $N$ of spherical particles in the case $\ell_{\max}=5$.}
				\label{fig:71}
			\end{subfigure}\hfill
			\begin{subfigure}[t]{0.48\textwidth}
				\centering
				\includegraphics[width=1\textwidth]{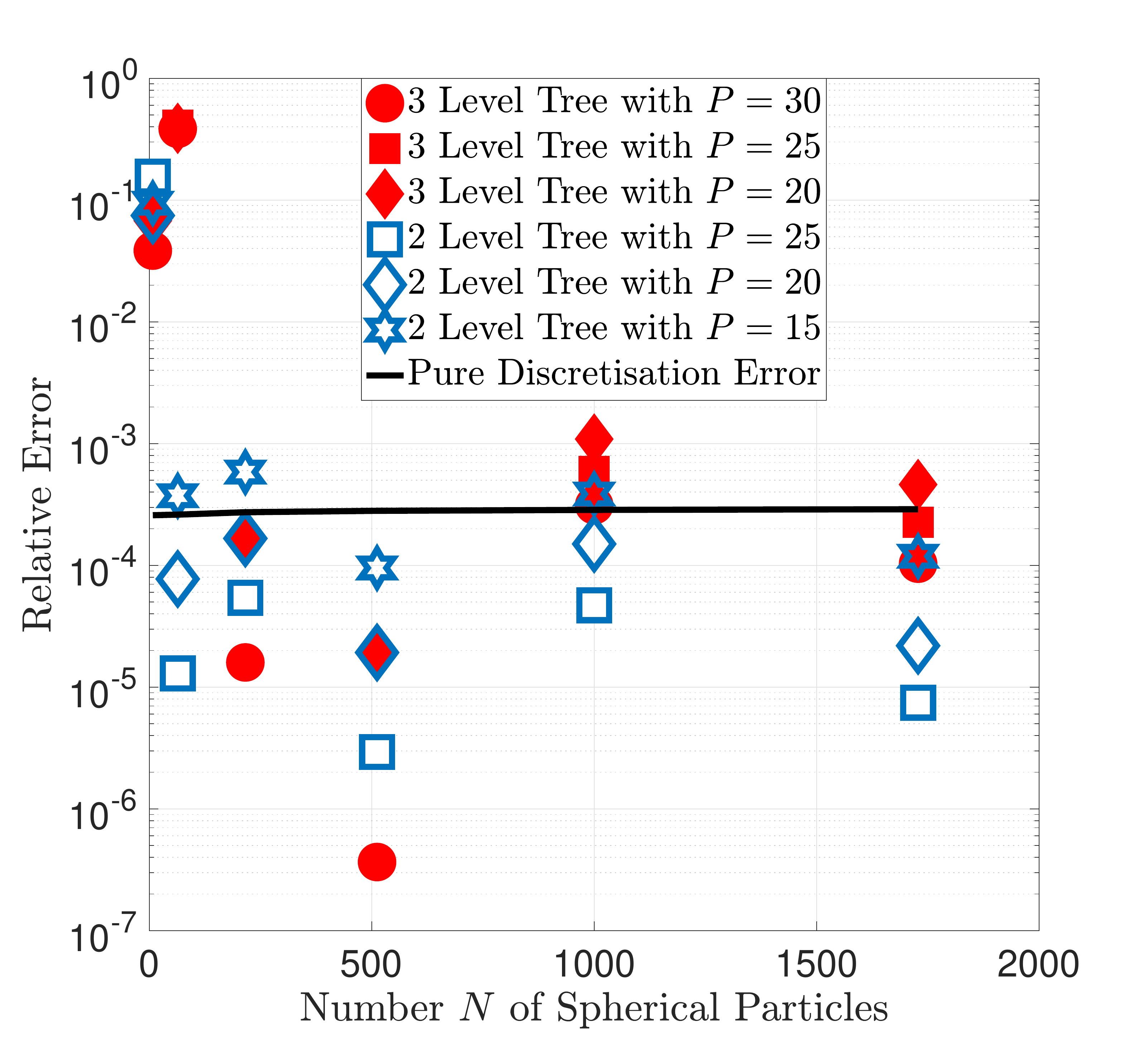} 
				\caption{Relative FMM and discretisation errors as a function of the number $N$ of spherical particles in the case $\ell_{\max}=10$.}
				\label{fig:72}
			\end{subfigure}
			\caption{FMM errors vs. discretisation errors for different choices of FMM parameters.}
		\end{figure}
		
		\begin{figure}[h]
			\centering
			\begin{subfigure}[t]{0.48\textwidth}
				\centering
				\includegraphics[width=\textwidth]{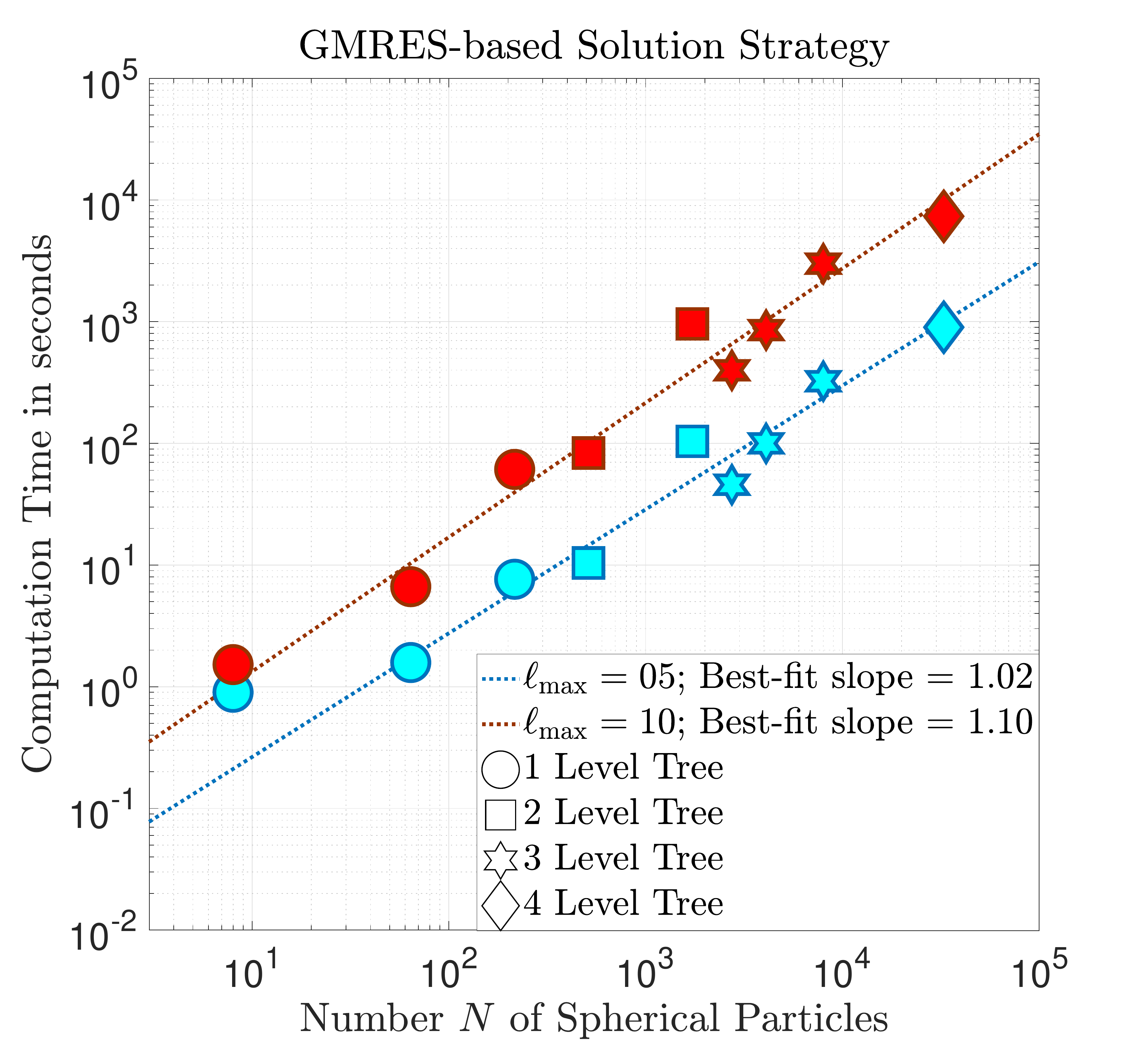} 
				\caption{Computation times for the GMRES-based solution strategy as a function of the number $N$ of spherical particles.}
				\label{fig:81}
			\end{subfigure}\hfill
			\begin{subfigure}[t]{0.48\textwidth}
				\centering
				\includegraphics[width=\textwidth]{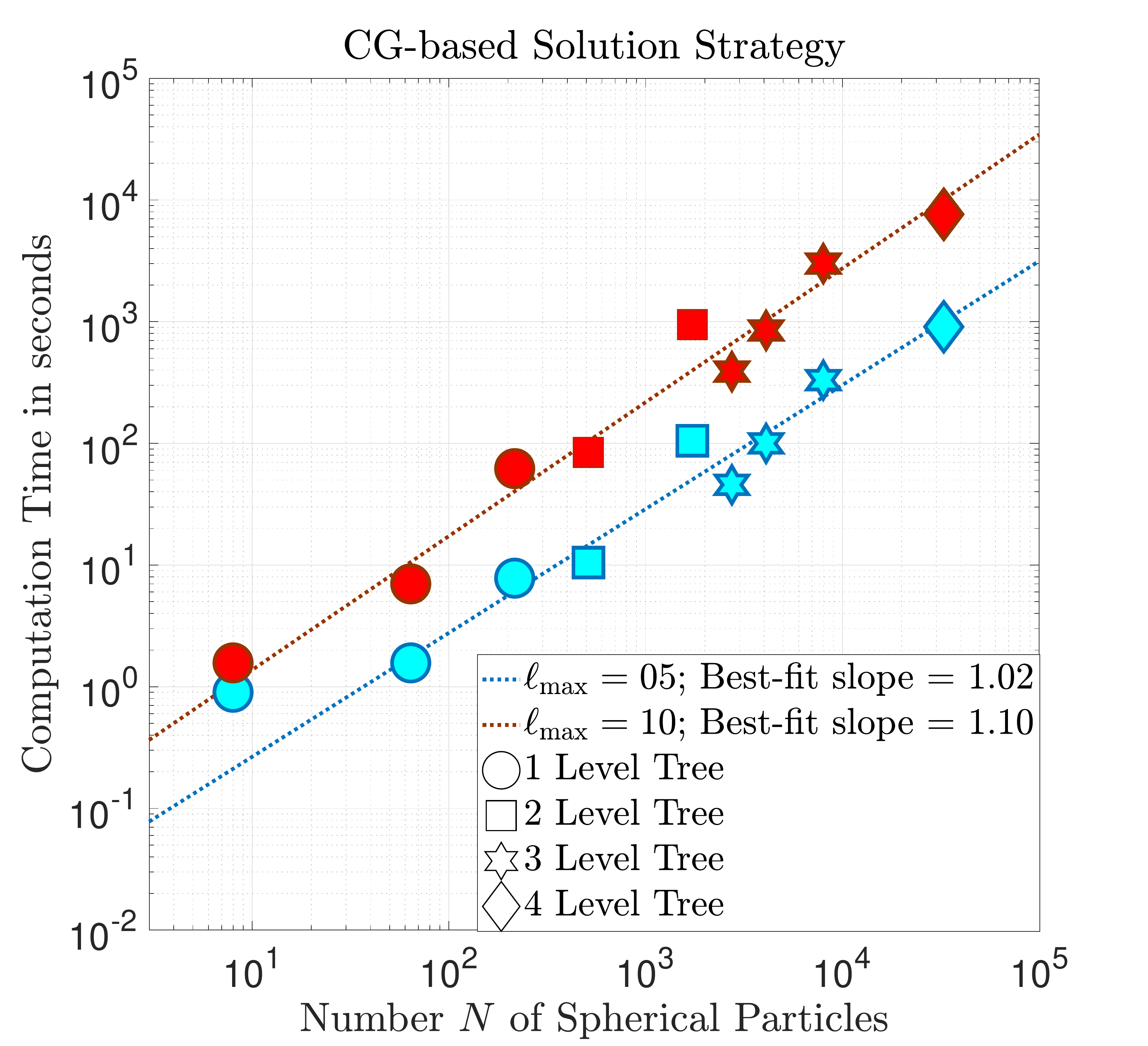} 
				\caption{Computation times for the CG-based solution strategy as a function of the number $N$ of spherical particles.}
				\label{fig:82}
			\end{subfigure}
			\caption{Computation Times for our Solution Strategies}
		\end{figure}

		\vspace{-3mm}
		
		Equipped with this methodology for picking the FMM parameters $P$ and $D$, we can now compute approximate solutions to the Galerkin discretisation $\eqref{eq:Galerkina}$ for the two cases $\ell_{\max}=5 \text{ and } \ell_{\max}=10$, and an increasing number $N$ of spherical particles. Our goal is to demonstrate numerically that both the GMRES-based and CG-based solution strategies are linear scaling in cost whilst having some reassurance that the FMM error does not dominate the discretisation error for this specific geometric setting.

		All numerical simulations were performed on a 2016 MacBook laptop with a 2.6 GHz Intel Core~i7 processor and 16GB of 2133 MHz LPDDR3 memory. Additionally, we set the linear solver tolerance to $10^{-6}$ and $10^{-9}$ in the case $\ell_{\max}=5$ and $\ell_{\max}=10$ respectively. Our results are displayed in Figures \ref{fig:81} and \ref{fig:82} and indicate excellent agreement with linear scaling behaviour. We see furthermore that the computation times for the CG and GMRES-based approaches are almost identical. Of course, if the geometric setting of the numerical simulations were to be changed, then one linear solver could potentially outperform the other in accordance with the numerical study presented in Section \ref{sec:4.1}.

		\section{Conclusion and Outlook} \label{sec:5}
		This article is the second in a series of two papers in which we present a detailed analysis of a boundary integral equation (BIE) of the second kind that describes the interaction of $N$ dielectric spherical particles undergoing mutual polarisation. The aim of these two articles was to perform a full scalability analysis of the Galerkin method used to solve this BIE and to establish that the method is \emph{linear scaling in accuracy}, i.e., in order to obtain an approximate solution with a fixed relative error, the computational cost of the algorithm scales as~$\mathcal{O}(N)$. In order to show that an $N$-body numerical method is linear scaling in accuracy, it is sufficient to show that it is (a) \emph{$N$-error stable}, i.e., for a fixed number of degrees of freedom per object the {relative} error does not increase with $N$ and (b) linear scaling in cost, i.e., for a fixed number of degrees of freedom per object, only~$\mathcal{O}(N)$ operations are needed to compute an approximate solution. Accordingly, the first article \cite{Hassan} presented the numerical analysis of the Galerkin method and showed that the numerical method is $N$-error stable by deriving $N$-independent convergence rates for the induced surface charge and total electrostatic~energy.
		
		The goal of the current article was to establish that the Galerkin method is also linear scaling in cost. To this end, we presented a convergence analysis of GMRES for the linear system arising from the Galerkin discretisation of the BIE and proved that under mild assumptions, there exists an upper bound-- independent of $N$-- for the number of GMRES iterations required to obtain an approximate solution up to a given relative error tolerance of the Galerkin discretisation. Combined with the use of the FMM to compute matrix-vector products in $\mathcal{O}(N)$, this result establishes that the numerical method is linear scaling in cost. In view of the main result of our earlier work \cite{Hassan}, we can conclude the numerical method is indeed linear scaling in accuracy. 
		
		In addition to our main result, we also demonstrated how to `symmetrise' the underlying linear system and subsequently proposed an equivalent, symmetric linear system that can be solved using the conjugate gradient method (CG) rather than GMRES. A convergence analysis of this alternative approach showed that there exists an upper bound-- also independent of $N$ and qualitatively similar to the GMRES bound-- for the number of CG iterations required to to obtain an approximate solution to the Galerkin discretisation up to a given tolerance.
		Finally, we presented a detailed numerical study with the goal of both supporting our theoretical results and exploring the dependence of the error on various system parameters. 
		
		As regards extensions of this work, we note that the results we have presented thus far involve the induced surface charge as the quantity of interest but many physical applications also require knowledge of the electrostatic forces between the dielectric particles. Similar errors estimates and a linear scaling solution strategy for the electrostatic forces are therefore the subject of a forthcoming article by the authors \cite{Hassan3}. Additionally, the layer potentials and boundary integral operators we have considered thus far are all generated by the Laplace operator. A promising direction of future research could be to explore if similar results on scalability and $N$-independent error estimates hold for boundary integral operators equations arising, for instance, in the study of wave propagation in non-homogenous media or electrostatic interactions between dielectric particles in an ionic solvent.

		\bibliographystyle{plain}
		\bibliography{refs.bib}

	\end{document}